\newtheorem{thm}{Theorem}[section]
\newtheorem{lem}[thm]{Lemma}
\newtheorem{prop}[thm]{Proposition}
\theoremstyle{definition}
\newtheorem{defn}[thm]{Definition}
\newtheorem{conj}{Conjecture}
\newtheorem{ex}[thm]{Examples}
\newtheorem{example}[thm]{Example}
\theoremstyle{remark}
\newtheorem{rem}[thm]{Remark}
\numberwithin{equation}{section}
\newcommand{\Z}{\mathbb Z}
\newcommand{\C}{\mathbb C}
\newcommand{\R}{\mathbb R}
\newcommand{\N}{\mathbb N}
\newcommand{\Pro}{\mathbb P}
\newcommand{\gr}{\mathrm{gr}}
\font \rus= wncyr10
\newcommand{\sha}{\, \hbox{\rus x} \,}
\newcommand{\MT}{\mathcal{MT}}
\newcommand{\rel}{\mathrm{rel}}
\newcommand{\ML}{\Lambda}
\newcommand{\Q}{\mathbb Q}
\newcommand{\To}{\longrightarrow}
\newcommand{\G}{\mathbb{G}}
\newcommand{\tone}{\overset{\rightarrow}{1}\!}
\newcommand{\SL}{\mathrm{SL}}
\newcommand{\Or}{\mathcal{O}}
\newcommand{\Mel}{\mathfrak{M}}
\newcommand{\mm}{\mathfrak{m} }
\newcommand{\EE}{\mathbf{E}}
\begin{document}
\author{Francis Brown}

\begin{title}[Multi-variable  $L$-functions]{A multi-variable version of   the completed Riemann zeta function and other   $L$-functions}\end{title}

\maketitle
\section{introduction}

This is the continuation of my talk at Professor Ihara's birthday conference, but for the most part, is   logically independent from it. It is  an attempt to find a general definition of multiple $L$-functions. The hope  is to obtain certain periods of algebraic varieties by combining the data of  the traces of Frobenius (or point counts over finite fields)  into an  analytic function of several complex variables. With this distant goal in mind, we define a  tentative class of multiple motivic $L$-functions which are meromorphic functions of several variables  satisfying a functional equation and multiplicative shuffle identities. 
In the simplest case of the trivial motive $\Q=H^0 (\mathrm{Spec}\, \Q)$, this yields a  multi-variable version $\xi(s_1,\ldots, s_r)$ of the Riemann $\xi$-function:
$$\xi(s) = \pi^{-s/2} \Gamma(s/2) \zeta(s)\ .$$
The general theory,   in  this particular case, yields the following theorem.
  \begin{thm} \label{thmRiemannxi} The function $\xi(s_1,\ldots, s_r)$ is  meromorphic  on  $\C^r$,  and satisfies  a functional equation:
  $$\xi(s_1,\ldots, s_r) = \xi(1-s_r,\ldots, 1-s_1) $$
and  shuffle  product identities:
$$
\xi( s_1, \ldots, s_p) \xi(  s_{p+1}, \ldots, s_{p+q})   =  \sum_{\sigma \in \Sigma_{p,q}}  \xi( s_{\sigma(1)}, \ldots, s_{\sigma(p+q)})\ \ .$$
It has simple poles along the hyperplanes 
$$ s_1+ \ldots +s_k = k \qquad  ,    \qquad s_k+\ldots+s_r=0  \quad \hbox{ for all } \quad 1\leq k \leq r \ , $$ 
and its residues have  the  recursive structure:
$$
\mathrm{Res}_{s_k+ \ldots + s_r=0} \, \xi(s_1,\ldots, s_r) = (-1)^{r-k+1}\, \frac{    \xi(s_1,\ldots, s_{k-1})}{   (s_{k+1} + \ldots + s_r)  \ldots (s_{r-1}+s_{r})s_r   }\  .
$$
\end{thm}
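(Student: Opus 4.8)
The plan is to work from the explicit iterated Mellin‑transform representation of $\xi(s_1,\dots,s_r)$ that the general theory provides. Write $\theta(t)=\sum_{n\ge 1}e^{-\pi n^{2}t}$, so that $\pi^{-s/2}\Gamma(s/2)\zeta(s)=\int_{0}^{\infty}\theta(t)\,t^{s/2}\,\tfrac{dt}{t}$ for $\Re(s)>1$, and keep in mind the modular relation $\theta(1/t)=\sqrt{t}\,\theta(t)+\tfrac12(\sqrt{t}-1)$. In these terms $\xi(s_1,\dots,s_r)$ is (a suitably regularised version of) the integral of the one‑forms $\omega_{s_i}(t)=\theta(t)\,t^{s_i/2}\,\tfrac{dt}{t}$ over the simplex $0<t_1<\dots<t_r<\infty$, absolutely convergent in the cone $\Re(s_1+\dots+s_k)>k$ $(1\le k\le r)$; I shall argue as though $\xi$ were literally this integral, the regularisation serving only to suppress the auxiliary polar hyperplanes that appear below. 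Granting this, the \emph{shuffle product identities} are the classical shuffle relations for iterated integrals: decomposing $(0,\infty)^{p+q}$ into the $\binom{p+q}{p}$ maximal chambers indexed by $\Sigma_{p,q}$ and integrating $\omega_{s_1}\cdots\omega_{s_{p+q}}$ over each gives the identity on the common cone of convergence, hence everywhere by the meromorphic continuation established below.

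For the \emph{functional equation} I would use the substitution $t_i\mapsto 1/t_{r+1-i}$, under which $\omega_{s_i}(t_i)$ becomes $\theta(1/t_{r+1-i})\,t_{r+1-i}^{-s_i/2}\,\tfrac{dt_{r+1-i}}{t_{r+1-i}}$ (the reversals of the limits of integration cancelling all signs). Substituting the modular relation into each of the $r$ factors expands the result into $2^{r}$ terms; the leading one, in which every factor contributes $\sqrt{t}\,\theta(t)$, equals $\xi(1-s_r,\dots,1-s_1)$ because $\sqrt{t}\,\theta(t)\,t^{-s/2}\,\tfrac{dt}{t}=\omega_{1-s}(t)$, while the remaining $2^{r}-1$ terms carry one of the elementary monomials $t^{\pm 1/2}$ in some of the variables and — once the continuation below is in hand — are seen to cancel among themselves. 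This yields $\xi(s_1,\dots,s_r)=\xi(1-s_r,\dots,1-s_1)$; for $r=1$ it is Riemann's argument, and in general it is what interchanges the two families of polar hyperplanes.

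The \emph{meromorphic continuation} and the \emph{location of the poles} I would obtain by a multivariable Riemann–Siegel argument, by induction on $r$. Cut the simplex along the hyperplanes $t_i=1$ into the $r+1$ chambers $C_j:\ 0<t_1<\dots<t_j<1<t_{j+1}<\dots<t_r<\infty$, so that $\int_{C_j}$ factors as $F_j(s_1,\dots,s_j)\,G_j(s_{j+1},\dots,s_r)$ with $G_j$ holomorphic on all of $\C^{\,r-j}$ by the exponential decay of $\theta$ at $+\infty$. On the remaining factor $F_j$ substitute $t_i\mapsto 1/t_i$, reversing the order of the first $j$ variables, and expand each $\theta(1/t_i)=\sqrt{t_i}\,\theta(t_i)+\tfrac12(\sqrt{t_i}-1)$: one obtains a finite sum of integrals in which every inner variable carries either a rapidly decaying factor $\theta$ or one of the elementary monomials $t_i^{\pm 1/2}$. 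Integrating out the elementary variables over the resulting nested intervals is the telescoping identity
\[
\int_{1<x_1<\dots<x_m<\infty}\ \prod_{i=1}^{m}x_i^{\beta_i-1}\,dx_i
=\frac{(-1)^{m}}{\beta_m\,(\beta_{m-1}+\beta_m)\cdots(\beta_1+\dots+\beta_m)}\, ,
\]
which produces rational functions whose poles lie on hyperplanes of the form $s_a+\dots+s_b=c$; the surviving genuine integrals are instances of the same construction in fewer variables and are handled by the inductive hypothesis. Reassembling the $C_j$, one checks that every polar hyperplane other than $s_1+\dots+s_k=k$ and $s_k+\dots+s_r=0$ cancels and that the survivors are simple. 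I expect this cancellation of spurious poles — carried out uniformly in $j$ and compatibly with the induction — to be the main technical obstacle; it is exactly what pins down the correct regularisation.

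Finally, the \emph{residues}. Only $C_r$ (all $t_i<1$) can contribute a pole along $s_k+\dots+s_r=0$, since for $j<r$ the factor $F_j$ does not involve $s_r$. Inside $C_r$ that pole is produced by the single term in which the $r-k+1$ variables carrying $s_k,\dots,s_r$ each contribute the constant $-\tfrac12$; the telescoping identity evaluates this block to
\[
\int_{1<\tau_k<\dots<\tau_r<\infty}\ \prod_{i=k}^{r}\bigl(-\tfrac12\bigr)\tau_i^{-s_i/2}\,\frac{d\tau_i}{\tau_i}
=\frac{(-1)^{r-k+1}}{s_r\,(s_{r-1}+s_r)\cdots(s_k+s_{k+1}+\dots+s_r)}\, ,
\]
and extracting $\Res_{s_k+\dots+s_r=0}$ deletes the last denominator, leaving $(-1)^{r-k+1}\big/\bigl((s_{k+1}+\dots+s_r)\cdots(s_{r-1}+s_r)\,s_r\bigr)$. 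The complementary block, in the variables carrying $s_1,\dots,s_{k-1}$, reassembles $\xi(s_1,\dots,s_{k-1})$ — here one uses that the contributions of the relevant chambers combine to the full continued function in fewer variables rather than to a single chamber of it, which is where the induction closes — and one reads off the stated formula along $s_k+\dots+s_r=0$. The residues along $s_1+\dots+s_k=k$ then follow by applying the functional equation of the second step.
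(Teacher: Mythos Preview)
Your Riemann--Siegel decomposition into the chambers $C_j$ along $t_i=1$, followed by inversion $t\mapsto 1/t$ on the inner block and telescoping of the polynomial pieces, is exactly the paper's route to meromorphic continuation. The paper packages the same computation as
\[
\xi(s_1,\ldots,s_r)=\sum_{k=0}^{r} R(\theta_{\Q},\ldots,\theta_{\Q};1-s_k,\ldots,1-s_1)\,R(\theta_{\Q},\ldots,\theta_{\Q};s_{k+1},\ldots,s_r),
\]
where each $R$ is a regularised iterated integral from $1$ to a tangential base point $\vec 1_\infty$; it then shows that $R$ equals a convergent integral on $[1,\infty)$ plus an explicit rational function in the $s_i$ produced by integrating the polynomial tails $\theta^\infty$. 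Your shuffle argument and your residue computation are likewise in line with the paper's.

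Where you diverge is the functional equation, and this is where there is a genuine gap. You propose substituting $t_i\mapsto 1/t_{r+1-i}$ in the full integral, expanding each factor via the modular relation $\theta(1/t)=\sqrt{t}\,\theta(t)+\tfrac12(\sqrt t-1)$, and asserting that the $2^r-1$ cross terms ``are seen to cancel among themselves''. Taken literally those cross terms are divergent (powers of $t^{\pm 1/2}$ integrated to $0$ or $\infty$), and making sense of them requires precisely the regularisation you set aside at the start; once regularised they do \emph{not} cancel pairwise but instead reassemble into the rational corrections that distinguish $\xi$ from the naive iterated integral of $\theta^0$. Put differently, the identity you need is that the regularised integral $\int_0^{\vec 1_\infty}$ transforms under $t\mapsto 1/t$ into $\int_{\vec 1_0}^{\infty}$ with the arguments reflected, and that the two tangential regularisations (at $0$ and at $\infty$) agree---this is a property of the regularisation, not something visible from the expansion. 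The paper bypasses the issue entirely: the displayed sum above is manifestly invariant under $k\mapsto r-k$ together with $s_i\mapsto 1-s_{r+1-i}$, so the functional equation is immediate from the decomposition, with no expansion and no cancellation to check. The same mechanism absorbs your acknowledged ``main technical obstacle'' (the cancellation of spurious polar hyperplanes) into the definition of the tangential-base-point regularisation rather than leaving it to be verified chamber by chamber.
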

The functions $\xi(s_1,\ldots, s_r)$ are not obviously related  to multiple zeta functions, for which no functional equation is  presently known to exist. Furthermore,  the double $\xi$-values $\xi(2\ell_1, 2\ell_2)$  for even integers $2\ell_1,2\ell_2>0$ are related to periods of simple extensions of  symmetric powers of the cohomology of the elliptic curve  $\C/ \Z \oplus \Z i$ which has complex multiplication. 

 We provide  some evidence in support of the philosophy outlined above by   proving that all periods of mixed Tate motives over $\Z$ can be expressed as totally critical values of some simple multiple motivic $L$-functions, and furthermore  by showing  that totally holomorphic multiple modular values (periods of the relative completions of modular groups) can  also be subsumed into this framework.  It therefore seems,  at least in some simple cases, that  values of multiple $L$-functions can indeed predict the periods of mixed motives which are beyond the reach of existing conjectures on special values of  ordinary   $L$-functions.

\section{$L$-functions and Mellin transforms} \label{sectLandMellin}
We briefly and informally recall the main properties of motivic $L$-functions and their associated theta functions \cite{Dokchister}. See 
 \cite{DeL, Nekovar} for further details.   We shall use the word `motive' loosely, as is customary in this area, since the $L$-function of a motive depends only upon its realisations, and its properties are largely conjectural.  In any case, most of our examples concern situations where the $L$-function is completely classical. 

\subsection{Motivic $L$-functions} 
 To a pure motive $M$ over $\Q$ of weight $m\geq 0$,  one attaches
\begin{itemize}
\item a Dirichlet series, defined as an Euler product
$$L(M;s) = \prod_{p \,\,\mathrm{prime}} L_p(M;s)  = \sum_{n\geq 1} \frac{a_n}{n^s}\ ,$$
which is assumed to converge for $\mathrm{Re}(s)$ sufficiently large.
\vspace{0.1in}
\item a completed $L$-function, defined by \cite{SerreGamma} 
$$L^*(M;s) = L_{\infty}(M;s) L(M;s)\ , $$
where $L_{\infty}(M;s)$ is a finite  product of factors $\Gamma_{\R/\C}(s-n)$ where
 $n$ is an integer, and 
$\Gamma_{\R}(s) = \pi^{-s/2} \Gamma(s/2)$, $\Gamma_{\C}(s) = (2\pi)^{-s} \Gamma(s)$.
\vspace{0.1in}
\item  One hopes that  $L^*(M;s)$ admits a meromorphic continuation to the complex plane and satisfies a functional equation of the form 
$$L^*(M;s )  =    \epsilon(M;s)\, L^*(M^{\vee}(1); -s)\  , $$
for $\epsilon(M;s)$  of the form $a   N^s$, where $N>0$ is an integer.
\end{itemize} 
From now on, we shall   restrict to the case when $M$ is self-dual, i.e., $M^{\vee} = M(m)$, for then  the expected functional equation reduces to:
$$L^*(M;s) = \epsilon(M;s) L^*(M; m+1-s)\ .  $$
This restriction is by no means necessary, but simplifies the exposition.

\begin{example}
Let  $M= H^m(X;\Q)$ where $X$ is a  smooth projective variety over $\Q$. Then
$$L_p(M;s) = \det \left( 1- F_p p^{-s} \big| M_{\ell}^{I_p}\right)^{-1}$$
for some prime $\ell \neq p$,  where $M_{\ell}= H^m_{et}(X \otimes_{\Q} \overline{\Q}; \Q_{\ell})$, $F_p$ is the geometric Frobenius, and $I_p \leq \mathrm{Gal}(\overline{\Q}/\Q)$  the inertia subgroup.   It is assumed to be independent of   $\ell$ at primes of bad reduction. By Deligne, $L(M;s)$  converges for   $\mathrm{Re}(s)> 1+m/2$. Serre \cite{SerreGamma} (15) defines $L_{\infty}(M;s)$ in terms of the real Hodge structure of $M$. Note that the motive  $M$ is self-dual.
\end{example} 
One can  consider motives over more general   number fields, but 
by restriction of scalars, one can always reduce to the field of rationals $\Q$. 

Recall the duplication formula:
\begin{equation} \label{Duplication}
2 \, \Gamma_{\C}(s) = \Gamma_{\R}(s) \Gamma_{\R}(s+1) \ .
\end{equation} 
The values of $\Gamma_{\R/\C}(s)$ at positive integers are integer powers of $\pi$ times a   rational number.

\subsection{Reformulation} Let us fix a self-dual motive $M$. It is convenient  to simplify the functional equation  by   rescaling  the completed $L$-function  as follows:
$$\Lambda(M;s)  = (\sqrt{N})^{-s}  L^*(M;s)$$  where $\sqrt{N}$ is the positive root of $N$. Set $\varepsilon_M = a (\sqrt{N})^{m+1}$. 
 Following  the notations of  \cite{Dokchister}, one can write this function in the form
$$\Lambda(M;s)  = A^s  \gamma(s) L(M;s) $$
where  the product of   gamma factors occuring in  $L_{\infty}(M;s)$ is denoted by
$\gamma(s) = \prod_{i=1}^d  \Gamma \left( \frac{s+ \lambda_i}{2} \right)$
for some integers $\lambda_i \in \Z$ which only depend on the Hodge numbers of $M$ and the action of the real Frobenius (complex conjugation) on its Betti realisation. The integer $d$ is equal to the rank of $M$.
The number  $A$ is positive and  real, and  absorbs  both the powers of $\pi$ and the exponential   factors occuring in $\epsilon(M;s)$.    
This partitioning into $A^s$ and $\gamma(s)$ is arbitrary; for example,  one could  demand that $\gamma$ be a product of functions $\Gamma_{\R}(s)$ without substantively affecting the following discussion. 

  In order to define the  theta function   \cite{Dokchister}  of $M$  one only needs the fact that $L(M;s)$  converges for $\mathrm{Re}(s)$ sufficiently large, together with the following assumptions:   \begin{itemize}
  \item That  $\Lambda(M;s)$ 
   admits a meromorphic continuation to $\C$  and is bounded in vertical strips. It has  finitely many poles, all of which are simple. 
  \vspace{0.1in}
  \item That, for some $\varepsilon_M \in \C$,  necessarily $\pm 1$,   it admits   a functional equation 
   \begin{equation} \label{LambdaMfunctional} \Lambda(M;s) = \varepsilon_M \,\Lambda(M; m+1-s) \ .\end{equation}
   This  equation is  equivalent to the functional equation for $L^*(M;s)$. 

\end{itemize} 
The Euler product will play very little  role. We shall also assume that the poles of  $\Lambda(M;s)$ are integers. This requirement  is not essential and can easily be  relaxed. 

\subsection{Theta functions} \label{sectThetadef}
 In \cite{Dokchister} Dokchister defines a continuous function $\phi(t)$ on the positive real axis to be the inverse Mellin transform of $\gamma(s)$, i.e., 
$$\gamma(s) = \int_0^{\infty} \phi(t) t^{s} \frac{dt}{t}  \  .$$
It depends only on the  $\lambda_i \in \Z$ and  tends to zero exponentially fast as $t\rightarrow \infty$. It can be expressed in terms of hypergeometric functions and can be computed once and for all for any given class of motives. 
 He then defines the associated theta function 
 $$\theta_M(t) = \theta_M^{\infty} (t) + \theta^0_M(t)$$
 where $\theta^{\infty}_M(t) \in \C[t]$ is a polynomial in $t$  and 
 \begin{equation} \theta^0_M(t) = \sum_{n\geq 1}   a_n \phi\left( \frac{nt}{A}\right) 
 \end{equation}
is a generalised theta function which converges exponentially fast as $t\rightarrow \infty$. The  $a_n$ are the coefficients in the Dirichlet series $L(M;s)$ and were assumed to have polynomial growth.  The completed $L$-function is  then the Mellin transform of $\theta^0_M(t)$:
$$\Lambda(M;s) = \int_0^{\infty}  \theta^0_M(t)   t^{s} \frac{dt}{t} \ .$$
The  polynomial  $\theta_M^{\infty}(t)$ is determined from the poles of $\Lambda(M;s)$ and its residues. In particular, it vanishes whenever $\Lambda(M;s)$ has no poles. That it is a polynomial is equivalent to the fact that $\Lambda(M;s)$ has poles only at integer points:
 were $\Lambda(M;s)$ to have  poles in $\Z[1/n]$ for some $n$, the function $\theta_M^{\infty}$ would need to be replaced with  a polynomial in $t^{1/n}$.  
 
 The functional equation of $\Lambda(M;s)$ is then equivalent to the inversion formula
\begin{equation} \theta_M(t^{-1} )  = \varepsilon_M\, t^{m+1} \theta_M(t)\ .
\end{equation}
Since $m\geq 0$,  one checks that this equation uniquely determines  $\theta_M^{\infty}(t)$ from $\theta^0_M(t)$.

\begin{example} (Riemann zeta function). Let $X=\mathrm{Spec} \, \Q$ be a point. Then  $M =H^0(X)=\Q$ is  the trivial motive. Its $L$-function is the Riemann zeta function
$$\zeta(s) = \prod_{p\,  \mathrm{prime}}  (1- p^{-s})^{-1} = \sum_{n\geq 1 }  \frac{1}{n^s}\ ,$$
 for all $\mathrm{Re} (s)>1$. Its completed version $\xi(s)  = \pi^{-s/2} \Gamma(s/2) \zeta(s)$
admits a meromorphic continuation to $\C$ with  simple poles at $s=0,1$ and satisfies 
$\xi(s) = \xi(1-s)$. Let 
\begin{equation} \theta_{\Q}(t) =   \sum_{n\in \Z} e^{- \pi n^2 t^2} 
\end{equation}
denote its inverse Mellin transform. It is  essentially the restriction of the Jacobi theta function to the imaginary axis.  It can be written as a sum $ \theta_{\Q}=
 \theta^{\infty}_{\Q}+ \theta^0_{\Q}$, where
$$\theta_{\Q}^{\infty}(t) =1 \qquad \hbox{ and } \qquad  \theta_{\Q}^{0}(t) =2 \sum_{n\geq 1} e^{- \pi n^2 t^2} \ .$$
Since
$$ \pi^{-s/2} \Gamma(s/2)  = 2 \int_0^{\infty}   e^{- \pi t^2}  t^{s} \frac{dt}{t}\ ,$$
we deduce  that for all $\mathrm{Re}(s)>1$: 
$$\xi(s) =  \int_0^{\infty}  \left(\theta_{\Q}(t) -1 \right) t^{s} \frac{dt}{t} \ .$$
The functional equation of $\xi$ is equivalent to 
$\theta_{\Q}(t^{-1})  = t \theta_{\Q}(t)$. 
The formula for $\xi(s)$ looks strange at first sight: 
one integrates the truncated function $\theta^{\infty}_{\Q}(t) = \theta_{\Q}(t)-1$, although it   is  $\theta_{\Q}$ which satisfies the   inversion formula. Using   tangential base points, we 
 shall interpret $\xi$ as a  regularised Mellin transform of  the full function $\theta_{\Q}$, which will make the functional equation obvious. 
\end{example}

\begin{example} (Cusp forms of level 1). Let 
$f(\tau)=\sum_{n\geq 1} a_n e^{2\pi i n \tau}$ be a cusp form of weight $2k$ for the full modular group $\mathrm{SL}_2(\Z)$, and an eigenfunction for Hecke operators with $a_1=1$. 
 Scholl \cite{Scholl} has shown how to associate a pure motive $M_f$ to $f$, which has coefficients in the field $K_f$ generated by the $a_n$.    It has weight $2k-1$ and is of Hodge type $(2k-1,0)$ and $(0,2k-1)$. 
 The associated $L$-function is
$$L(f; s) =  \prod_{p}  \left(1- a_p p^{-s} +  p^{2k-1-2s}\right)^{-1}  =  \sum_{n\geq 1} \frac{a_n}{n^s}   \ , $$
and converges for $\mathrm{Re}(s)> k+1$. 
The completed $L$-function,  defined by Hecke, is 
$$\Lambda(f; s) = (2\pi)^{-s} \Gamma(s)  L(f; s)\ .$$
It extends to an entire function on $\C$ satisfying  
$\Lambda(f;s) = (-1)^k \Lambda(f;2k-s)$.
Its inverse Mellin transform is the restriction of $f$ to the positive imaginary axis:
$$\theta_{f}(t) = \theta^0_{f}(t) = \sum_{n \geq 1} a_n e^{-2 \pi t}\ . $$
Here we have  $\theta^{\infty}_{f}(t)=0$. The inversion formula is  $\theta_{f}(t^{-1}) = (-1)^k t^{2k} \theta_{f}(t)$. One has 
 $$\Lambda(f;s) = \int_0^{\infty} \theta_{f}(t)  t^{s} \frac{dt}{t} \ . $$
\end{example}
 
\subsection{Conjectures on special values of $L$-functions}  \label{sectConjL} This  is a vast subject  originating from   Euler's 
formula for $\zeta(2n)$, and is based on a huge  range of  examples which have been gathered over the intervening two and a half centuries. We shall be extremely brief  and deliberately vague, and refer instead to \cite{Nekovar} for a recent survey.

A key definition \cite{DeL} is that of a critical point. For $M$ as above, Deligne defines an integer $n$ to be critical if neither $L_{\infty}(M;s)$  
nor $L_{\infty}(M^{\vee}(1);s)$ has a pole at $s=n$.

\begin{itemize}
\item For \emph{critical} $n$, Deligne's conjecture predicts that $L(M;n)$ should be related to a period of the  motive $M$ (or rather, Tate twists of its dual).
\item  For \emph{non-critical} $n$, Beilinson's conjecture \cite{Beilinson} predicts in most cases that $L(M;n)$ should be related to periods not of (Tate twists of) $M$.
 \end{itemize} 
For certain exceptional values of $s$, Beilinson's conjecture relates $L(M;s)$ to a  biextension of $\Q,\Q(1)$ and $M(n)$, but  this case will not play any further role in this write-up. 

In summary, these conjectures and their generalisations provide an interpretation for 
the special  values of $L(M;s)$ at  integers as periods of mixed motives of a very simple kind. 
Viewed upside down, these conjectures give  a  \emph{formula} for certain periods of pure motives (in the case of Deligne's conjecture), and
for certain periods of simple extensions (in the case of Beilinson's conjecture) in terms of $L$-values.

\subsection{Speculation} 

It is tempting to wonder if  this might be part of a larger picture relating periods of more general mixed motives and values of `mixed $L$-functions':
$$ \{\hbox{Periods of mixed motives} \}   \quad \overset{?}{\longleftrightarrow} \quad \{ \hbox{Mixed L-values}\}$$
Such a formalism would   interpret certain periods of an iterated extension  of pure motives as values  of analytic functions constructed out of the action  of  the Frobenius operators on the $\ell$-adic realisations of its constituent  motives.

\section{Iterated  Mellin transforms}  \label{sectIteratedMellin}
Consider a set of  functions $\theta_1,\ldots, \theta_r$ which are continuous on the positive real axis, and have the following properties:

\begin{enumerate}
\item  The existence of a functional equation for all $i$:
$$\theta_i (t^{-1}) = \varepsilon_i t^{w_i} \theta_{i}(t)\ \ .$$
\item The existence of a decomposition of the form:
$$\theta_i = \theta_i^{\infty} + \theta_i^0   \  , $$
for all $i$, where $\theta_i^{\infty} \in \C[t]$ and $\theta_i^{0}$ tends to  zero exponentially fast  as $t\rightarrow \infty$. 
\end{enumerate} 
These conditions are satisfied for the inverse Mellin transform of a motivic $L$-function which satisfies the assumptions detailed in the previous paragraph, and   has at most simple poles at integers.  We shall enlarge the class of $\theta$ functions that we wish to consider in \S \ref{sectNewTheta}. 

 We presently explain how to define a multiple  $\ML$-function:
$$\ML(\theta_1,\ldots, \theta_r; s_1,\ldots, s_r)$$
and prove its basic properties, which are summarised in the following theorem.

\begin{thm}  \label{thmLambdaMainProperties} The functions $\Lambda(\theta_1,\ldots, \theta_r; s_1,\ldots, s_r)$ are meromorphic on $\C^r$ with at most simple poles along  hyperplanes which depend only  on the polynomials  $\theta_i^{\infty}$.  They have no poles when  the  $\theta_i^{\infty}$ vanish for all $1\leq i \leq r$.
They satisfy a functional equation:
$$\ML(\theta_1,\ldots, \theta_r; s_1,\ldots, s_r) =   \varepsilon_1\ldots \varepsilon_r \,  \ML(\theta_r,\ldots, \theta_1; w_r-s_r, \ldots, w_1-s_1)$$
(note the reversed order of the arguments) 
and shuffle product formula
\begin{multline} \ML(\theta_1,\ldots, \theta_p;  s_1, \ldots, s_p) \ML(\theta_{p+1},\ldots, \theta_{p+q};  s_{p+1}, \ldots, s_{p+q})  \nonumber \\ 
=  \sum_{\sigma \in \Sigma_{p,q}}  \ML(\theta_{\sigma(1)}, \ldots, \theta_{\sigma(p+q)}, s_{\sigma(1)}, \ldots, s_{\sigma(p+q)})
\end{multline}
where $\Sigma_{p,q}$ denotes the set of $(p,q)$-shuffles.
\end{thm}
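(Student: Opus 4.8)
The plan is to \emph{define} $\ML(\theta_1,\ldots,\theta_r;s_1,\ldots,s_r)$ as the regularised iterated Mellin transform
$$\ML(\theta_1,\ldots,\theta_r;s_1,\ldots,s_r)\ =\ \int_{0<t_1<\cdots<t_r<\infty}\ \prod_{i=1}^r \theta_i(t_i)\, t_i^{s_i}\,\frac{dt_i}{t_i}\,,$$
regularised so that for $r=1$ it recovers the regularised Mellin transform of the full function $\theta$, and so that the shuffle identity becomes nothing but Chen's shuffle product for iterated integrals along the path $(0,\infty)$. The first point is that, in general, this integral has \emph{no} region of absolute convergence: inspecting the corners where $t_1,\ldots,t_k\to 0$ (resp.\ $t_k,\ldots,t_r\to\infty$) and using the functional equations to read off the growth of $\theta_i$ near $0$, one finds that convergence would force $\Real(s_1+\cdots+s_r)$ to be simultaneously too large and too small (already for $r$ copies of $\theta_\Q$, since there $w_i=1>0$). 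A regularisation must therefore be built into the definition, and I would do this by cutting the simplex at the hyperplanes $t_i=1$: since $t_1<\cdots<t_r$, it is the disjoint union, over $m=0,\ldots,r$, of the product regions $\{0<t_1<\cdots<t_m<1\}\times\{1<t_{m+1}<\cdots<t_r<\infty\}$. On the first factor I substitute $u_i=1/t_i$ and apply the functional equations $\theta_i(u^{-1})=\varepsilon_i u^{w_i}\theta_i(u)$, turning the $m$-th piece into $\varepsilon_1\cdots\varepsilon_m$ times a product of two iterated integrals over $(1,\infty)$: one of $\theta_{m+1},\ldots,\theta_r$ with exponents $s_{m+1},\ldots,s_r$, and one of $\theta_m,\ldots,\theta_1$ (reversed) with exponents $w_m-s_m,\ldots,w_1-s_1$. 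Finally, on $(1,\infty)$ I expand each $\theta_i=\theta_i^0+\theta_i^\infty$: every term in which all factors are the rapidly–decreasing $\theta_i^0$ converges absolutely and locally uniformly for \emph{all} $(s_1,\ldots,s_r)\in\C^r$, whereas any term containing a monomial $t_i^k$ from some $\theta_i^\infty$ is evaluated by performing the elementary integrations $\int_c^\infty t^a\,\tfrac{dt}{t}=-c^a/a$ one variable at a time, each of which produces an explicit rational function of the $s_i$ times a convergent integral of the same shape. This pins down the definition and simultaneously proves the first assertion: $\ML$ is a finite sum of functions holomorphic on $\C^r$ and of explicit rational functions whose denominators are products of affine-linear forms $\sum_{i\in I}(\pm s_i)+c_I$ with $c_I\in\Z$ depending only on the $w_i$ and the $\theta_i^\infty$; hence $\ML$ is meromorphic on $\C^r$ with at most simple poles along those hyperplanes, and when every $\theta_i^\infty$ vanishes no monomial ever appears and $\ML$ is holomorphic on all of $\C^r$.

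For the functional equation I apply the involution $t_i\mapsto 1/t_i$ to all variables at once. It interchanges the pieces $m\leftrightarrow r-m$ of the decomposition above, reverses each chain of inequalities, and --- through the functional equations of the $\theta_i$ --- replaces the $i$-th form $\theta_i(t_i)\,t_i^{s_i}\,\tfrac{dt_i}{t_i}$ by $\varepsilon_i\,\theta_i(t_i)\,t_i^{\,w_i-s_i}\,\tfrac{dt_i}{t_i}$, the sign from $\tfrac{dt}{t}=-\tfrac{du}{u}$ being absorbed by the reversal of the chain exactly as in the classical computation $\int_0^\infty\theta(t)t^s\tfrac{dt}{t}=\varepsilon\int_0^\infty\theta(t)t^{w-s}\tfrac{dt}{t}$. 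Reading off which argument lands in which slot of the reversed simplex, the whole expression becomes $\varepsilon_1\cdots\varepsilon_r\,\ML(\theta_r,\ldots,\theta_1;w_r-s_r,\ldots,w_1-s_1)$. Since the involution literally permutes the explicitly written pieces of the definition, there is nothing to verify about its compatibility with the regularisation: the identity holds piece by piece, hence as meromorphic functions on $\C^r$.

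For the shuffle product I use Chen's shuffle formula. By Fubini, the product $\ML(\theta_1,\ldots,\theta_p;s_1,\ldots,s_p)\,\ML(\theta_{p+1},\ldots,\theta_{p+q};s_{p+1},\ldots,s_{p+q})$ is an integral over the product of the two simplices $\{0<t_1<\cdots<t_p\}$ and $\{0<t_{p+1}<\cdots<t_{p+q}\}$, which decomposes up to measure zero into the simplices indexed by $\sigma\in\Sigma_{p,q}$ on which the relative orders inside each block are preserved, and on each such simplex the integrand is the product of all $p+q$ forms. To reconcile this with the regularisation one observes that cutting at $t=1$ is compatible with shuffling (a shuffle of two chains, restricted to the variables that are $<1$, is again a shuffle of the corresponding sub-chains, and likewise for $>1$). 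Hence $\ML_1\ML_2$ expands, piece by piece, into products of iterated integrals over $(1,\infty)$, to each of which Chen's formula applies: for the all-$\theta^0$ terms because they converge, and for the terms containing monomials because the resulting shuffle identities among the elementary integrals $\int_c^\infty t^a\tfrac{dt}{t}$ are identities of rational functions, valid in a region of convergence and hence everywhere. Reassembling the pieces yields exactly $\sum_{\sigma\in\Sigma_{p,q}}\ML(\theta_{\sigma(1)},\ldots,\theta_{\sigma(p+q)};s_{\sigma(1)},\ldots,s_{\sigma(p+q)})$.

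The individual manipulations above are classical; the real work lies in the regularisation itself. The main obstacle is to show that the recipe ``cut at $t=1$, flip the small part, expand into $\theta^0$ and $\theta^\infty$'' is independent of the cut-off and agrees with the honest tangential-base-point regularisation at $0$ and $\infty$ --- so that, for $r=1$, one indeed recovers $\Lambda(M;s)$ as in \S\ref{sectThetadef} --- and then to carry out the bookkeeping that makes the involution $t\mapsto 1/t$ and the shuffle decomposition permute its pieces. Of the three properties, I expect the compatibility of the functional equation with the regularisation, equivalently the statement that analytic continuation in the $s_i$ commutes with the involution, to demand the most care, even though the explicit-pieces description above reduces it essentially to careful indexing.
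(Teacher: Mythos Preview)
Your overall strategy coincides with the paper's: split the simplex at $t=1$, apply $t\mapsto t^{-1}$ and the inversion formulae on the $(0,1)$ part to convert everything into iterated integrals over $(1,\infty)$ with arguments reversed, and arrive at the decomposition
\[
\Lambda(\theta_1,\ldots,\theta_r;s_1,\ldots,s_r)=\sum_{k=0}^r\varepsilon_1\cdots\varepsilon_k\,R(\theta_k,\ldots,\theta_1;w_k{-}s_k,\ldots,w_1{-}s_1)\,R(\theta_{k+1},\ldots,\theta_r;s_{k+1},\ldots,s_r)
\]
(the paper's formula \eqref{LambdaasR}). The functional equation is then the manifest $k\leftrightarrow r-k$ symmetry, and the shuffle product follows from Chen's shuffle for the $R$'s together with the compatibility of the cut with shuffling --- exactly as you outline.

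Where you diverge from the paper is in how the building blocks $R$ are regularised at $\infty$. The paper imports from \cite{MMV}, \S4 a combinatorial operator $\mathcal{R}$ (a specific signed shuffle in the $\underline{\theta}_i$ and $\underline{\theta}_j^\infty$) engineered so that $\int_1^\infty\mathcal{R}(\underline{\theta}_1\cdots\underline{\theta}_k)$ converges \emph{absolutely for all} $s\in\C^r$, plus an explicit tangent-space correction $\int_0^1\underline{\theta}_r^\infty\cdots\underline{\theta}_{i+1}^\infty$ which is a rational function carrying all the poles; composition of paths, reversal, and shuffle for tangential-base-point integrals are then quoted wholesale. Your route --- expand each $\theta_i=\theta_i^0+\theta_i^\infty$ multilinearly and regularise term by term via analytic continuation in $s$ --- is legitimate (each mixed term has a non-empty half-space of convergence, found by integrating outermost-in and taking the relevant partial sums of $\Real(s_j)$ negative enough) and yields the same meromorphic function, but you must then re-derive shuffle and reversal on each region and continue, rather than inherit them from a formalism. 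What you buy is self-containment; what the paper buys is that the bookkeeping has already been done.

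Your closing paragraph slightly misidentifies the difficulty. If you are \emph{defining} $\Lambda$ by your recipe, nothing needs matching: independence of the cut-point is automatic (the integral between two cut-points is entire in $s$, so moving the cut shifts only holomorphic terms between pieces), and if you instead want the paper's tangential-base-point $\Lambda$, the matching is immediate once you note that both your $R$ and the paper's $R$ continue the same convergent integral from the same half-space. Your expectation that the functional equation is the delicate step is also off: in both approaches it is the \emph{easiest}, being literally the $k\leftrightarrow r-k$ symmetry of \eqref{LambdaasR}; the work, such as it is, lies entirely in setting up $R$ so that this decomposition makes sense for all $s$.
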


One can rescale this multi-variable $L$-function by  defining 
\begin{equation}  \label{LstarversusLambda}
L^*(\theta_1,\ldots, \theta_r; s_1,\ldots, s_r) = N_1^{s_1/2} \ldots N_r^{s_r/2} \, \Lambda(\theta_1,\ldots, \theta_r; s_1,\ldots s_r) \ ,
\end{equation} 
where $N_i$ is the exponential factor in $\epsilon(M_i;s)$ and  $M_i$ is the motive corresponding to  $\theta_i$.  
This  function has similar properties to $\ML$, but with  a slightly more complicated functional equation  (replace every $\varepsilon_i$  in the above with $\epsilon(M_i;s)$). For $r=1$ it reduces to the  definition of  \S\ref{sectLandMellin}.
 
We first consider the simpler  situation where all  $\theta_i^{\infty}$ vanish, in which case the previous theorem is an immediate consequence of the theory of iterated integrals. The main issue in the general case is  to regularise divergences  correctly. For this  we use  a modification (\cite{MMV}, \S4) of Deligne's theory   of tangential base points (\cite{DePi1}, \S15).

\subsection{Multiple Mellin transforms in the case with  no poles}   For $s_i \in \C$,  let us write
$$\underline{\theta}^{\bullet}_i (s_i)= \theta^{\bullet}_i(t) t^{s_i-1} dt  \quad \ , \quad \quad  \hbox{ for } \quad  \bullet  =  \emptyset, 0, \infty  $$
to denote the one-forms associated to the $\theta_i$.   For levity of notation, we shall sometimes simply write $\underline{\theta}^{\bullet}_i $ for $\underline{\theta}^{\bullet}_i (s_i)$. We first assume  all $\theta^{\infty}_i$  vanish. 

\begin{defn} Suppose that $\underline{\theta}_i^{\infty}=0$ for all $i = 1,\ldots, r$.
 Define
 \begin{eqnarray}  \nonumber 
\ML(\theta_1,\ldots, \theta_r; s_1,\ldots, s_r) & = & \int_0^{\infty}   \underline{\theta}_1(s_1)  \, \cdots   \, \underline{\theta}_{r}(s_r) \qquad \qquad \qquad \hbox{(Iterated integral)}   \\
&= &  \int_{0\leq t_1 \leq t_2 \leq \ldots \leq t_r \leq \infty}   \theta_{1}(t_1) t_1^{s_1-1}dt_1 \ldots \theta_{r}(t_r)  t_r^{s_r-1}dt_r \end{eqnarray} 
The integral converges for all  $s_i \in \C$. This follows from the   exponential decay  of the functions $\theta_i$ at infinity and at $0$, which follows  from the inversion formula.
   \end{defn}

\begin{prop} \label{prop: propertiesnopoles} The functions $\ML$   satisfy the formula:
\begin{multline} \label{LambdaasR}    \ML(\theta_1,\ldots, \theta_r; s_1,\ldots, s_r) = \\
 \sum_{k=0}^r   \varepsilon_1\ldots \varepsilon_k R(\theta_k,\ldots, \theta_1; w_k-s_k,\ldots,w_1- s_1)  R(\theta_{k+1}, \ldots, \theta_r; s_{k+1}, \ldots,  s_r) 
 \end{multline} 
where the functions $R$  are defined  by the iterated integrals 
\begin{eqnarray}  
R(\theta_1,\ldots, \theta_r; s_1,\ldots, s_r) &  = & \int_1^{\infty} \underline{\theta}_1 (s_1)\ldots \underline{\theta}_r (s_r)  \nonumber \\
 &  = &  \int_{1 \leq t_1\leq  \ldots \leq t_r\leq \infty}  \theta_{1}(t_1) t_1^{s_1-1}dt_1 \ldots \theta_{r}(t_r)  t_r^{s_r-1}dt_r \nonumber
\end{eqnarray} 
which converge, and are holomorphic, for all $s_1,\ldots, s_r \in \C$. If $r=0$ then $R$ is defined to be $1$.
In particular,  $\Lambda(\theta_1,\ldots, \theta_r;s_1,\ldots,s_r) $   is analytic on $\C^r$. The shuffle product formula and functional equations  stated in theorem \ref{thmLambdaMainProperties} hold.

\end{prop}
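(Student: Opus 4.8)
The plan is to establish \eqref{LambdaasR} first, since once we have expressed $\ML$ as a finite sum of products of the manifestly holomorphic integrals $R$, analyticity on $\C^r$ is immediate, and the functional equation and shuffle formula will follow by formal manipulations with iterated integrals. To prove \eqref{LambdaasR} I would apply the standard path-composition (Chen) formula for iterated integrals, splitting the path of integration $[0,\infty]$ at the point $1$. Writing $\int_0^\infty = \int_0^1 \cdot \int_1^\infty$ in the sense of the coproduct on iterated integrals, one gets
\begin{equation} \nonumber
\ML(\theta_1,\ldots,\theta_r;s_1,\ldots,s_r) = \sum_{k=0}^r \left( \int_0^1 \underline{\theta}_1(s_1)\cdots\underline{\theta}_k(s_k) \right) \left( \int_1^\infty \underline{\theta}_{k+1}(s_{k+1})\cdots\underline{\theta}_r(s_r) \right).
\end{equation}
The second factor is exactly $R(\theta_{k+1},\ldots,\theta_r;s_{k+1},\ldots,s_r)$. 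For the first factor I would substitute $t \mapsto t^{-1}$, which reverses the interval $[0,1]$ to $[1,\infty]$ and reverses the order of the forms in the iterated integral; under this substitution $\underline{\theta}_i(s_i) = \theta_i(t)t^{s_i-1}dt$ becomes, using the functional equation $\theta_i(t^{-1}) = \varepsilon_i t^{w_i}\theta_i(t)$ from hypothesis (1), the form $\varepsilon_i \theta_i(u) u^{w_i - s_i - 1}(-du)$ with $u = t^{-1}$; the sign from $du$ is absorbed by the orientation reversal of the simplex. Collecting the constants $\varepsilon_1\cdots\varepsilon_k$ and reading off the reversed order gives $\varepsilon_1\cdots\varepsilon_k\, R(\theta_k,\ldots,\theta_1;w_k-s_k,\ldots,w_1-s_1)$, which is \eqref{LambdaasR}.

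With \eqref{LambdaasR} in hand, analyticity follows because each $R$ is holomorphic on $\C^r$ — the integral $\int_1^\infty$ converges absolutely and locally uniformly in the $s_i$ thanks to the exponential decay of the $\theta_i$ at infinity (guaranteed by the inversion formula together with decay of $\theta_i^0$), so one may differentiate under the integral sign. The functional equation for $\ML$ I would deduce directly from \eqref{LambdaasR}: replacing $(\theta_1,\ldots,\theta_r;s_1,\ldots,s_r)$ by $(\theta_r,\ldots,\theta_1;w_r-s_r,\ldots,w_1-s_1)$ permutes the summands $k \leftrightarrow r-k$ and, after using $\varepsilon_i^2 = 1$, reproduces $\varepsilon_1\cdots\varepsilon_r$ times the original sum; this is a bookkeeping check on the indices. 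Alternatively, and more conceptually, the substitution $t_i \mapsto t_i^{-1}$ applied directly to the defining iterated integral $\int_0^\infty \underline{\theta}_1(s_1)\cdots\underline{\theta}_r(s_r)$ reverses the simplex $0\le t_1\le\cdots\le t_r\le\infty$ and, via hypothesis (1), yields the functional equation at once — this is the cleaner argument and I would present it that way.

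For the shuffle product formula I would invoke the standard fact that iterated integrals along a fixed path satisfy the shuffle relation: $\left(\int_\gamma \omega_1\cdots\omega_p\right)\left(\int_\gamma \omega_{p+1}\cdots\omega_{p+q}\right) = \sum_{\sigma\in\Sigma_{p,q}} \int_\gamma \omega_{\sigma^{-1}(1)}\cdots\omega_{\sigma^{-1}(p+q)}$, applied to $\gamma = [0,\infty]$ with $\omega_i = \underline{\theta}_i(s_i)$. Since the one-form $\underline{\theta}_i(s_i)$ carries its own label $i$ (and hence its own variable $s_i$), the shuffle of the forms is exactly the shuffle of the index set $\{1,\ldots,p+q\}$ stated in the theorem. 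The only genuine point requiring care is that the shuffle identity for iterated integrals is a priori a statement about \emph{absolutely convergent} integrals over the relevant simplices, so one must check that all the iterated integrals appearing converge absolutely for all $s_i\in\C$ — but this is precisely the content of the convergence remark in the Definition, again a consequence of the exponential decay at $0$ and $\infty$. I expect this convergence/absolute-convergence verification to be the main (minor) obstacle; everything else is formal algebra of iterated integrals.
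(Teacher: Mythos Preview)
Your proposal is correct and follows essentially the same route as the paper: split the iterated integral at $t=1$ via Chen's composition-of-paths formula, then on the $[0,1]$ piece apply $t\mapsto t^{-1}$ together with the inversion formula and path reversal to obtain \eqref{LambdaasR}; the functional equation and shuffle product then follow from the symmetry of \eqref{LambdaasR} and the standard shuffle relation for iterated integrals, respectively. The paper separates the substitution and the path-reversal into two explicit steps (writing $\underline{\tilde{\theta}}_i=\theta_i t^{w_i-s_i-1}dt$ and invoking $(-1)^k\int_\infty^1\underline{\tilde{\theta}}_1\cdots\underline{\tilde{\theta}}_k=\int_1^\infty\underline{\tilde{\theta}}_k\cdots\underline{\tilde{\theta}}_1$), but this is exactly what you do in one breath.
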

\begin{proof} For sufficiently large $\mathrm{Re}(s_i)$, apply the composition of paths   formula \cite{Ch}
$$ \int_{0}^{\infty} \underline{\theta}_1\ldots \underline{\theta}_r =  \sum_{k=0}^r \int_{0}^1 \underline{\theta}_1 \ldots \underline{\theta}_k \int_{1}^{\infty} \underline{\theta}_{k+1}\ldots \underline{\theta}_r$$
to the definition of $\Lambda$. 
Apply the change of variables $t\mapsto t^{-1}$ to the left-hand integrals from $0$ to $1$ on the right  of the equality sign  and invoke  the inversion formula for the  $\theta_{i}$.  
Now write   $\underline{\tilde{\theta}}_i=\theta_i t^{w_i-s_i-1} dt$ and apply the reversal of paths formula \cite{Ch}  
$$    (-1)^k \int_{\infty}^1 \underline{\tilde{\theta}}_1 \ldots \underline{\tilde{\theta}}_k  =    \int_1^{\infty} \underline{\tilde{\theta}}_k \ldots \underline{\tilde{\theta}}_1$$
 to obtain formula \eqref{LambdaasR}.  The functional equation follows immediately.
  The shuffle product formula follows from the  standard shuffle product for iterated integrals \cite{Ch}. \end{proof}

When  each $\theta_i= \theta_{f_i}$ is  associated to a  cusp form, the iterated Mellin transforms were previously considered by Manin in \cite{Ma1}.

\subsection{The case with simple poles}  \label{sectWithPoles}

\begin{defn} Let $\theta_1,\ldots, \theta_r$ be as in the beginning of the section.
Define the iterated  regularised Mellin transform to be the iterated integral 
 $$\ML(\theta_1,\ldots, \theta_r; s_1,\ldots, s_r) =  \int_{0}^{\tone_{\infty}} \underline{\theta}_1(s_1) \cdots \underline{\theta}_r(s_r)\ , $$
 where $\tone_{\infty}$ denotes a tangent vector of length $1$ at infinity\footnote{this notation was used in  \cite{MMV}, \S4 
with respect to a coordinate which is  $i$ times the coordinate used here: i.e., the definitions agree after   identifying $\R_{>0}$ with the imaginary axis $i \R_{>0}$  in the upper-half plane. In section \S\ref{sectMultipleXi}  we use both conventions - the meaning will be clear from the context. 
}. The definition is spelled out below. 
  The integral converges for $\mathrm{Re}(s_i)$ sufficiently large. 
\end{defn}
The iterated integral in this case
can be written using a regularisation operator $\mathcal{R}$
$$ \int_{0}^{\tone_{\infty}} \underline{\theta}_1(s_1) \cdots \underline{\theta}_r (s_r)= \int_0^{\infty}  \mathcal{R} \left(  \underline{\theta}_1(s_1) \cdots \underline{\theta}_r(s_r) \right) $$
which  was defined in \cite{MMV}, \S4.6.  The right-hand side of the previous formula is a linear combination of iterated integrals in the $\underline{\theta}_i^0$ and $\underline{\theta}_i^{\infty}$. 
Formally, the operator $\mathcal{R}$ satisfies   $$\mathcal{R}\left( \underline{\theta}_1 \ldots  \underline{\theta}_r  \right) = \sum_{i=1}^r (-1)^{r-i}   \left(( \underline{\theta}_1 \underline{\theta}_2 \ldots  \underline{\theta}_{i-1})  \sha (\underline{\theta}_r^{\infty} \underline{\theta}_{r-1}^{\infty} \ldots \underline{\theta}_{i+1}^{\infty}) \right) . \,   \underline{\theta}_i^{0} $$
where $\sha$ denotes the shuffle product  and $.$ denotes concatenation. 
To make sense of this, one should work in a tensor algebra of differential forms (\cite{MMV}, \S4.6).

\begin{example}
The regularisation operator satisfies 
\begin{eqnarray}\label{examplesofR}
\mathcal{R} ( \underline{\theta}_1 ) &= & \underline{\theta}_1^0  \\
\mathcal{R}( \underline{\theta}_1 \underline{\theta}_2 )  &= & \underline{\theta}_1 \underline{\theta}_2^0 - \underline{\theta}_2^{\infty} \underline{\theta}_1^{0} \nonumber\\ 
\mathcal{R}(\underline{\theta}_1 \underline{\theta}_2 \underline{\theta}_3)  
& = &  (\underline{\theta}_3^{\infty} \underline{\theta}_2^{\infty}) \underline{\theta}_1^0 - ( \underline{\theta}_1 \sha \underline{\theta}_3^{\infty} ) \underline{\theta}_2^0  + 
 (\underline{\theta}_1 \underline{\theta}_2 )\underline{\theta}_3^0
  \nonumber \\ 
&=  &\underline{\theta}_1 \underline{\theta}_2 \underline{\theta}_3^0 - \underline{\theta}_1 \underline{\theta}_3^{\infty} \underline{\theta}_2^0 - \underline{\theta}_3^{\infty} \underline{\theta}_1 \underline{\theta}_2^0 + \underline{\theta}_3^{\infty} \underline{\theta}_2^{\infty} \underline{\theta}_1^0  \nonumber
\end{eqnarray} 
One has the recursive formula (\cite{MMV}, proof of lemma 4.8) 
$$\mathcal{R}(\underline{\theta}_1 \underline{\theta}_2 \ldots \underline{\theta}_n)  =  \underline{\theta}_1 \mathcal{R}(  \underline{\theta}_2 \underline{\theta}_3 \ldots \underline{\theta}_n )  -  \underline{\theta}^{\infty}_n \mathcal{R}(  \underline{\theta}_1 \underline{\theta}_2\ldots \underline{\theta}_{n-1} )\ .$$
\end{example}

Proposition \ref{prop: propertiesnopoles} has the following variant in the case of poles.
\begin{thm}  The formula \eqref{LambdaasR} holds, where we now define
$$ R(\theta_1,\ldots, \theta_r; s_1,\ldots, s_r) = \int_1^{\tone_{\infty}} \underline{\theta}_1(s_1) \ldots \underline{\theta}_r(s_r)\   . $$ The right-hand side is  the  regularised iterated integral
\begin{equation}\label{dag} \int_1^{\tone_{\infty}} \underline{\theta}_1 \ldots \underline{\theta}_r  = \sum_{i=0}^r (-1)^{r-i}  \int_1^{\infty} \mathcal{R} (\underline{\theta}_1  \ldots \underline{\theta}_i ) \int_0^1 \underline{\theta}_r^{\infty}  \underline{\theta}_{r-1}^{\infty}\ldots \underline{\theta}_{i+1}^{\infty}\ .
\end{equation}
The  integrals  of $\mathcal{R} (\underline{\theta}_1 \ldots \underline{\theta}_i)$ from $1$ to $\infty$  in the right-hand side of this formula converge for all $s_i \in \C$ and are holomorphic. The iterated integrals
$$\int_0^1 \underline{\theta}_r^{\infty} \underline{\theta}_{r-1}^{\infty}\ldots \underline{\theta}_{i+1}^{\infty}$$
 can be interpreted  geometrically as integrals in the tangent space  at the point $\infty$ of Riemann sphere \cite{MMV}, \S4. 
They can be computed explicitly since the $\theta_i^{\infty}$ are polynomials in $t$. In particular, they 
define rational functions in the $s_i$
with simple poles along finitely many hyperplanes of the following type:
\begin{eqnarray} \label{Polehyperplanes}
  s_{i}  + \ldots +s_{r-1} + s_r & =  &   \alpha_{i}   
\end{eqnarray}
where $\alpha_i \in \C$. 
 It follows that $\Lambda(\theta_1,\ldots, \theta_r;s_1,\ldots, s_r)$ admits a meromorphic continuation to $\C^r$ with poles along  \eqref{Polehyperplanes}   and  their images under the transformation 
$$s_i \  \mapsto \  w_{r+1-i}- s_{r+1-i}  \qquad \ , \qquad  i=1,\ldots, r\ .$$
The functional equation holds  by the symmetry of equation \eqref{LambdaasR}. \end{thm}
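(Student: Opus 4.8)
The plan is to establish formula \eqref{LambdaasR} by the same device used in Proposition \ref{prop: propertiesnopoles}, but now keeping careful track of the regularisation. First I would take $\mathrm{Re}(s_i)$ large enough that the integral defining $\ML$ converges absolutely, and where the only source of divergence at the endpoint $\infty$ is encoded by the tangential base point $\tone_\infty$. Then I apply the composition-of-paths formula for iterated integrals along $0 \to 1 \to \tone_\infty$, which is compatible with the regularisation at $\tone_\infty$ (this is exactly the content of the tangential-base-point formalism of \cite{MMV}, \S4): one gets
$$\int_0^{\tone_\infty} \underline{\theta}_1(s_1)\cdots\underline{\theta}_r(s_r) = \sum_{k=0}^r \int_0^1 \underline{\theta}_1(s_1)\cdots\underline{\theta}_k(s_k)\,\int_1^{\tone_\infty}\underline{\theta}_{k+1}(s_{k+1})\cdots\underline{\theta}_r(s_r)\ .$$
On the left-hand integrals $\int_0^1$ one changes variables $t\mapsto t^{-1}$, invokes the inversion formula $\theta_i(t^{-1})=\varepsilon_i t^{w_i}\theta_i(t)$ to convert $\underline{\theta}_i(s_i)$ into $\varepsilon_i\,\underline{\theta}_i(w_i-s_i)$ (up to the orientation-reversal sign), and then applies the path-reversal formula $(-1)^k\int_\infty^1 \underline{\tilde\theta}_1\cdots\underline{\tilde\theta}_k = \int_1^\infty \underline{\tilde\theta}_k\cdots\underline{\tilde\theta}_1$ of \cite{Ch}; this turns $\int_0^1\underline{\theta}_1\cdots\underline{\theta}_k$ into $\varepsilon_1\cdots\varepsilon_k\,R(\theta_k,\ldots,\theta_1;w_k-s_k,\ldots,w_1-s_1)$, where $R$ is the regularised integral from $1$ to $\tone_\infty$ (the regularisation at $\tone_\infty$ is the image under $t\mapsto t^{-1}$ of the regularisation at the tangent vector $\tone_0$, which is the honest convergent endpoint $0$, so the $R$-factor with reversed arguments is genuinely a regularised integral from $1$ to $\tone_\infty$ as well). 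This yields \eqref{LambdaasR}; the functional equation is then immediate from the palindromic symmetry $k\leftrightarrow r-k$ of the right-hand side, and the shuffle product follows from the shuffle product for iterated integrals combined with the fact that the regularisation map $\mathcal{R}$, and the tangential-base-point path integral, are compatible with shuffle (again \cite{MMV}, \S4).

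Next I would verify that each factor $R(\theta_1,\ldots,\theta_i;s_1,\ldots,s_i)=\int_1^{\tone_\infty}\underline{\theta}_1\cdots\underline{\theta}_i$ unwinds, via the explicit formula for the regularised integral, into the form \eqref{dag}: a finite $\C$-linear combination of products $\bigl(\int_1^\infty \mathcal{R}(\underline{\theta}_1\cdots\underline{\theta}_j)\bigr)\bigl(\int_0^1 \underline{\theta}_r^\infty\cdots\underline{\theta}_{j+1}^\infty\bigr)$. For the first factor one checks convergence and holomorphy in all $s_\ell\in\C$: by the recursive description of $\mathcal{R}$, the integrand $\mathcal{R}(\underline{\theta}_1\cdots\underline{\theta}_j)$ is a sum of products in which at least the innermost form is $\underline{\theta}_\bullet^0$, which decays exponentially at $\infty$; the polynomial forms $\underline{\theta}_\bullet^\infty$ paired against an exponentially decaying tail still give an absolutely convergent integral, uniformly on compact sets in $s$, hence an entire function of the $s_\ell$. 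For the second factor one computes $\int_0^1 t^{a_1}dt\cdots t^{a_m}dt$ literally: since each $\theta_i^\infty$ is a polynomial $\sum_\nu c_{i,\nu}t^{\nu}$, the form $\underline{\theta}_i^\infty(s_i)=\theta_i^\infty(t)t^{s_i-1}dt$ is a finite sum of monomial forms $c_{i,\nu}t^{s_i+\nu-1}dt$, and the iterated integral of monomial forms over the simplex $0\le t_1\le\cdots\le t_m\le 1$ evaluates to a product of linear factors in the exponents in the denominator — precisely giving a rational function of $s_{i+1},\ldots,s_r$ whose polar locus consists of hyperplanes of the shape $s_i+\cdots+s_r=\alpha$ with $\alpha\in\C$ (the partial sums arise because of the nested nature of the simplex). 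Combining the entire first factors with the rational second factors gives the meromorphic continuation of each $R$, hence of $\ML$ through \eqref{LambdaasR}, with poles only along the hyperplanes \eqref{Polehyperplanes} and, by the functional-equation symmetry, along their images under $s_i\mapsto w_{r+1-i}-s_{r+1-i}$.

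The main obstacle I anticipate is not any single calculation but the bookkeeping needed to justify that the purely formal manipulations above are legitimate: specifically, (i) that the composition-of-paths and path-reversal formulas remain valid when one of the endpoints is the tangential base point $\tone_\infty$ rather than an ordinary point, and that the change of variables $t\mapsto t^{-1}$ intertwines the regularisation at $\tone_\infty$ with the (trivial) regularisation at $0$; and (ii) that the regularised integral $\int_1^{\tone_\infty}$ is correctly expressed by the explicit formula \eqref{dag}, i.e. that the operator $\mathcal{R}$ as defined in \cite{MMV}, \S4.6 indeed produces exactly this combination of $\int_1^\infty\mathcal{R}(\cdots)$ and $\int_0^1(\cdots)^\infty$ terms. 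Both of these are established in \cite{MMV}, \S4 (composition of paths, lemma 4.8 and the surrounding discussion), so the argument here is to cite that machinery and check that the forms $\underline{\theta}_i^\bullet(s_i)$, which depend on the extra complex parameters $s_i$, satisfy its hypotheses uniformly — in particular that for $\mathrm{Re}(s_i)$ large the naive iterated integral converges and agrees with the regularised one, so that the identity of meromorphic functions, once proved in that range, propagates to all of $\C^r$ by analytic continuation. Once this is in place, the convergence, holomorphy, pole location, and functional equation all follow as indicated, with no further subtlety.
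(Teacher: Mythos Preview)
Your proposal is correct and follows essentially the same approach as the paper: the paper's proof simply says ``as for proposition \ref{prop: propertiesnopoles}, using the properties of tangential base points at infinity, which are similar to those of ordinary iterated integrals (see \cite{MMV}, \S4)'', and you have unpacked precisely this---composition of paths along $0\to 1\to \tone_\infty$, the change of variables $t\mapsto t^{-1}$ with the inversion formula, path reversal, and the explicit convergence/rationality analysis of the two types of factors in \eqref{dag}, with the tangential-base-point compatibilities deferred to \cite{MMV}, \S4.
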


\begin{proof} As  for proposition  \ref{prop: propertiesnopoles}, using the properties of tangential base points at infinity, which are  similar to those of ordinary iterated integrals (see \cite{MMV}, \S4).
\end{proof}

\begin{rem} Using the above formulae, one can  express the residues of $\Lambda(\theta_1,\ldots, \theta_r; s_1,\ldots, s_r)$ in terms of functions of the same type, but with  smaller values of $r$.  
\end{rem}

Because of the exponential decay of $\theta^{\infty}$ at infinity, the formulae above converge extremely fast, and can be  highly effective for numerical computations.

\subsection{General case}
The above definitions are easily modified to encompass the case of motives $M$ which are not self-dual. One replaces the functional equation for $\theta=\theta_M$ by
$$\theta\left(\frac{1}{t}\right) = \varepsilon \,t^w \check{\theta}(t)$$
where $\check{\theta}=\theta_{M^{\vee}}$ is associated to the dual motive. The functional equation becomes
$$\Lambda(\theta_1,\ldots, \theta_r; s_1,\ldots, s_r) = \varepsilon_1\ldots \varepsilon_r \Lambda(\check{\theta}_r, \ldots, \check{\theta}_1; w_r-s_r,\ldots, w_1-s_1)\ .$$
In this manner, one can define, for example,  mixed Dirichlet $L$-functions, and so on.

\section{Examples} 

\subsection{Length 1.}  
 By definition,
$$\Lambda(\theta; s) = \int_{0}^{\tone_{\infty}}  \underline{\theta} (s)
= \int_0^{\infty} \mathcal{R}    \underline{\theta}(s)\ .$$ 
Substituting equation \eqref{examplesofR}, we find that 
\begin{equation} \label{Length1Lambda} \Lambda(\theta; s) = \int_0^{\infty} \theta^0 (t)t^{s-1} dt  = \int_0^{\infty}  \left( \theta(t) - \theta^{\infty}(t)\right) t^s \frac{dt}{t} \ ,
\end{equation}
 which  coincides for $\theta=\theta_f$ with  Hecke's formula for the $L$-function of a  modular  form. 
 Furthermore, if $\theta(\frac{1}{t}) = \varepsilon t^w \theta(t)$, then equation \eqref{LambdaasR} reads:
$$ \ML(\theta;  s) = R(\theta; s) + \varepsilon R(\theta; w-s)$$
where, by \eqref{dag},
$$R(\theta;s) = \int_1^{\infty} \mathcal{R} \underline{\theta}(s) -  \int_0^1 \underline{\theta}^{\infty}(s) =\int_{1}^{\infty} \theta^0 (t) t^{s} \frac{dt}{t} - \int_0^1  \theta^{\infty}(t) t^s \frac{dt}{t}\ .$$
 
\begin{example}For the motive $\Q$, we have $\theta_{\Q}^{\infty}=1$, $\varepsilon=1$,  and  hence
$$R(\theta_{\Q}; s) = \int_1^{\infty} \left( \theta_{\Q}(t) -1\right)t^s \frac{dt}{t} - \frac{1}{s}\  .$$
In this way we obtain the classical formula for Riemann's $\xi$-function:
$$\xi(s) = \Lambda(\theta_{\Q};s) =   \int_1^{\infty} \left(\theta_{\Q}(t) -1\right) t^s \frac{dt}{t} +  \int_1^{\infty}  \left( \theta_{\Q}(t)-1\right) t^{1-s} \frac{dt}{t} - \frac{1}{s} -\frac{1}{1-s}\ .$$
\end{example}

\subsection{Length two} We have
$$\Lambda(\theta_1,\theta_2; s_1,s_2) = \int_{0}^{\tone_{\infty}}  \underline{\theta}_1  (s_1) \underline{\theta}_2 (s_2)
= \int_0^{\infty} \mathcal{R}   \left(\underline{\theta}_1(s_1) \underline{\theta}_2(s_2)\right)\ .$$ 
Substituting  \eqref{examplesofR} into this definition,  we find that 
\begin{multline} \label{lambdain2vars} \Lambda(\theta_1 ,\theta_2;s_1,s_2)  = \int_{0\leq t_1 \leq t_2 \leq \infty} \theta_1(t_1) t_1^{s_1-1} \, \theta_2^{0}(t_2) t_2^{s_2-1} \,  dt_1 dt_2    \\
- \int_{0\leq t_1 \leq t_2 \leq \infty} \theta_2^{\infty}(t_1) t_1^{s_2-1} \, \theta^0_1(t_2) t_2^{s_1-1}   \, dt_1 dt_2
\end{multline}
In order to  compute this function, use formula (\ref{LambdaasR}) which reads
$$\Lambda(\theta_1 ,\theta_2;s_1,s_2) =  R(\theta_1,\theta_2; s_1,s_2) 
+ \varepsilon_1 R(\theta_1; w_1-s_1)R(\theta_2; s_2) + \varepsilon_1 \varepsilon_2 R(\theta_2,\theta_1;w_2-s_2,w_1-s_1) \nonumber
 $$
and where
$$R(\theta_1,\theta_2;s_1,s_2) = \int_{1}^{\infty} \mathcal{R} \left(  \underline{\theta}_1 \underline{\theta}_2\right) - 
\int_1^{\infty} \mathcal{R}( \underline{\theta}_1) \int_0^1 \underline{\theta}_2^{\infty} + \int_0^1\underline{\theta}_2^{\infty} \underline{\theta}_1^{\infty}\ .$$
The first integral on the right-hand side is the same as the right-hand side of \eqref{lambdain2vars} except that both lower limits of integration $0$ are replaced by $1$. The integral 
$$\int_0^1 \underline{\theta}_2^{\infty} \underline{\theta}_1^{\infty} = \int_{0\leq t_1\leq t_2\leq 1} \theta_2^{\infty}(t_1) t_1^{s_2-1} dt_1\,  \theta^{\infty}_1(t_2) t_2^{s_1-1} dt_2$$
 is easy to compute since $\theta_1^{\infty}$, $\theta_2^{\infty}$ are simply polynomials.
\begin{example} \label{exDoubleXi}
Consider the  function $\xi(s_1,s_2)=\Lambda(\theta_{\Q}, \theta_{\Q};s_1,s_2)$ which we shall  call the double Riemann $\xi$-function.   It is studied in more detail in \S \ref{sectMultipleXi}.  We have
$$\int_0^1 \underline{\theta}_{2}^{\infty} \underline{\theta}_{1}^{\infty} = \int_{0\leq t_1\leq t_2\leq 1}  t_1^{s_2-1}   t_2^{s_1-1} \, dt_1 dt_2 = \frac{1}{s_2(s_1+s_2)}\  , $$
where $\theta_1=\theta_2= \theta_{\Q}$. 
Putting the  pieces together, we find that 
$$R(\theta_{\Q},\theta_{\Q};s_1,s_2) = \int_{1}^{\infty} \mathcal{R} \left( \underline{\theta
}_{1}  \underline{\theta}_{2}\right) - 
\frac{1}{s_2} \int_1^{\infty} \mathcal{R}(\underline{\theta}_{1}) + \frac{1}{s_2(s_1+s_2)}\ .$$
We deduce from this that $   \ML(\theta_\Q,\theta_\Q;s_1,s_2) $ has poles along 
$$s_1= 1 \ , \ s_2=0   \ , \ s_1+s_2 =0 \ , \ s_1+s_2 =2 \ . $$
One can easily compute  its residues, e.g.
\begin{eqnarray} \mathrm{Res}_{s_1+s_2=0}\,  \ML(\theta_\Q,\theta_\Q;s_1,s_2) & = & s_2^{-1}  \nonumber \\
\mathrm{Res}_{s_2=0}\,  \ML(\theta_\Q,\theta_\Q;s_1,s_2)  &=  & - \ML(\theta_\Q;s_1) \ . \nonumber
\end{eqnarray} 
\end{example}

\subsection{Relation to  Dirichlet series}  \label{sectRelDirichlet}
It is important to note that  multiple $\ML$-functions  are not  expressible as  Dirichlet series in general. 
Consider the case where $r=2$ and $f_1,f_2$ are modular forms with Fourier expansions
$$f_1 = \sum_{n\geq 0} a_n q^n \quad \ , \quad f_2 = \sum_{n \geq 0} b_n q^n \ .$$
For the time being,  let us assume that $a_0=b_0=0$ for simplicity. 
Then  by making the change of variables $t_1=xy, t_2=y$ in the definition
$$\ML(\theta_{f_1}, \theta_{f_2};s_1,s_2) = \int_{0\leq t_1 \leq t_2 \leq \infty}  f_1( i t_1) f_2( it_2)  t_1^{s_1-1} t_2^{s_2-1} dt_1 dt_2 $$
and expanding, we obtain (for  $\mathrm{Re} (s_1), \mathrm{Re} (s_2) $ sufficiently large)
$$\ML(\theta_{f_1}, \theta_{f_2};s_1,s_2) = \sum_{m,n\geq 1}  a_m b_n  \int_{0\leq x\leq 1} x^{s_1-1} dx  \int_{0}^{\infty} e^{-2 \pi (mx+n) y} y^{s_1+s_2-1}dy$$
The right-hand integral is  a  simple Mellin transform:
$$\int_{0}^{\infty} e^{-2 \pi (mx+n) y} y^{s_1+s_2-1}dy  \quad =  \quad (2\pi )^{-s_1-s_2}  \Gamma(s_1+s_2) \frac{1}{(mx+n)^{s_1+s_2}}\ .$$
It follows that 
\begin{equation} \label{Lambdanotdirichlet}  \ML(\theta_{f_1}, \theta_{f_2};s_1,s_2)  =   (2\pi )^{-s_1-s_2} \Gamma(s_1+s_2) \sum_{m,n\geq 1} a_m b_n \int_0^1 \frac{x^{s_1}}{(mx+n)^{s_1+s_2}} \frac{dx}{x}
\end{equation} 
The hypergeometric integrals  
$$\int_0^1 \frac{x^{s_1}}{(mx+n)^{s_1+s_2}} \frac{dx}{x}$$
reduce to rational functions in $m,n$ when $s_1, s_2$ are integers, but not otherwise. 
\begin{lem} If $p, m,n>0$ are  integers, then
$$\frac{\Gamma(s+p)}{(p-1)!} \int_0^1 \frac{x^{p}}{(mx+n)^{p+s}} \frac{dx}{x} \ = \ \frac{ \Gamma(s)}{m^p n^s} \ - \  \sum_{r=0}^{p-1} \frac{1}{r!}\frac{\Gamma(s+r)}{m^{p-r}(m+n)^{s+r}}\ .$$
\end{lem}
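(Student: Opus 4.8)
The plan is to prove the identity by induction on $p$, using integration by parts to lower the exponent $p$ in the numerator. Write $F_p(s) = \int_0^1 x^p (mx+n)^{-p-s}\, \frac{dx}{x} = \int_0^1 x^{p-1}(mx+n)^{-p-s}\,dx$. The base case $p=1$ should follow from a direct computation: $\int_0^1 (mx+n)^{-1-s}\,dx = \frac{1}{s}\bigl( n^{-s} - (m+n)^{-s}\bigr)$, which after multiplying by $\Gamma(s+1)/0! = s\,\Gamma(s)$ gives $\Gamma(s)\bigl(n^{-s} - (m+n)^{-s}\bigr)/1$, matching the right-hand side (the sum has its single term $r=0$ equal to $\Gamma(s)/(m(m+n)^s)$, and indeed $\Gamma(s)/(mn^s) - \Gamma(s)/(m(m+n)^s)$ — wait, one must be careful with the $m^p$ in the denominator; see below).

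First I would set up the integration by parts carefully. For the inductive step, write $x^{p-1}(mx+n)^{-p-s} = x^{p-1} \cdot (mx+n)^{-p-s}$ and integrate $(mx+n)^{-p-s}$, whose antiderivative is $-\frac{1}{m(p+s-1)}(mx+n)^{-(p-1)-s}$. This yields
$$
F_p(s) = \left[ -\frac{x^{p-1}(mx+n)^{-(p-1)-s}}{m(p+s-1)} \right]_0^1 + \frac{p-1}{m(p+s-1)} \int_0^1 x^{p-2}(mx+n)^{-(p-1)-s}\,dx,
$$
so that $F_p(s) = -\frac{(m+n)^{-(p-1)-s}}{m(p+s-1)} + \frac{p-1}{m(p+s-1)} F_{p-1}(s)$, where the boundary term at $0$ vanishes since $p\geq 2$. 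The crucial bookkeeping step is then to multiply through by $\frac{\Gamma(s+p)}{(p-1)!}$ and use $\Gamma(s+p) = (s+p-1)\Gamma(s+p-1)$ together with $(p-1)! = (p-1)\cdot(p-2)!$, which turns the recursion for $F_p$ into a recursion for the full expression $\frac{\Gamma(s+p)}{(p-1)!}F_p(s)$ of the form: it equals $\frac{\Gamma(s+p-1)}{(p-2)!}F_{p-1}(s)$ (with $m^{p-1}$ replaced by $m^p$ in the implicit normalization — one must track the power of $m$) minus the new term $\frac{1}{(p-1)!}\frac{\Gamma(s+p-1)}{m(m+n)^{s+p-1}}$, which is exactly the $r=p-1$ term of the claimed sum. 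Applying the induction hypothesis to the $F_{p-1}$ piece then reproduces the $r=0,\dots,p-2$ terms together with the main term $\Gamma(s)/(m^p n^s)$.

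The main obstacle I anticipate is the careful tracking of the power of $m$ in the denominators: the statement has $m^p n^s$ in the main term and $m^{p-r}(m+n)^{s+r}$ in the sum, and these powers shift under the recursion in a way that must be matched precisely against the factor $\frac{1}{m(p+s-1)}$ produced by each integration by parts. A clean way to handle this is to clear the $m$-dependence at the outset by the substitution $x \mapsto x$ but factoring $n$ out, i.e. write $mx+n = n(1 + (m/n)x)$, reducing to the one-parameter family $G_p(s,u) = \int_0^1 x^{p-1}(1+ux)^{-p-s}\,dx$ with $u = m/n$, proving the analogous recursion for $G_p$, and only at the end restoring $m,n$. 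With the substitution in place the induction is purely formal. An alternative, which I would mention as a remark, is to expand $(mx+n)^{-p-s}$ by the binomial series in $x$ and integrate term by term to get a hypergeometric ${}_2F_1$, then recognize the right-hand side as its value via a contiguous-relation or a Euler-type transformation; but the integration-by-parts induction is more elementary and self-contained, so that is the route I would take.
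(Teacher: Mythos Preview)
The paper states this lemma without proof, so there is nothing to compare against directly. Your approach by induction on $p$ via integration by parts is correct and is the natural elementary argument: the recursion
\[
F_p(s) \;=\; -\,\frac{(m+n)^{-(p-1)-s}}{m(p+s-1)} \;+\; \frac{p-1}{m(p+s-1)}\,F_{p-1}(s)
\]
becomes, after multiplying by $\Gamma(s+p)/(p-1)!$ and using $\Gamma(s+p)=(s+p-1)\Gamma(s+p-1)$,
\[
\frac{\Gamma(s+p)}{(p-1)!}F_p(s) \;=\; \frac{1}{m}\cdot\frac{\Gamma(s+p-1)}{(p-2)!}F_{p-1}(s) \;-\; \frac{1}{(p-1)!}\,\frac{\Gamma(s+p-1)}{m(m+n)^{s+p-1}},
\]
and the factor $1/m$ in front of the first term is exactly what promotes $m^{p-1}$ to $m^p$ in the inductive hypothesis, while the second term is the new $r=p-1$ summand. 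One small correction: in your base case you dropped the factor $1/m$ from the antiderivative (the integral $\int_0^1 (mx+n)^{-1-s}\,dx$ equals $\frac{1}{ms}\bigl(n^{-s}-(m+n)^{-s}\bigr)$, not $\frac{1}{s}\bigl(n^{-s}-(m+n)^{-s}\bigr)$); with that fix the $p=1$ case matches immediately, and your later ``wait'' about tracking $m^p$ becomes unnecessary. The substitution $mx+n = n(1+ux)$ you propose is a reasonable cleanup but not needed once the $1/m$ is restored.
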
 
Therefore, consider the Dirichlet series
$$D(f,g; k,s) = \sum_{m, n\geq 1}  \frac{a_m b_n }{m^k (m+n)^s}$$
which arose (in the case of cusp forms) in a similar form in \cite{Ma1}, \S3  and write
$$\mathbb{D}(f,g;k,s) = (2\pi)^{-k-s} \Gamma(k)  \Gamma(s)  \, D(f,g;k,s)\ .$$
When the first argument is a  fixed integer, the  double $\ML$-function  $\Lambda(\theta_f,\theta_g)$  is a  linear combination of  Dirichlet series, but this is not true  in general:

\begin{lem} Let $p\geq 1$ be an integer, and let $f,g$ be modular forms for the full modular group which are not necessarily cuspidal. Then 
$$\ML(f,g;p,s) = \ML(f,p) \ML(g,s)  +  \frac{a_0}{p} \ML(g,s+p)  - \frac{b_0}{s} \ML(f,s+p) 
- \, \sum_{r=0}^{p-1}   \binom{p-1}{r} \, \mathbb{D}(f,g; p-r,s+r) 
$$
\end{lem}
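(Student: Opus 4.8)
The plan is to evaluate $\ML(f,g;p,s)$ directly from the two-variable formula \eqref{lambdain2vars}, to isolate the contributions of the constant Fourier coefficients $a_0,b_0$ by carrying out the inner $t_1$-integration explicitly, and then to recognise what is left as the Dirichlet-series integral of \S\ref{sectRelDirichlet}, which the preceding (hypergeometric) Lemma collapses to a finite sum. Throughout I write $\theta_f(t)=f(it)=\sum_{n\geq0}a_ne^{-2\pi nt}$, so that $\theta_f^\infty=a_0$ and $\theta_f^0(t)=\sum_{n\geq1}a_ne^{-2\pi nt}$, and similarly for $g$; recall also that $\ML(f,\sigma)=(2\pi)^{-\sigma}\Gamma(\sigma)L(f;\sigma)$ by \eqref{Length1Lambda} and the Mellin transform $\int_0^\infty e^{-2\pi nt}t^{\sigma-1}dt=(2\pi n)^{-\sigma}\Gamma(\sigma)$.

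The first step is to set $\theta_1=\theta_f$, $s_1=p$, $\theta_2=\theta_g$, $s_2=s$ in \eqref{lambdain2vars}. Its second integral is $-b_0\int_{0\leq t_1\leq t_2\leq\infty}t_1^{s-1}\theta_f^0(t_2)t_2^{p-1}\,dt_1dt_2$, and since $\int_0^{t_2}t_1^{s-1}dt_1=t_2^{s}/s$ this equals $-\tfrac{b_0}{s}\ML(f,s+p)$. In its first integral I would substitute $f(it_1)=a_0+\theta_f^0(t_1)$; the $a_0$-piece gives $\int_0^{t_2}t_1^{p-1}dt_1=t_2^{p}/p$, hence $\tfrac{a_0}{p}\ML(g,s+p)$, and there remains $J:=\int_{0\leq t_1\leq t_2\leq\infty}\theta_f^0(t_1)t_1^{p-1}\theta_g^0(t_2)t_2^{s-1}\,dt_1dt_2$. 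Since $\theta_f^0$ and $\theta_g^0$ have no constant term, $J$ is exactly the integral evaluated in \eqref{Lambdanotdirichlet}: the change of variables $t_1=xy$, $t_2=y$ and the Fourier expansion of $\theta_f^0,\theta_g^0$, carried out exactly as in \S\ref{sectRelDirichlet}, give $J=(2\pi)^{-p-s}\Gamma(p+s)\sum_{m,n\geq1}a_mb_n\int_0^1 x^{p-1}(mx+n)^{-p-s}\,dx$.

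The next step is to apply the preceding Lemma to each summand of $J$ (this is where $p$ being a positive integer enters): $\Gamma(p+s)\int_0^1 x^{p-1}(mx+n)^{-p-s}dx=(p-1)!\bigl(\Gamma(s)m^{-p}n^{-s}-\sum_{r=0}^{p-1}\tfrac{1}{r!}\Gamma(s+r)m^{-(p-r)}(m+n)^{-(s+r)}\bigr)$. Summing the first term over $m,n\geq1$ produces $(2\pi)^{-p-s}(p-1)!\,\Gamma(s)L(f;p)L(g;s)$, which is exactly $\ML(f,p)\ML(g,s)$ since $\Gamma(p)=(p-1)!$. Summing the $r$-th correction term produces $(2\pi)^{-p-s}(p-1)!\tfrac{\Gamma(s+r)}{r!}D(f,g;p-r,s+r)$; writing $(2\pi)^{-p-s}=(2\pi)^{-(p-r)-(s+r)}$ and $\tfrac{(p-1)!}{r!(p-1-r)!}=\binom{p-1}{r}$, this becomes $\binom{p-1}{r}\mathbb{D}(f,g;p-r,s+r)$ by the definition of $\mathbb{D}$. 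Collecting these pieces gives the asserted identity.

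A word on rigour, which is where the only real subtlety lies. All the steps above are legitimate for $\mathrm{Re}(s)$ sufficiently large, and that suffices because both sides are meromorphic. When $f$ is not cuspidal, however, the iterated integral in \eqref{lambdain2vars} converges at $t_1\to0$ only once $\mathrm{Re}(p)$ is large; for the remaining positive integers $p$ I would instead run the computation with $p$ replaced by a complex variable $s_1$ — the boundary terms becoming $\tfrac{a_0}{s_1}\ML(g,s+s_1)$ and $-\tfrac{b_0}{s}\ML(f,s+s_1)$, and $J$ the meromorphic function $(2\pi)^{-s_1-s}\Gamma(s_1+s)\sum_{m,n}a_mb_n\int_0^1 x^{s_1-1}(mx+n)^{-s_1-s}dx$ — and specialise $s_1=p$ only at the end, where the preceding Lemma applies. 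I do not expect a conceptual obstacle: the identity is essentially forced once \eqref{lambdain2vars} is available, and the main things to get right are the bookkeeping of the $(2\pi)$- and $\Gamma$-factors in passing from $J$ to the completed series $\mathbb{D}$, and this analytic-continuation point for non-cuspidal forms.
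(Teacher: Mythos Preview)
Your proof is correct and follows essentially the same approach as the paper's own proof. The paper's argument is stated very tersely --- ``use the definition of the double $\ML$-functions as an iterated integral $\Lambda(f,g;p,s) = \int_0^{\infty} \theta_f^0 \theta_g^0 + \theta_f^{\infty} \theta_g^0 - \theta_g^{\infty} \theta_f^0$, expand as above and apply the previous lemma'' --- and you have simply carried out these steps in full detail: the three pieces $\theta_f^{\infty}\theta_g^0$, $-\theta_g^{\infty}\theta_f^0$, $\theta_f^0\theta_g^0$ are exactly your $\tfrac{a_0}{p}\ML(g,s+p)$, $-\tfrac{b_0}{s}\ML(f,s+p)$, and $J$, and your treatment of $J$ via \eqref{Lambdanotdirichlet} and the hypergeometric lemma is what ``expand as above and apply the previous lemma'' means. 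Your remark on analytic continuation in $s_1$ for non-cuspidal $f$ is a sensible addition that the paper leaves implicit.
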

\begin{proof} Use the definition of the double $\ML$-functions as  an iterated integral
$$\Lambda(f,g;p,s) = \int_0^{\infty} \theta_f^0 \theta_g^0 +    \theta_f^{\infty} \theta_g^0 - \theta_g^{\infty} \theta_f^0 \ ,  $$
expand as above and apply  the previous lemma.
\end{proof} 
By varying $p$ in the previous lemma we can express each 
$\mathbb{D}(f,g; p ,s )$ as a linear combination of  products of  two single $\ML$-functions, 
and  double $\ML$-functions in which the first argument is constant. 
It follows that  $\mathbb{D}(f,g;p,s)$ admits a memorphic continuation to $\C$.

\section{Variants} Some naturally occurring Dirichlet series require a  slight modification of \S  \ref{sectThetadef}.

  \subsection{Examples}

\begin{example}
 One can apply the definition of an $L$-function  to mixed motives. For instance, for a direct sum $M\oplus N$, the definitions give 
 $$L(M \oplus N; s) = L(M;s) L(N;s)\ .$$
We are thus led to consider products of Dirichlet series.  We  require that their  completed $L$-functions  have distinct poles and that their product satisfies a functional equation (this happens, for example: if $M$, $N$ are pure and have the same weight, or  if $N\cong  M^{\vee}(n)$ for some $n\in \Z$). 
\end{example} 

\begin{example}  \label{exampleramified} If one considers  motives $M$ over rings of  integers,
it can happen that  $L(M;s)$ and $L(\gr^W M;s)$ differ by a finite number of Euler factors \cite{SchollLvalues}.

For $p$ prime, consider $M=H^1(\Pro^1\backslash \{0,\infty\}, \{1,p\})$ which is an  extension 
$$0 \To \Q \To M  \To \Q(-1)\To 0$$
(e.g., in the category $\MT(\Z[1/p])$ of mixed Tate motives over $\Q$   ramified only at $p$). One of its periods is $\log p$. Its $L$-function is 
$$L(M;s) = (1-p^{-s}) \,\zeta(s) \zeta(s-1)$$
which  differs from $\zeta(s) \zeta(s-1)  = L(\Q \oplus \Q(-1);s)$  by a single Euler factor.

 Similar examples  include Dirichlet's $\lambda$ and $\eta$ functions:
\begin{eqnarray} \label{Dirichetlambdaeta} L_\lambda(s) & =&  \left(1- 2^{-s}\right) \zeta(s) \  = \  \sum_{n\geq 1 } \frac{1}{(2n+1)^s}    \\
L_\eta(s)  &= & \left(1- 2^{1-s}\right) \zeta(s) \  = \  \sum_{n\geq 1 } \frac{(-1)^n}{n^s} \nonumber
\end{eqnarray}
The values of $L_\eta(s)$ at positive integers are   called Euler sums, and  are periods of simple  extensions of mixed Tate motives over $\Z[\frac{1}{2}]$, i.e.,  ramified at the prime $2$.
\end{example} 

\begin{example} \label{ExampleEis} (Eisenstein series.) Hecke's formula for  $L$-functions applies to all modular forms,  not just cusp forms. 
Consider the Eisenstein series
$$\G_{2k} = - \frac{b_{2k} }{4k} + \sum_{n\geq 1}  \sigma_{2k-1}(n)\, q^n $$
where $\sigma$ denotes the divisor function. It defines a modular form of weight $2k$ and level one, for all $k\geq 2$. 
Let $\G_{2k}^0 = \G_{2k} +   \frac{b_{2k} }{4k}$ denote the Eisenstein series with its zeroth Fourier coefficient removed. Following Hecke, one defines
$$\Lambda(\G_{2k}; s) = \int_0^{\infty}  \G_{2k}^0 (\tau) \tau^s \frac{d\tau}{\tau}$$
which converges for $\mathrm{Re}(s)>2k$.   It coincides with our definition \eqref{Length1Lambda}. It admits a meromorphic continuation to $\C$ and satisfies the functional equation 
$$\Lambda(\G_{2k}; s) = (-1)^k \Lambda(\G_{2k}; 2k-s)  \  . $$
It can be written  $\Lambda(\G_{2k}; s)  =  (2\pi)^{-s}  \Gamma(s)\,  L(\G_{2k};s)$ where the $L$-series
 $$L(\G_{2k}; s) \ =  \ \sum_{n\geq 1} \frac{\sigma_{2k-1}(n) }{ n^s} \  =  \  \zeta(s) \zeta(s-2k+1)$$
 factorises as a product of zeta functions.
 One might  think that the associated motive is $\Q(0) \oplus \Q(1-2k)$ but the gamma factors  do not agree.
  Indeed, the completed $L$-function of the latter is $\xi(s) \xi(s-2k+1)$ but in fact the completed $L$-function of the Eisenstein series is different:
   \begin{equation} \label{LambdaG2kasxifunctions} \Lambda(\G_{2k}; s) = \left(\frac{ (s-1)(s-3) \ldots (s-2k+1)}{ 2 (2 \pi)^{k}} \right) \, \xi(s) \xi(s-2k+1)\ .
 \end{equation}
 The  polynomial prefactor in brackets   plays an important role.  The critical values of $\Lambda(\G_{2k};s)$ will be defined to be the integers $s=1,\ldots, 2k-1$. 
    In this example it is not   the $L$-function which has changed but rather the completed $L$-function.
 \end{example}

\subsection{Reminder on Mellin transforms}
Recall that the Mellin transform 
$$\Mel(f)(s) = \int_0^{\infty} f(t) t^s \frac{dt}{t}$$
for a suitable continuous function $f$, 
satisfies the formal properties:
 \begin{eqnarray} \label{Mellinproperties}
 \Mel( t f)(s) & = & \Mel(f) (s+1) \\ 
 \Mel (f (tn)) & = & n^{-s}  \Mel(f(t))  \nonumber \\
 s\, \Mel(f) & = & - \Mel( f' t) \nonumber \\
 \Mel(f_1\star f_2) & = &  \Mel( f_1) \Mel(f_2) \nonumber 
  \end{eqnarray} 
where the convolution is defined by 
$$(f_1 \star f_2 )(t) = \int_0^{\infty} f_1\left( \frac{t}{x}\right) f_2(x) \frac{dx}{x}$$
and  all integrals are assumed to  converge. We shall apply these operations to  the functions $\theta^0$ where $\theta$  satisfies  \S\ref{sectIteratedMellin},  (1) and (2).   They   do not necessarily preserve these properties.

\begin{rem}
Since the gamma factor of a motivic $L$-function is a product of gamma  functions $\frac{1}{2}\Gamma(\frac{s}{2})$, its inverse Mellin transform can be generated  from a single function  $e^{-t^2}$ using the operations \eqref{Mellinproperties}.
 Computing the expansions of multiple $L$-functions (as in \S\ref{sectRelDirichlet}, for example),   requires us to study the   larger class of functions   generated by  $e^{-t^2}$ under  these operations  together with the extra operation of   taking  indefinite iterated integrals.
 \end{rem}
  
 \subsection{Operations on theta functions}
 The previous  examples suggest the following natural operations on $L$-functions of motives.
  \begin{enumerate} [label=(\textbf{L\arabic*})]
 \item (Tate twists). Replacing $M$ by $M(n)$ shifts the argument:
$$\Lambda(M(n), s) = \Lambda(M, s+ n)$$

  \item (Local factors).  Changing one or more Euler factors amounts to  multiplying $\Lambda(M;s)$ by a polynomial in $n^{-s}$, for some $n\in \N$.

 \vspace{0.1in}
 \item (Gamma factors).  Modifying the gamma factors, via $\Gamma(s+1) = s\Gamma(s)$, can be  encoded by multiplying $\Lambda(M;s)$ by a polynomial in $s$. 
 
 \vspace{0.1in}

 \item (Direct sums). Taking direct sums corresponds to multiplication:
$$\Lambda(M\oplus M';  s) = \Lambda(M;s) \, \Lambda(M';s)$$
\end{enumerate} 

Operations $(\mathbf{L1})-(\mathbf{L3})$ preserve the rank of $\gamma(s)$,  but $(\mathbf{L4})$ does not.   
\\
 
We wish to apply the above operations in such a manner that  our initial assumptions on the $L$-functions and, most importantly, their   functional equations are respected. 
\vspace{0.1in}

By  \eqref{Mellinproperties}, these operations are generated on  inverse Mellin transforms   by:

\begin{enumerate} [label=(\textbf{T\arabic*})]
\vspace{0.1in}

\item (Multiplication by $t^{\pm}$).  $\theta(t) \mapsto t^{\pm } \theta(t)$. 
\vspace{0.1in}

\item (Rescaling). $\theta(t)\mapsto \theta(n t)$. 
\vspace{0.1in}

\item (Differentiating).  $\theta(t) \mapsto   - t \theta'(t).$
\vspace{0.1in}
\item (Convolution product). $\theta_1,\theta_2 \mapsto \theta_1\star \theta_2$
\vspace{0.1in}
\end{enumerate}

 \subsection{New theta functions from old} \label{sectNewTheta} 
Starting with the inverse Mellin transforms of  a set of motives \S\ref{sectThetadef}, we shall allow ourselves to generate  new theta functions using  $(\mathbf{T1})-(\mathbf{T4})$, provided that they preserve the properties  (1) and (2) of \S\ref{sectIteratedMellin}. For $(\mathbf{T4})$ this means that  $(\theta_1 \star \theta_2)^0 = \theta_1^0 \star \theta_2^0$ will be  a convolution integral, and $(\theta_1 \star \theta_2)^{\infty}$  must be  a polynomial. It  is uniquely determined from $(\theta_1 \star \theta_2)^0$ by the inversion formula.

For example, one might multiply by $t^{-1}$ in $(\mathbf{T1})$ but only on condition that $t^{-1} \theta^0(t)$  is still a polynomial. Similarly, operation  $(\mathbf{T3})$ does not in general respect the inversion relation, but can do  when combined with the other operations. 
For example,  combining $(\mathbf{T1})$ and $(\mathbf{T3})$ yields a differential operator
 $$D_w \theta = - (w+1) t \theta'  - t^2 \theta''$$
 which preserves the inversion formula $\theta(t^{-1}) = \pm t^w \theta(t)$  in degree $w$.  
By Mellin transform it corresponds to    multiplication by the factor $s(w-s)$.

 The operations described above can be codified by working in the $(\N \times \Z/2\Z)$-graded vector space  $\Theta(S)$ whose elements in 
 degree $(w,\varepsilon)$ satisfy the inversion and growth properties \S\ref{sectIteratedMellin}, (1), (2), and  which is generated by the inverse Mellin transforms  of a set $S$ of motives.  The space $\Theta(S)$ is stable under certain  combinations of operators  $(\mathbf{T1})-(\mathbf{T4})$.

 \begin{rem} \label{RemMult} (Multiplicative structure).  There is another operation on the graded space of  theta functions, which is multiplication.  Its counterpart for $L$-functions is a convolution operation which  is quite alien from an arithmetic  perspective. 
 
 Nonetheless,    iterated integrals essentially  subsume multiplication, since 
 $$\int \theta_1'  dt \, \theta_2 dt =  \int \left(\theta_1 \theta_2\right) dt$$ 
 where the integral on the left is a double iterated integral, and the one on the right is a single  integral of the product of $\theta_1$ and $\theta_2$. 
 This gives a first hint  as to why one can  find unexpected periods arising out of multiple $L$-functions associated to motives. 
  \end{rem}

\section{Multiple $\ML$-values and periods} \label{sectMultipleML}

\subsection{Totally critical values}
Starting from a finite  collection of motives and their inverse Mellin transforms, we can generate a space of theta functions  following \S\ref{sectNewTheta}, and consider the associated multiple $L$-functions of  \S\ref{sectIteratedMellin}. The next two paragraphs give examples where the values 
 $\ML(\theta_1,\ldots, \theta_r;n_1,\ldots, n_r)$ are related to interesting periods.
In all these examples, the values of $n_i$ are   integers of the following form. 
\begin{defn}  Call $n_1,\ldots, n_r \in \Z $  \emph{totally critical} for $\theta_1,\ldots, \theta_r$  if, for 
 each $i$,  $n_i$ is critical for $\theta_{i}$.
\end{defn} 
Since we have not defined critical values for arbitrary theta functions, this definition has  limited value.  However, when $\theta_M$ is the inverse Mellin transform of a simple motive $M$,  the standard definition \S\ref{sectConjL}  applies.
When $\theta = \theta_f$ is associated to  a  modular form $f$ of weight $w\in \N$,  we say that  the critical values of $\theta_f$ are $1 \leq n \leq w-1$. This  agrees with the usual definition   for  cusp forms, but, strictly speaking,   falls  outside its  scope when $f$ is  an Eisenstein series (example \ref{ExampleEis}).

\begin{example}
Suppose  that $\theta_i=\theta_{f_i}$, where $f_i$ are modular forms of integer weight $w_i\geq 2$ for a congruence subgroup of $\SL_2(\Z)$. The totally critical values
$$\ML(\theta_1,\ldots, \theta_r;  n_1,\ldots, n_r) \quad \hbox{ for } \quad  0 <  n_i <  w_i \ ,$$ 
are called totally holomorphic multiple modular values and  can  be interpreted  as periods of the relative completion \cite{HaGPS} of the fundamental groupoid of the underlying modular curve, with possible tangential base points.  Examples  of  level one were given  in the first half of this talk \cite{BrIhara} and illustrate some of the phenomena which can arise. 
\end{example}

\subsection{Remarks}
It is tempting to  ask if    every totally critical value  of a multiple $L$-function $\ML(\theta_1,\ldots, \theta_r; n_1,\ldots, n_r)$   is a period. 
Considerable  caution is required 
in the case where  the $\theta_i$ are  associated to motives of  higher rank $>2$ (or have varying  ranks),
since I currently know of no examples where this is either true or false for    $r\geq 2$. 

In low ranks, examples suggest that if each $\theta_i$ is the theta  function  associated to a  simple motive, then the totally critical values 
$\ML(\theta_1,\ldots, \theta_r; n_1,\ldots, n_r)$   are non-effective periods  of coradical (unipotency) filtration  $\leq r-1$ (see for example, theorem 22.2 of \cite{MMV}). If true,  this would be  consistent with Deligne's conjecture in the classical case  $r=1$, since a period of coradical filtration zero is a  pure period \cite{BrNotes}. Note that  the $L$-function of an Eisenstein series, as in example \ref{ExampleEis},  has a critical value which is an odd zeta value and hence has coradical filtration $1$.

One might  think that  $\ML(\theta_1,\ldots, \theta_r;n_1,\ldots, n_r)$ is a period of an iterated extension of tensor products of the motives one intially starts  with. This is  not quite  the case because of the appearance of secondary or `convolution' periods which are not unrelated to the multiplicative structure on theta functions (remark \ref{RemMult}).  However, \cite{BrIhara} gives  many examples where, by taking appropriate linear combinations of totally critical values,  it is   possible to remove these additional periods. The Ihara-Takao  equation and its variants are  examples of this phenomenon.

If we allow ourselves  to speculate even further, then we  might hope that a certain class of periods (defined using the Hodge filtration) of \emph{all possible} iterated extensions of motives of a given type would be expressible in terms of multiple $L$-values (see theorem \ref{thmMZVasML} for an example). 
A  simple  situation where this could  potentially occur is captured by the following:

\begin{conj}
Let $\mathcal{E}$ be an extension of $\Q$ by $M$ in a suitable category of realisations of motives\footnote{for instance the Tannakian category generated by the Betti and de Rham realisations of $H^n(X\backslash A, B \backslash (A\cap B))$, where $X$ is smooth projective over $\Q$, and $A,B$ are a simple normal crossing divisor  with no common components. } over $\Q$,  where $M$ is of rank 2 and has weight $m\leq - 2$, i.e.,  
$$0 \To M  \To \mathcal{E} \To \Q \To 0\ . $$
 Assume $ M_{dR}$ is not of type $(m,m)$, so  $F^m \mathcal{E}_{dR}$ is two-dimensional.  We conjecture that
 $$\mathrm{comp}_{B,dR} \, \left( \textstyle{\bigwedge^2} F^m \mathcal{E}_{dR} \right) \quad \subseteq \quad   \textstyle{\bigwedge^2} \mathcal{E}_B\otimes_{\Q} R$$
 where $R$ is the  vector space over $\overline{\Q}[2\pi i
]$  generated by the multiple  $\ML$-values of length at most two generated from the theta functions of the  objects $\Q$ and $M$. 
\end{conj}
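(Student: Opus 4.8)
The plan is to reduce the statement to a single ``length--two extension period'' and then to match that period against a double multiple $\Lambda$--value. Fix bases adapted to the weight filtration $0\to M\to\mathcal{E}\to\Q\to 0$. On the de Rham side, since $F^m M_{dR}$ is one--dimensional (as $M_{dR}$ is not of type $(m,m)$) while $F^m\mathcal{E}_{dR}$ is two--dimensional, choose $e_1\in F^m M_{dR}$, extend it to a basis $e_1,e_2$ of $M_{dR}$, and pick $e_3\in F^m\mathcal{E}_{dR}$ lifting a de Rham generator of $\Q(0)_{dR}$; then $\bigwedge^2 F^m\mathcal{E}_{dR}=\langle e_1\wedge e_3\rangle$. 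On the Betti side pick $\gamma_1,\gamma_2$ spanning $M_B$ and $\gamma_3$ lifting a generator of $\Q(0)_B$. Since $\mathrm{comp}_{B,dR}$ respects the weight filtration and is the identity on $\Q(0)$, in these bases it is block--triangular: $\mathrm{comp}_{B,dR}(e_i)=\sum_j (P_M)_{ji}\,\gamma_j$ for $i=1,2$, with $P_M$ a period matrix of $M$, while $\mathrm{comp}_{B,dR}(e_3)=\gamma_3+\alpha_1\gamma_1+\alpha_2\gamma_2$, the $\alpha_i$ being the extension periods of $\mathcal{E}$. Wedging,
$$\mathrm{comp}_{B,dR}(e_1\wedge e_3)=W\,\gamma_1\wedge\gamma_2+(P_M)_{11}\,\gamma_1\wedge\gamma_3+(P_M)_{21}\,\gamma_2\wedge\gamma_3\ ,\qquad W:=(P_M)_{11}\alpha_2-(P_M)_{21}\alpha_1\ .$$
A direct check shows that $W$ is unchanged under $e_3\mapsto e_3+\lambda e_1$ and changes by a $\Q$--linear combination of $(P_M)_{11},(P_M)_{21}$ under $\gamma_3\mapsto\gamma_3+\mu_1\gamma_1+\mu_2\gamma_2$; so the conjecture is equivalent to the assertion that all entries of $P_M$, and the number $W$, lie in $R$.

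The entries of $P_M$ are periods of the pure rank--two motive $M$. By the known cases of Deligne's conjecture these are, up to powers of $\pi$ and factors in $\overline{\Q}$, the critical values $\Lambda(\theta_M;n)$; since positive powers of $\pi$ are themselves the length--one values $\Lambda(\theta_\Q;2j)=\xi(2j)$, and the shuffle product keeps products of two length--one values inside the length--$\le 2$ span, the entries of $P_M$ lie in $R$. This is precisely the consistency with Deligne's classical conjecture (the case $r=1$) recalled in \S\ref{sectConjL}, so the real content of the conjecture is the single assertion $W\in R$.

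To prove $W\in R$ the natural route is to exhibit it, modulo terms of length $\le 1$, as a totally critical double value $\Lambda(\theta_M,\theta_\Q;n_1,n_2)$ or $\Lambda(\theta_\Q,\theta_M;n_1,n_2)$, and I would do this in three steps. First, identify $\mathrm{Ext}^1(\Q,M)$ in the chosen category of realisations and describe the image of its regulator/period map; conjecturally (Beilinson) this space is small, with class pinned down by a non--critical value of $L(M;s)$, so that $\alpha=(\alpha_1,\alpha_2)$ is determined modulo $P_M(\Q^2)+\Q^2$ by that value together with the Hodge filtration of $\mathcal{E}$. Second, construct the ``motivic companion'' of the double value: a rank--three object $\mathcal{N}$ built from the iterated Mellin transform $\int_0^{\tone_{\infty}}\underline{\theta}_M(s_1)\,\underline{\theta}_\Q(s_2)$ in the way that multiple modular values are built from relative completions, so that its Betti--de Rham period matrix realises $\Lambda(\theta_M,\theta_\Q;s_1,s_2)$ as a genuine length--two period --- the $L$--function analogue of Theorem 22.2 of \cite{MMV} and of Theorem \ref{thmMZVasML}. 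Third, match the weight graded pieces, Hodge filtrations and extension classes of $\mathcal{E}$ and $\mathcal{N}$: once these agree, rigidity of the mixed structure forces $W$ into the $\overline{\Q}[2\pi i]$--span of the length--two period of $\mathcal{N}$ and of length--one $\Lambda$--values, hence into $R$.

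The hard part is the combination of the second and third steps, and it is essentially why the statement is only a conjecture: one must know that the length--$\le 2$ multiple $\Lambda$--values generated by $\Q$ and $M$ already span, over $\overline{\Q}[2\pi i]$, the whole space of length--$\le 2$ periods of the Tannakian subcategory they generate --- equivalently, that the associated multiple $L$--functions have \emph{enough} totally critical values to detect every iterated extension of $\Q$ by tensor powers of $M$. At present this is accessible only through explicit iterated--integral computations in special cases --- mixed Tate motives over $\Z$ (Theorem \ref{thmMZVasML}) and level one multiple modular values \cite{BrIhara} --- and in general it rests both on Beilinson's conjecture, to count the extensions, and on a non--degeneracy of the period pairing. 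The case where $M$ is, up to a Tate twist, attached to a Hecke eigenform should be the most approachable, since there the relevant double $\Lambda$--values can be studied by variants of the unfolding of \S\ref{sectRelDirichlet} and linked to completed double Dirichlet series of the type $\mathbb{D}(f,g;k,s)$, opening the possibility of realising $W$ directly by the change of variables that expresses it as an iterated period integral on a modular curve.
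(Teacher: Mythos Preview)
The statement is a \emph{conjecture}; the paper offers no proof, only an explanatory paragraph after it. What you have written is likewise not a proof but an unpacking of the statement into period--matrix language, followed by a candid outline of what a proof would require and why it is out of reach. In that sense there is no gap to diagnose: you correctly recognise the statement as open.

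Your reduction agrees exactly with the paper's own discussion. The paper chooses real bases so that the comparison matrix reads
\[
\begin{pmatrix} \eta^+ & \omega^+ & \alpha \\ i\eta^- & i\omega^- & i\beta \\ 0 & 0 & 1 \end{pmatrix},
\]
with $F^m\mathcal{E}_{dR}$ spanned by the last two columns; the three coefficients of $\mathrm{comp}_{B,dR}(\bigwedge^2 F^m\mathcal{E}_{dR})$ are then $\omega^+$, $i\omega^-$, and $i(\omega^+\beta-\alpha\omega^-)$. In your notation $(P_M)_{11}=\omega^+$, $(P_M)_{21}=i\omega^-$, $\alpha_1=\alpha$, $\alpha_2=i\beta$, so your $W$ is precisely the paper's $i(\omega^+\beta-\alpha\omega^-)$. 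Your observation that $W$ is well-defined only modulo $\Q$--combinations of $(P_M)_{11},(P_M)_{21}$ is the paper's remark that $\alpha$ is only well-defined up to a rational. Both you and the paper identify $W$ as the genuine content of the conjecture.

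One overstatement to correct: you write that ``by the known cases of Deligne's conjecture'' the entries of $P_M$ lie in $R$. The paper is more careful here. It says that the claim $\omega^+, i\omega^-\in R$ \emph{is} Deligne's conjecture for $M$, which for a general rank--two motive of weight $\le -2$ is itself open. So even the length--one part of the inclusion is conjectural, not only the extension period $W$; your reduction therefore rests on a conjecture, not a theorem. Your three--step programme for $W\in R$ is reasonable as heuristics, and your honest assessment that steps two and three are the hard open part is exactly right; the paper does not attempt them either, pointing instead to a worked example in \cite{BrIhara}, \S7.
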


In suitable bases of $\mathcal{E}_B$ and $\mathcal{E}_{dR}$, the comparison isomorphism  has the period matrix 
$$\begin{pmatrix} \eta^+  & \omega^+ & \alpha \\ 
i \eta^- & i \omega^-  & i \beta \\
0 & 0 & 1 
\end{pmatrix} $$
where the top left hand $2\times 2$ square matrix is a period matrix for $M$ and $\eta^{\pm}, \omega^{\pm}, \alpha, \beta$ are real.  The image of the  subspace $ F^m \mathcal{E}_{dR} \subset \mathcal{E}_{dR}$ is spanned by the right-most two columns. 
The conjecture states that $i( \omega^+  \beta - \alpha  \omega^-)$, $\omega^+$, $i \omega^-$ are expressible in terms of single or  double $\ML$-values (for the latter two numbers, this  is Deligne's conjecture). Note that Beilinson's conjecture already relates  the quantity $i \beta$ to  the $L$-function of $M$. The  content of the above conjecture is to explain  $\alpha$, which is only well-defined up to addition of a rational  (since in the above period matrix, one can add  a rational multiple of the third row  to the first). See \cite{BrIhara}, \S7 for an example.

 \subsection{Multiple Dirichlet series}  It is important to point out that the integer values of  multiple $L$-functions are in general  different from  values  of 
`multiple Dirichlet series',
e.g.,  $$\sum_{n_1, \ldots, n_r \geq 1}  \frac{a^{(1)}_{n_1} \ldots a^{(r)}_{n_r}} {n_1^{k_1}(n_1+n_2)^{k_2} \ldots (n_1+\ldots+ n_r)^{k_r}}$$
 where $a^{(i)}_m$ are the coefficients of  Dirichlet series.  Multiple $L$-values are linear combinations of
 $$ \sum_{n_1, \ldots, n_r \geq 1}    a^{(1)}_{n_1} \ldots a^{(r)}_{n_r}  \,  H(n_1,\ldots, n_r)$$
 for some `binding functions' $H$ which are hypergeometric. 
  However, 
 there exist examples where multiple Dirichlet series are in fact totally critical values of multiple $L$-functions    (see, for example,  theorem \ref{thmMZVasML}).   Similarly,  Horozov has defined  multiple Dedekind zeta values associated to  number fields \cite{Horozov}. They  do not appear to be related to the multiple $L$-functions defined in \S\ref{sectMultipleML}, but this does not rule out a  connection between the two kinds of objects.

\section{Example: modular forms of weight  two}  
Let  $N\geq 1$ and let 
$f_1,\ldots, f_r$
be modular forms of weight two  for $\Gamma \leq \Gamma_0(N)$ of finite index.
For simplicity, suppose that they have no poles at the cusps  $\tau = \{0, i \infty\}$.  We can 
assume that the $f_i$ are eigenfunctions for the  involution $\tau\mapsto -\frac{1}{N\tau}$, i.e., 
$$f_i \left( - \frac{1}{N\, \tau  }  \right)  = -  \varepsilon_i  N\,  \tau^2 f_i(\tau)\qquad \hbox{ for all } 1\leq i \leq r\ .$$
The  associated theta functions  are   $\theta_i (t)=   f ( i  t  N^{-1/2} )$ and   satisfy   $\theta_i(t^{-1}) =\varepsilon_i  t^2 \theta_i(t)$. By assumption, the $\theta_i^{\infty}$ vanish for all $i$, so there will be no need to regularise any integrals.   Note that the $f_i$ are allowed to have poles at other cusps besides $0$, $i\infty$. 
 The construction of \S\ref{sectIteratedMellin} gives rise to 
multiple $\ML$-functions satisfying 
 $$\Lambda(\theta_1,\ldots, \theta_r; s_1,\ldots,s_r ) = \varepsilon_1 \ldots \varepsilon_r \, \Lambda(\theta_r,\ldots, \theta_1; 2-s_r, \ldots, 2-s_1)\ ,$$
 and which have a unique totally critical (`central') value
 $$\Lambda(\theta_1,\ldots, \theta_r; 1,\ldots, 1)   \ .$$
 It is related to periods  as follows. Let $\overline{X}_{\Gamma}$ denote the  corresponding compactified modular curve.
 It is convenient to work over a number field  $k\subset \C$ which  contains  $\sqrt{N}$, such   that    $\overline{X}_{\Gamma}$, its cusps,  and we assume,  $ f_1,\ldots, f_r$ are all defined over $k$.
 Let $X_{\Gamma}= \overline{X}_{\Gamma} \backslash D$ where $D$ is the union of   all cusps except those corresponding to  $\tau =\{0, i \infty\}$. When  the $f_i$ are Hecke eigenforms,  the $\theta_i$ are the inverse Mellin transforms of the $L$-functions of the associated submotives  $M_{f_i} \subset H^1(X_{\Gamma}; k)$ which are   eigenspaces for the action of Hecke operators \cite{Scholl} and have rank $1$ or $2$.

 \begin{thm}  The central values 
 $ \pi^r \Lambda(\theta_1,\ldots, \theta_r; 1,\ldots, 1)$ are  periods of a subquotient of the affine ring of the unipotent fundamental groupoid of $X_{\Gamma}/k$ with basepoints given by the image of $0$ and $i\infty$.  More precisely, they are effective periods  of weight and  Hodge filtration $r$  of
 an iterated extension of Tate twists of the pure objects
$H^1(\overline{X}_{\Gamma};k)^{\otimes j}$, where $0\leq j \leq r.$ 
 \end{thm}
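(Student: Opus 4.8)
The plan is to carry the iterated Mellin integral over to the upper half-plane, recognise it there as a homotopy-invariant iterated integral of algebraic one-forms, and then read off the motivic content from the known Hodge theory of the unipotent fundamental groupoid. Concretely, since the $\theta_i^{\infty}$ vanish no regularisation is needed, and I would substitute $\tau_j = i t_j N^{-1/2}$ in the definition
$$\Lambda(\theta_1,\ldots,\theta_r;1,\ldots,1)=\int_{0\leq t_1\leq\cdots\leq t_r<\infty}\theta_1(t_1)\cdots\theta_r(t_r)\,dt_1\cdots dt_r ,$$
using $\theta_j(t_j)\,dt_j=-i\sqrt N\, f_j(\tau_j)\,d\tau_j$ and the fact that the simplex $0\le t_1\le\cdots\le t_r$ maps to the portion of the imaginary axis running from the cusp $0$ towards $i\infty$. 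This yields
$$\pi^{r}\,\Lambda(\theta_1,\ldots,\theta_r;1,\ldots,1)=\Big(\tfrac{-\sqrt N}{2}\Big)^{r}\int_{0}^{i\infty}\omega_1\cdots\omega_r,\qquad \omega_j:=2\pi i\, f_j(\tau)\,d\tau=f_j\,\frac{dq}{q},$$
the right-hand integral being taken along the imaginary geodesic. The scalar prefactor lies in $k^{\times}$, since $\sqrt N\in k$, so it suffices to study $\int_{0}^{i\infty}\omega_1\cdots\omega_r$. Because $\theta_j^{\infty}=0$ forces each $f_j$ to be cuspidal at both $0$ and $i\infty$ (the Fricke eigen-relation exchanges the two cusps), each $\omega_j=(a_1^{(j)}+a_2^{(j)}q+\cdots)\,dq$ is holomorphic along the whole geodesic, including its endpoints; thus this iterated integral converges in the ordinary sense and no tangential base points intervene.

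Next I would observe that each $\omega_j$ is an algebraic one-form on $X_{\Gamma}$ defined over $k$ (holomorphic on $X_{\Gamma}$, with at worst poles along $D$, i.e.\ of the second or third kind relative to $\overline{X}_{\Gamma}$) and, being a one-form on a curve, automatically closed. By Chen's theory of iterated integrals and its comparison with the pro-unipotent completion of the fundamental groupoid, in the form recalled in \cite{MMV}, \S4 (see also \cite{DePi1}, \S15), the functional $\gamma\mapsto\int_{\gamma}\omega_{1}\cdots\omega_{r}$ is the period, against the bar-complex de Rham class $[\omega_1|\cdots|\omega_r]$ and the Betti class of the imaginary geodesic, of an element of the length $\le r$ part of the affine ring $\mathcal O\big(\pi_1^{\mathrm{un}}(X_{\Gamma};0,i\infty)\big)$. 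This affine ring carries a mixed Hodge, indeed motivic, structure over $k$; since the forms, the curve, and the two base points are all defined over $k$, the number $\int_{0}^{i\infty}\omega_1\cdots\omega_r$ is an \emph{effective} period of that object. Moreover each $\omega_j$ is holomorphic, hence lies in $F^1$, so $[\omega_1|\cdots|\omega_r]$ lies in $F^r$ and in the weight-$r$ graded piece for the length filtration, which accounts for the assertion "weight and Hodge filtration $r$".

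Finally I would identify the relevant subquotient as a motive. Using the bar-construction description of the mixed Hodge structure on the unipotent $\pi_1$ (Hain, Morgan; compare \cite{MMV}, \S22), the associated graded of the length $\le r$ quotient of $\mathcal O\big(\pi_1^{\mathrm{un}}(X_{\Gamma};0,i\infty)\big)$ is a subquotient of $\bigoplus_{j=0}^{r}H^1(X_{\Gamma})^{\otimes j}$. The Gysin sequence for $X_{\Gamma}=\overline{X}_{\Gamma}\setminus D$ presents $H^1(X_{\Gamma})$ as an extension of a subobject of $\bigoplus_{c\in D}\Q(-1)$ by $H^1(\overline{X}_{\Gamma})$, so each tensor power $H^1(X_{\Gamma})^{\otimes j}$ is an iterated extension of objects $H^1(\overline{X}_{\Gamma})^{\otimes a}(b)$ with $a+b=j\le r$, that is, of Tate twists of the pure objects $H^1(\overline{X}_{\Gamma})^{\otimes a}$ with $0\le a\le r$. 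Hence the smallest subquotient of $\mathcal O\big(\pi_1^{\mathrm{un}}(X_{\Gamma};0,i\infty)\big)$ carrying our period has exactly the stated shape, and $\pi^{r}\Lambda(\theta_1,\ldots,\theta_r;1,\ldots,1)$ is one of its effective periods of weight and Hodge level $r$. The routine parts are the change of variables and the convergence bookkeeping; the real work is in this last step — keeping track of weights, and of the Tate contributions coming from the removed cusps $D$, and verifying that nothing beyond Tate twists of $H^1(\overline{X}_{\Gamma})^{\otimes j}$, $j\le r$, appears. I would also stress that for $f_i$ of weight $2$ one needs only the ordinary unipotent fundamental groupoid, and not Hain's relative completion, which is what makes this case more transparent than the general modular one.
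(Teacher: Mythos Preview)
Your proof is correct and follows the paper's line closely: the same change of variables to the upper half-plane, the same recognition of $\int_\gamma \omega_1\cdots\omega_r$ as a period of $W_rF^r\mathcal{O}(\pi_1^{\mathrm{un}}(X_\Gamma;x_0,x_\infty))$, and the same use of the Gysin sequence for $H^1(X_\Gamma)$ at the end. The one substantive difference is in how the iterated-extension structure is extracted. The paper invokes Beilinson's geometric construction \cite{DeGo}, realising the length-$r$ iterated integral as a period of the explicit relative cohomology motive $H^r(X_\Gamma^r, Y)$, where $Y$ is the normal-crossing divisor built from the diagonals $\Delta_{i,i+1}$ and the fibres over $x_0,x_\infty$; it then reads off the extension structure from the spectral sequence $E_1^{pq}=\bigoplus H^q(Y_{i_1}\cap\cdots\cap Y_{i_p})$, using that each such intersection is a power of $X_\Gamma$ together with K\"unneth. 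You instead appeal to the bar-construction description of the mixed Hodge structure on $\mathcal{O}(\pi_1^{\mathrm{un}})$ (Hain, Morgan), which gives the associated graded for the length filtration directly as a subquotient of the tensor algebra on $H^1(X_\Gamma)$. Both routes are standard and essentially equivalent; the paper's has the virtue of exhibiting a concrete algebraic variety over $k$ whose relative cohomology carries the period (making the word ``motive'' literal), while yours is shorter and more conceptual but leaves that geometric realisation implicit.
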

 
 \begin{proof} Since all $\theta_i^{\infty}$ vanish, we have the convergent iterated integral representation
 $$(2\pi)^r \Lambda(\theta_1,\ldots, \theta_r; 1,\ldots, 1) =  (2\pi)^r \int_0^{\infty} f_1(i N^{-1/2} t)dt  \ldots f_r(i N^{-1/2} t) dt\ .$$ 
 By changing variables $\tau= i N^{-1/2} t$,  this equals the  iterated integral
 $$ (- \sqrt{N})^{r}  \int_{0}^{i\infty}   (2\pi i  f_1(\tau) d\tau)  \ldots    (2\pi i  f_r(\tau) d\tau)\ . $$
Let  $  \omega_j = 2\pi i  f_j(\tau) d\tau$ for all $j$. Since  $\omega_j \in \Gamma(\overline{X}_{\Gamma},  \Omega^1_{\overline{X}_{\Gamma}}(\log D))$, this integral is proportional by an element in $k^{\times}$ to the iterated integral  on $X_{\Gamma}(\C)$
$$\int_{\gamma} \omega_1 \ldots \omega_r$$
where $\gamma$ is a  geodesic path between the cusps $x_0, x_{\infty}$ defined by the images of  $0, {i\infty}$. It is  a period of the unipotent fundamental groupoid since
 $$\omega_1 \otimes \ldots \otimes  \omega_r \quad \in \quad  W_rF^r \Or( \pi_1^{dR}( X_{\Gamma}/k;x_0,x_{\infty}))$$
 and since  $\gamma$ defines an element 
 $\gamma^B \in \pi_1^{B}(X_{\Gamma}/k;x_0,x_{\infty})(\Q)$.  By Beilinson's geometric construction  \cite{DeGo}, this iterated integral can be interpreted, in any suitable category of realisations,  as a period of  the relative cohomology group
 $H^r(X_{\Gamma}^r, Y)$
 where $Y$ is the normal crossing divisor  (to be slightly modified in the case $x_0=x_{\infty}$, see \cite{DeGo}, \S3) 
 $$\left(\{x_0\} \times  X_{\Gamma}^{r-1} \right) \ \cup \  \Delta_{1,2} \cup \ldots \cup \Delta_{r-1,r} \  \cup \   \left(X_{\Gamma}^{r-1} \times \{x_{\infty}\} \right)$$
 and $\Delta_{i,j}$ denotes the  diagonal in $X_{\Gamma}^r$ where the $i$th and $j$th coordinates coincide.  The statement about extensions follows easily from the  standard  cohomology spectral sequence for relative cohomology which satisfies
 $$E_1^{pq} = \bigoplus_{i_1,\ldots, i_p} H^q(Y_{i_1} \cap \ldots \cap Y_{i_p}) $$
  where  the $Y_i$ are irreducible  components of $Y$.    Since $Y_{i_1} \cap \ldots \cap Y_{i_p}$ is isomorphic to a product $X_{\Gamma}^{r-p}$, it follows from the K\"unneth formula that  $H^r(X_{\Gamma}^r, Y)$ is an iterated extension of tensor products of the objects $H^i(X_{\Gamma})$. These are pure Tate for $i\neq 1 $.  Furthermore,  $H^1(X_{\Gamma})$ is itself an extension of the required type by Gysin:  $$0 \To H^1(\overline{X}_{\Gamma}) \To H^1(X_{\Gamma}) \To H^0(D) (-1)\To H^2(X_{\Gamma})$$
since 
  $H^0(D)(-1)$ is Tate over $k$.    \end{proof}
 
 We claim that the periods in the theorem have coradical filtration $\leq r$, and furthermore if one of the $\theta_i$'s is cuspidal, then this drops to $ \leq r-1$. 

\begin{rem} One can get rid of the $(\sqrt{N})^r$ occuring in this proof by working with $L^*(\theta_1,\ldots, \theta_r)$ instead of $\ML(\theta_1,\ldots, \theta_r)$ via \eqref{LstarversusLambda}. Since algebraic numbers are periods this does not affect the gist of the theorem.  Similar remarks apply in  the next paragraphs, but   it is more  convienent to state our results in terms of the functions  $\ML$.  
\end{rem}

\section{Example: mixed Tate motives over $\Z$}  \label{sectMTZ}

Mixed Tate motives over the integers are one of the few classes of mixed motives whose periods are completely known \cite{MTZ}.
Indeed, as discussed in the first half of this talk \cite{BrIhara},  they are $\Q[(2\pi i)^{-1}]$-linear combinations of multiple zeta values
$$\zeta(n_1,\ldots, n_r) = \sum_{1\leq k_1 <\ldots< k_r}  \frac{1}{k_1^{n_1} \ldots k_r^{n_r}}  \ ,$$
where $n_r\geq 2$.   In this paragraph, we shall show that they are  totally  critical values of multiple $L$-functions derived from  the trivial motive $\Q$.
This is  not the only way in which this could be achieved, but possibly one of the simplest. 

\subsection{A pair of $L$-functions} 
One might hope that  the periods of all mixed Tate motives over $\Z$  arise as  values of the multiple Riemann $\xi$-function. 
I do not know if this is the case. One problem with this function is that there is no integer point which lies within the critical box $0<s<1$ at which it can be evaluated. Instead, we could start with the motivic $L$-function associated to  a direct sum of Tate motives
$\Q(0) \oplus \Q(-1)$.  It is  a product of two Riemann $\xi$-functions $\xi(s) \xi(s-1)$,  whose     functional equation is $s\mapsto 2-s$, but  has a double pole at $s=1$.   Consider  the following variants which have modified Euler factors at the prime $2$, and no pole at $s=1$
\begin{eqnarray}
\Lambda_{+}(s) & = &  \frac{2}{\pi}  (s-1) \, \left(2^s - 2^{2-s}\right)  \xi(s) \xi(s-1)  \nonumber \\
\Lambda_{-}(s)&  =&    - \frac{12}{\pi}  (s-1) \, \left(2^{s-1} - 1\right) \left(1-  2^{1-s}\right)  \xi(s)  \xi(s-1) \nonumber  
\end{eqnarray} 
They are generated from $\xi(s)=\ML(\theta_{\Q},s)$ from the operations $(\mathbf{L1})-(\mathbf{L4})$ and satisfy
$$\Lambda_{\pm}(s) = \varepsilon_{\pm}  \Lambda_{\pm}(2-s)$$
where $\varepsilon_+ =1 $ and $\varepsilon_- =-1$.   The point $s=1$ is critical, since
by the duplication formula \eqref{Duplication}, the above functions  can be expressed in the form
$$ \Lambda_{\pm} (s)   =  \pi^{-s} \Gamma(s)  L_{\pm}(s)  $$
where  $L_{\pm}(s)$ are the following Dirichlet series: 
\begin{eqnarray} 
L_+(s) & = &  8\, \zeta(s)   \left( (1-4^{1-s}) \,\zeta(s-1) \right)\nonumber \\
L_-(s)&  = &  -24\,   \left((1-2^{1-s}) \, \zeta(s) \right)  \left((1-2^{1-s}) \, \zeta(s-1)  \right) \ . \nonumber 
\end{eqnarray} 
The second is  a product $L_{-}(s) = -24 \,L_{\eta}(s) L_{\lambda} (s-1)$ of Dirichlet's functions \eqref{Dirichetlambdaeta}. 
Denote their inverse Mellin transforms by $\theta_\pm(t)$. They satisfy
$$\theta_{\pm} \left( t^{-1}  \right) = \varepsilon_{\pm} t^2 \theta_{\pm}(t)\ . $$
They are generated, perhaps artificially,  from  the theta function of  the trivial motive  $\theta_{\Q}$,  using  operations $(\mathbf{T1})-(\mathbf{T4})$.

\subsection{Totally critical values}
Consider  the associated multiple $\Lambda$-functions 
$$\Lambda(\theta_{\pm}, \ldots, \theta_{\pm}; s_1,\ldots, s_r)$$ and their totally critical  values for $s_1=\ldots = s_r=1$.  

From the expressions above we deduce that:
$$ \pi \, \Lambda(\theta_+;1) =   -8   \log(2)  \quad \ , \quad   \Lambda(\theta_{-}; 1) = 0 \ .  $$
The number $\log(2)$ is  a period of a mixed Tate motive over $\Z[\frac{1}{2}]$ (example \ref{exampleramified}). 

\begin{thm} \label{thmMZVasML} Each  of the 
 $2^r$  multiple $\Lambda$ functions  of length $r$
\begin{equation} \label{criticalMZVvalues}  \pi^r  \Lambda(\underbrace{\theta_{\pm}, \ldots, \theta_{\pm}}_r; \underbrace{1,\ldots, 1}_r) 
\end{equation} 
 can be written as a polynomial in $\log(2)$ whose coefficients are multiple zeta values. This polynomial is   homogenous of weight $r$, where $\log(2)$ has  weight $1$ and the weight of a multiple zeta value is the sum of its arguments. Every multiple zeta value  arises in this way.
 Thus,  the  $\Q$-algebra   generated by the numbers \eqref{criticalMZVvalues}  is equal to  the $\Q[\log(2)]$-algebra generated by  multiple zeta values. \end{thm}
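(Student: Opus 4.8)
The plan is to prove the two inclusions separately. Write $V_{\underline{\varepsilon}}=\pi^r\,\Lambda(\theta_{\varepsilon_1},\ldots,\theta_{\varepsilon_r};1,\ldots,1)$ for a sign pattern $\underline{\varepsilon}\in\{+,-\}^r$, let $\mathcal{A}$ be the $\Q[\log 2]$-algebra generated by all multiple zeta values, and let $\mathcal{B}$ be the $\Q$-algebra generated by all the $V_{\underline{\varepsilon}}$; the goal is $\mathcal{A}=\mathcal{B}$.

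For the inclusion $\mathcal{B}\subseteq\mathcal{A}$ (equivalently: each $V_{\underline{\varepsilon}}$ is a weight-$r$-homogeneous polynomial in $\log 2$ with multiple zeta value coefficients) I would first make $\theta_{\pm}$ explicit. By the duplication formula, as in the passage preceding the theorem, $\theta_{\pm}^0(u)=\sum_{N\ge 1}a_N^{\pm}e^{-\pi N u}$ with $\sum a_N^{\pm}N^{-s}=L_{\pm}(s)$; since $L_+(s)=8(1-4^{1-s})\zeta(s)\zeta(s-1)$ and $L_-(s)=-24(1-2^{1-s})^2\zeta(s)\zeta(s-1)$ are, up to the scalars $8$ and $-24$, the $L$-functions of the weight-two holomorphic Eisenstein forms $\G_2(\tau)-4\G_2(4\tau)$ and $\G_2(\tau)-4\G_2(2\tau)+4\G_2(4\tau)$ on $\Gamma_0(4)$, one has $\theta_{\pm}^0(u)=g_{\pm}(iu/2)$ for such a form $g_{\pm}$ with its (rational) constant term $\theta_{\pm}^{\infty}$ removed. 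Then, exactly as in the proof of the theorem on weight-two modular forms, one uses the operator $\mathcal{R}$ and formula \eqref{LambdaasR} to rewrite $\Lambda(\theta_{\varepsilon_1},\ldots,\theta_{\varepsilon_r};1,\ldots,1)$ as a $\Q$-linear combination of regularised iterated integrals of the Eisenstein differentials of the third kind $\omega_{\pm}=2\pi i\,g_{\pm}(\tau)\,d\tau$ on the genus-zero curve $X_0(4)$, along the geodesic joining the cusps $0$ and $i\infty$, plus the rational boundary integrals $\int_0^1\underline{\theta}^{\infty}_{\varepsilon_r}\cdots\underline{\theta}^{\infty}_{\varepsilon_{i+1}}$ of \eqref{dag}. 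In a Hauptmodul sending the three cusps of $X_0(4)$ to $\{0,1,\infty\}$ each $\omega_{\pm}$ becomes $a_{\pm}\frac{dt}{t}+b_{\pm}\frac{dt}{t-1}$ with $a_{\pm},b_{\pm}\in\Q$, so the principal part of the iterated integral is a $\Q$-combination of shuffle-regularised multiple zeta values of weight $r$; the remaining $\log 2$-terms come from comparing the length-one tangent vector in the $u$-coordinate to the standard tangent vector at the cusp $0$ (whose width for $\Gamma_0(4)$ equals $4=2^2$). Weight homogeneity is visible throughout. One can also bypass the modular picture: expand the iterated integral into multiple Dirichlet series as in \S\ref{sectRelDirichlet}, obtaining after the reduction lemma there $\Q$-linear combinations of level-two Euler sums built from the coefficients of $(1-2^{1-s})\zeta(s)$, $(1-4^{1-s})\zeta(s)$ and $(1-2^{1-s})\zeta(s-1)$, and reduce these to $\mathcal{A}$ by regularised double shuffle, the $\log 2$'s entering through $\lim_{s\to1}(1-2^{1-s})\zeta(s)=\log 2$.

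For the reverse inclusion $\mathcal{A}\subseteq\mathcal{B}$, note that $\log 2=-\tfrac18\pi\Lambda(\theta_+;1)\in\mathcal{B}$, so it suffices to prove by induction on the weight $w$ that every multiple zeta value of weight $w$ lies in $\mathcal{B}$. Granting this for weights $<w$, the products of such values with powers of $\log 2$ already realise, inside $\mathcal{B}$, every decomposable element of $\mathcal{A}$ in weight $w$; hence, using $\mathcal{B}\subseteq\mathcal{A}$, it is enough to show that the $2^w$ numbers $V_{\underline{\varepsilon}}$ with $\underline{\varepsilon}\in\{+,-\}^w$, reduced modulo decomposables, span the space of indecomposable multiple zeta values of weight $w$. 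By the explicit description above, the class of $V_{\underline{\varepsilon}}$ modulo decomposables is a definite $\Q$-linear combination, depending only on $\underline{\varepsilon}$, of the indecomposable classes of the level-two multiple zeta values of weight $w$; one must check that as $\underline{\varepsilon}$ ranges over $\{+,-\}^w$ these $2^w$ combinations span, i.e. exhibit the inverse expressing each $\zeta(n_1,\ldots,n_k)$ with $n_k\ge 2$ and $\sum n_i=w$ as a $\Q$-linear combination of the $V_{\underline{\varepsilon}}$ and of products of lower-weight members of $\mathcal{B}$. The cases $w=1$ (only $\log 2$) and $w=2$ (check that some length-two $V_{\underline{\varepsilon}}$ has nonzero $\zeta(2)$-coefficient) are immediate.

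The main obstacle is exactly this last spanning/inversion step: showing that averaging over the $2^w$ sign patterns suffices to recover all indecomposable multiple zeta values of weight $w$, which forces one to evaluate the level-two multiple Dirichlet sums that occur precisely enough to separate the genuine multiple zeta value content from the $\log 2$-contributions coming both from the modified Euler factors at $2$ and from the tangential-base-point regularisation at the cusp $0$. This is essentially a careful bookkeeping of the double-shuffle and reduction relations of \S\ref{sectRelDirichlet}; everything else — identifying $\theta_{\pm}$, passing to regularised iterated integrals, and checking weight homogeneity — is routine given the formalism already in place.
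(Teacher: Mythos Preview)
Your overall strategy coincides with the paper's: identify $\theta_{\pm}$ with weight-two Eisenstein series on $\Gamma_0(4)$, use the Hauptmodul $X_0(4)\cong\Pro^1\backslash\{0,1,\infty\}$, and read the $\Lambda$-values as regularised iterated integrals of rational combinations of $\frac{dz}{z}$ and $\frac{dz}{1-z}$ between tangential base points. The inclusion $\mathcal{B}\subseteq\mathcal{A}$ goes through essentially as you say.

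The gap is precisely where you flag it, the spanning step for $\mathcal{A}\subseteq\mathcal{B}$, but you misdiagnose its nature. You describe it as ``careful bookkeeping of the double-shuffle and reduction relations'' on level-two Euler sums; the paper needs none of this. The key point you are missing is that the identification is not merely $\omega_{\pm}=a_{\pm}\frac{dz}{z}+b_{\pm}\frac{dz}{z-1}$ for \emph{some} rationals, but exactly
\[
\omega_{-}=\omega_0+\omega_1\ ,\qquad \omega_{+}=\omega_0-\omega_1\ ,
\]
so that $\{\omega_+,\omega_-\}$ is a $\Q$-basis of $H^1_{dR}(\Pro^1\backslash\{0,1,\infty\})$. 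Second, the unit tangent vector at the cusp $i\infty$ is sent by the Hauptmodul to the tangent vector of length $16$ at $z=0$ (and its image under inversion to length $-16$ at $z=1$). Composition of paths along $\gamma_1^{16}\circ\mathrm{dch}\circ\gamma_0^{16}$ then yields the \emph{triangular} relation
\[
(-\pi)^{r}\,\Lambda(\theta_{\varepsilon_1},\ldots,\theta_{\varepsilon_r};1,\ldots,1)\ =\ \int_{\mathrm{dch}}\omega_{\varepsilon_1}\cdots\omega_{\varepsilon_r}\ +\ \log(16)\cdot\big(\hbox{lower weight}\big)\ ,
\]
because the tangent-space integrals contribute only powers of $\log 16$ (both $\omega_{\pm}$ have residue $1$ at $z=0$). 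Since the leading terms, as $\underline{\varepsilon}$ ranges over $\{+,-\}^r$, are related to the standard $\int_{\mathrm{dch}}\omega_{e_1}\cdots\omega_{e_r}$ (all shuffle-regularised MZVs of weight $r$) by an invertible $\Q$-linear change of basis, induction on the weight is immediate, using $\log 16=-\tfrac12\pi\Lambda(\theta_+;1)\in\mathcal{B}$ and the shuffle product for $\Lambda$. No Dirichlet-series manipulation or double-shuffle input is required; your alternative route through \S\ref{sectRelDirichlet} would be substantially harder and is not what the paper does.
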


\begin{proof} The projective line minus 3 points admits a modular parametrization
$$z:  X_0(4) \overset{\sim}{\To} \Pro^1\backslash \{0,1,\infty\}\ ,$$
where $X_0(4)$ is the quotient of the upper half plane by $\Gamma_0(4)$ and 
\begin{equation} \label{qexpansionofz} z= \left( \frac{\theta_2(\tau)}{\theta_3(\tau)} \right)^4 =16\, q  -128\, q^2 +704\,q^3 -3072\, q^4+\ldots 
\end{equation}
using notation \eqref{thetanull}.
Consider the one forms $\omega_0 = \frac{dz}{z}$ and $\omega_1 = \frac{dz}{1-z}$ on $\Pro^1\backslash \{0,1,\infty\}$.
 By computing finitely many Fourier coefficients, one finds  that 
  $$ \omega_{-} :=  \omega_0 + \omega_1 =   2 \pi i\, \theta_{-}(2 \tau) \, d\tau    $$
  $$ \omega_{+}:= \omega_0  -  \omega_1 =   2 \pi i\, \theta_{+}(2 \tau) \, d\tau    $$
where  $\theta_{\pm}(2\tau)$ are the modular forms of weight $2$ for $\Gamma_0(4)$ given by 
\begin{multline} \label{thetaplusminus} 
\qquad \qquad \theta_+(2\tau)  \  =    \  \theta^4_3 \ = \  8 \, \G_2 (q) - 32\, \G_2(q^4) \\
=1+ 8 \,q +  24 \, q^2+ 32\, q^3 +24 \, q^4 + 48 \, q^5 + \ldots  \qquad 
 \end{multline} 
 \begin{multline} \nonumber 
\qquad \qquad\theta_- (2\tau) \  = \  2 \, \theta_4^4 -  \theta_3^4    \ = \   -24 \, \G_2(q)+96\, \G_2(q^2)- 96\,  \G_2(q^4)  \\
=  1 - 24 \,q +  24 \, q^2 -96\, q^3 +24 \, q^4  -144\, q^5+  \ldots  \qquad  
 \end{multline} 
 Note that $\omega_{-}$ (respectively $\omega_+$) is invariant (resp. anti-invariant) under the involution $z\mapsto 1-z$, which corresponds to the reflection formula $t\mapsto t^{-1}$ in the Mellin variable $ t= \mathrm{Im} ( 2 \tau )$. 
 It follows from the $q$-expansion  \eqref{qexpansionofz} that the unit tangent vector at  the cusp $i \infty$
corresponds to the tangent vector   
 $$16 \frac{\partial}{\partial z}  =  \frac{\partial}{\partial q}\ ,$$
of length $16$ on $\Pro^1\backslash \{0,1,\infty\}$ at the origin.  Similarly,  the image of $\frac{\partial}{\partial q}$ under $t\mapsto t^{-1}$ corresponds to the tangent vector of length $-16$ at $z=1$. It follows from the definition of a multiple $\ML$-value of length $\ell$ as an iterated integral and functoriality that 
\begin{equation}  \label{inprooflambdatheta}
(-\pi)^{\ell} \, \Lambda(\theta_{\pm}, \ldots, \theta_{\pm};1,\ldots, 1) =   \int^{-\overset{\To}{16}_1}_{\overset{\To}{16}_0} \omega_{\pm} \ldots \omega_{\pm} 
\end{equation}  where $\overset{\To}{16}_0$ is the tangent vector of length $16$ at $0$, etc, and the path of integration is 
$$\mathrm{dch}_{16} =    \gamma^{16}_1  \circ \mathrm{dch}\circ   \gamma_0^{16}$$ 
where $\gamma^{16}_0$ is a path inside the tangent space  $T_0 \Pro^1$ from $16$ to $1$, and $\gamma^{16}_1$ is a path inside $T_1 \Pro^1$ from $1$ to $16$. 
By  composition of paths, we deduce that \eqref{inprooflambdatheta} is 
\begin{equation}  \label{composition16}
\sum_{0\leq i \leq j \leq \ell}  \int_{\gamma^{16}_1} \underbrace{ \omega_{\pm} \ldots \omega_{\pm}}_{i}  \times \int_{\mathrm{dch}}  \omega_{\pm} \ldots \omega_{\pm} \times  \int_{\gamma_1^{16}}  \underbrace{\omega_{\pm} \ldots  \omega_{\pm}}_{\ell-j} \ . 
\end{equation}
Since the residues of $\omega_{+}, \omega_{-}$ at zero both equal $1$, the leftmost  integrals reduce to 
$$ \int_{\gamma^{16}_1} \underbrace{ \omega_{\pm} \ldots \omega_{\pm}}_{i}  = \int_{16}^1  \omega_0 \ldots \omega_0 = \frac{ \left(- \log(16)\right)^i}{i!} \ .$$
These integrals take place on the punctured tangent space $\G_m$ of $\Pro^1$ at $0$.  The right-most integrals  of \eqref{composition16} likewise give powers of $\log(2)$, but this time with a sign depending on the integrand (equal to the number of $\omega_{+}$'s). We conclude that  
$$  \pi^{\ell} \, \Lambda(\theta_{\pm}, \ldots, \theta_{\pm};1,\ldots, 1)=  \sum_{0 \leq i \leq j \leq \ell}   \pm 
\frac{\log^{i}(16)}{i!} \frac{\log^{\ell-j}(16)}{(\ell-j)!} \int_{\mathrm{dch}}  \omega_{\pm} \ldots \omega_{\pm}\ , $$
where every  middle integral in  \eqref{composition16} is a period of $\pi^{\mathfrak{m}}_1(\Pro^1\backslash \{0,1,\infty\}, \tone_0, -\tone_1)$ with respect to $\mathrm{dch}$,  which are  multiple zeta values.  Furthermore,   to leading order, 
$$(- \pi)^{\ell} \,  \Lambda(\theta_{\pm}, \ldots, \theta_{\pm};1,\ldots, 1) =  \int_{\mathrm{dch}}  \omega_{\pm} \ldots \omega_{\pm} + \log (16) \Big( \cdots\Big)
$$
where the term  in brackets only involves multiple zeta values of lower weight and powers of $\log(16)$. Since  $\omega_{\pm}$ generate a basis for $H^1_{dR}( \Pro^1\backslash \{0,1,\infty\};\Q)$,  and $2 \log(16) = \pi \Lambda(\theta_{+};1)$, we can  use the shuffle product formula for multiple $\ML$-values and induction  to conclude that every multiple zeta value of weight $w$ can be written  as a $\Q$-linear combination of multiple $\ML$-values 
 $\ML(\theta_{\pm}, \ldots, \theta_{\pm};1,\ldots, 1)$ of length $w$. 
  \end{proof}
 
 For example,
 \begin{eqnarray}
 \pi^2 \,  \Lambda(\theta_{-}, \theta_{+};1,1) & = &   2 \,\zeta(2)- \frac{2}{2!} \log(16)^2  \nonumber \\
 \pi^3 \,  \Lambda(\theta_{-}, \theta_{-}, \theta_{+};1,1,1) & = &  4 \, \zeta(3)  + 2  \log(16) \,  \zeta(2)   -\frac{2}{3!}  \log(16)^3    \nonumber 
 \end{eqnarray} 
 
\begin{rem}
 We showed that   \eqref{criticalMZVvalues} generate all  periods of 
$\pi_1^{\mm}(\Pro^1\backslash \{0,1,\infty\}, \tone_0, -\tone_1)$, which in turn 
 generates all  mixed Tate motives over $\Z$,  by the motivic version of the Deligne-Ihara conjecture \cite{MTZ, BrIhara}.  Therefore all periods of mixed Tate motives over $\Z$ can be expressed as totally critical values of multiple $\ML$-functions. The point is to interpret certain linear combinations of $\frac{dz}{z}$ and $\frac{dz}{1-z}$ as  inverse Mellin transforms of  $L$-functions.
\end{rem}

\subsection{Alternative approaches} One can also   construct all periods of $\MT(\Z)$  as iterated Eisenstein integrals, via Saad's theorem \cite{BrIhara}.
Here is a slightly different  approach. 
Suppose we wish to construct the periods of a bi-extension of $\Q$, $\Q(-3)$, $\Q(-12)$.   By  \cite{BrIhara} \S3, this problem is fiendish when expressed using multiple zeta values. However, the following ad-hoc argument seems to work. The $\ML$-functions of its weight graded pieces are $\xi(s)$, $\xi(s-3)$, $\xi(s-12)$.   By convoluting $\xi(s)$ and $\xi(s-3)$ (respectively $\xi(s-3)$, $\xi(s-12)$), one can obtain $\Lambda(\G_4;s)$ and $\Lambda(\G_{10};s-3)$  as in  example \ref{ExampleEis}. Their critical values capture the periods of extensions of $\Q(-3)$ by $\Q$,  and $\Q(-12)$ by $\Q(-3)$. From these, we obtain  the function $\Lambda(\G_4, \G_{10};s_1,s_2)$.  The combination of totally critical values $\Lambda(\G_4,\G_{10};1,1) - \frac{2520}{691} \Lambda(\G_4,\G_{10};3,5)$  supplies the remaining period (\cite{BrIhara},  \S7.2).

\section{Multiple Riemann $\xi$-function} \label{sectMultipleXi}
The simplest possible example of an iterated Mellin transform is  where all $\theta_i= \theta_{\Q}$ are associated to the trivial motive $\Q= H^0(\mathrm{Spec}\, \Q)$. 

\begin{defn} Define
the multiple Riemann $\xi$-function  to be 
$$\xi(s_1,\ldots, s_r)= \Lambda(\theta_{\Q}, \ldots, \theta_{\Q};s_1,\ldots, s_r) \ .$$
It reduces to the classical Riemann $\xi$-function when $r=1$.
\end{defn}

 Theorem \ref{thmRiemannxi} follows easily from 
theorem \ref{thmLambdaMainProperties},
 together with some simple calculations of residues along the lines of example \ref{exDoubleXi}. 
 
 \begin{rem} The functions $\xi(s_1,\ldots, s_r)$ should not be confused with the multiple zeta functions which are defined for large $\mathrm{Re}(s_i)$ by: 
$$\zeta(s_1,\ldots, s_r) = \sum_{1\leq k_1< \ldots <k_r} \frac{1}{k_1^{s_1} \ldots k_r^{s_r}}\ ,$$
and were essentially first defined by Euler. The proof of their meromorphic continuation to $\C^r$  is much more recent \cite{Zhao}.   They have poles along infinitely many hyperplanes, but a  functional equation valid for all $s_i$ is not  known to my knowledge.  The functions $\xi(s_1,\ldots, s_r)$ and $\zeta(s_1,\ldots, s_r)$ are not related  in any obvious way when $r>1$.
\end{rem}

The positive critical values of the Riemann zeta function are  even integers. Therefore the totally critical positive values of $\xi(s_1,\ldots, s_r)$ 
are also the even integers.

\subsection{Totally even values and multiple quadratic sums}  We can express the totally even positive values of $\xi(s_1,\ldots, s_r)$ in terms 
of the following quantities.

 \begin{defn} For any integers $k_1,\ldots, k_r\geq 1$ define the \emph{multiple quadratic sum}: 
 $$Q(k_1,\ldots, k_r) = \sum_{n_1,\ldots, n_r\geq 1}  \frac{1}{  (n_1^2+\ldots + n_r^2)^{k_1}  \ldots  (n_{r-1}^2+n_{r}^2)^{k_{r-1}} (n_r^2)^{k_r}  }\ .$$
 It converges.  Let us call $2k_1+\ldots +2k_r$ the weight, and  $r$ the depth. 
 \end{defn} 
 
 If $k_1\geq 2$, and  we replace every   exponent  $2$ with a $1$, we obtain
 $$ \sum_{n_1,\ldots, n_r\geq 1}  \frac{1}{  (n_1+\ldots + n_r)^{k_1}  \ldots (n_{r-1}+n_{r})^{k_{r-1}}  n_r^{k_r} }= \sum_{ m_1>\ldots > m_r \geq 1} \frac{1}{m_1^{k_1} m_2^{k_2} \ldots m_r^{k_r}}$$
 which is nothing other than a multiple zeta value  $\zeta(k_r, k_{r-1},\ldots, k_1)$.  

\begin{thm} Let $\ell_i$ be integers $\geq 1$, and set $\ell = \ell_1+\ldots+  \ell_r$. Then 
every totally even  multiple $\xi$-value $ \pi^{\ell} \xi(2\ell_1,\ldots, 2\ell_r)$  is  a $\Q$-linear combination of  multiple quadratic sums $Q(k_1,\ldots, k_p)$ of weight $2\ell$ and depth $\leq r$, i.e., $k_1+ \ldots+k_p= \ell$, and $p \leq r.$
\end{thm}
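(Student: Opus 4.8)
The plan is to compute $\xi(2\ell_1,\ldots,2\ell_r)$ straight from its definition as a regularised iterated Mellin transform, to reduce it to a $\Q$-linear combination of ``pure'' iterated integrals of $\theta_\Q^0(t)=2\sum_{n\geq 1}e^{-\pi n^2t^2}$, and then to expand the Gaussians and recognise the resulting lattice sums as multiple quadratic sums. First I would note that $(2\ell_1,\ldots,2\ell_r)$ lies off all the pole hyperplanes of Theorem~\ref{thmRiemannxi}, since $s_1+\cdots+s_k=2(\ell_1+\cdots+\ell_k)\geq 2k>k$ and $s_k+\cdots+s_r\geq 2>0$ there; hence $\xi(2\ell_1,\ldots,2\ell_r)$ is a genuine value and equals the absolutely convergent integral
\[
\xi(2\ell_1,\ldots,2\ell_r)=\int_0^\infty \mathcal{R}\bigl(\underline{\theta}_\Q(2\ell_1)\cdots\underline{\theta}_\Q(2\ell_r)\bigr),\qquad \theta_\Q=\theta_\Q^0+1.
\]
Expanding $\mathcal{R}$ together with the decomposition $\theta_\Q=\theta_\Q^0+1$ rewrites the right-hand side as a finite $\Z$-linear combination of iterated integrals $\int_{0\leq t_1\leq\cdots\leq t_r}\prod_j\omega_j(t_j)$, where each $\omega_j$ is either a $\theta_\Q^0$-form $\theta_\Q^0(t)t^{a_j-1}dt$ or a monomial form $t^{a_j-1}dt$, and $(a_1,\ldots,a_r)$ is a permutation of $(2\ell_1,\ldots,2\ell_r)$. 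The recursive formula for $\mathcal{R}$ forces the outermost of these forms to be a $\theta_\Q^0$-form, so each integral converges at $t=\infty$; convergence at $t=0$ holds because $\theta_\Q^0(t)=t^{-1}+O(1)$ there and all exponents are $\geq 2$.

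Next I would integrate out the monomial forms one at a time, innermost first. An innermost monomial form is absorbed into the form above it via $\int_0^{t}t'^{\,a-1}dt'=t^{a}/a$; an interior one is removed by integration by parts, which replaces it by two iterated integrals, each with one monomial form fewer, in which its exponent $a$ has been transferred onto the form immediately below, respectively immediately above, it, at the cost of a factor $\pm 1/a$. At every step the number of monomial forms strictly decreases, the sum of all exponents is conserved, and every exponent in play remains a sum of distinct $2\ell_i$'s, hence positive, even, and $\geq 2$; so the factors $1/a$ are finite and nonzero. The procedure therefore terminates, writing $\xi(2\ell_1,\ldots,2\ell_r)$ as a $\Q$-linear combination of pure integrals
\[
J(c_1,\ldots,c_m):=\int_{0\leq u_1\leq\cdots\leq u_m}\ \prod_{l=1}^m \theta_\Q^0(u_l)\,u_l^{2c_l-1}\,du_l,\qquad m\leq r,
\]
with each $c_l\geq 1$ and $\sum_{l=1}^m 2c_l=\sum_j a_j=2\ell$, that is $\sum_l c_l=\ell$.

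Finally I would evaluate $J(c_1,\ldots,c_m)$. Substituting $v_l=u_l^2$ and expanding $\theta_\Q^0$ into Gaussians gives $J(c_1,\ldots,c_m)=\sum_{n_1,\ldots,n_m\geq 1}\int_{0\leq v_1\leq\cdots\leq v_m}\prod_l e^{-\pi n_l^2 v_l}v_l^{c_l-1}dv_l$, the interchange being legitimate by Tonelli. In the inner integral the substitution $w_l=v_l-v_{l-1}$ (with $v_0=0$) turns the exponential into $\exp\!\bigl(-\pi\sum_l (n_l^2+\cdots+n_m^2)w_l\bigr)$ and each $v_l^{c_l-1}$ into $(w_1+\cdots+w_l)^{c_l-1}$; expanding the multinomials (of total degree $\ell-m$) and integrating over $w_l\geq 0$ produces a $\Z_{\geq 0}$-linear combination, with coefficients built from binomials and factorials, of terms $\pi^{-\ell}\prod_l (n_l^2+\cdots+n_m^2)^{-(f_l+1)}$ with $\sum_l f_l=\ell-m$. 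Summed over $n_1,\ldots,n_m\geq 1$, each such term is precisely $\pi^{-\ell}Q(f_1+1,\ldots,f_m+1)$, a multiple quadratic sum of weight $2\sum_l(f_l+1)=2\ell$ and depth $m\leq r$. Putting everything together, $\pi^\ell\xi(2\ell_1,\ldots,2\ell_r)$ is a $\Q$-linear combination of multiple quadratic sums $Q(k_1,\ldots,k_p)$ with $k_1+\cdots+k_p=\ell$ and $p\leq r$, as asserted; for $r=1$ this recovers $\pi^\ell\xi(2\ell)=(\ell-1)!\,\zeta(2\ell)=(\ell-1)!\,Q(\ell)$.

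The hard part is the bookkeeping in the monomial-elimination step: one must control the combinatorics of the $\mathcal{R}$-expansion and of the integration by parts and check that redistributing exponents among the $\theta_\Q^0$-forms always produces even exponents $\geq 2$ and never a vanishing denominator — which is exactly what the avoidance of the pole hyperplanes guarantees. A cleaner way to organise this step is an induction on $r$ via formula \eqref{LambdaasR}: peel off the ``top'' term $J(\ell_1,\ldots,\ell_r)=\int_{0\leq t_1\leq\cdots\leq t_r}\prod_l\theta_\Q^0(t_l)t_l^{2\ell_l-1}dt_l$, which is handled by the last step with $m=r$, and rewrite the remainder — using path composition, path reversal and the inversion formula for $\theta_\Q$ exactly as in the proof of Proposition~\ref{prop: propertiesnopoles} — as a $\Q$-linear combination of totally even multiple $\xi$-values of strictly smaller depth, which are of the required shape by the inductive hypothesis.
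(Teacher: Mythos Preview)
Your argument is correct and follows essentially the same two-step strategy as the paper: first reduce $\xi(2\ell_1,\ldots,2\ell_r)$ via the regularisation operator to a $\Q$-linear combination of the ``pure'' integrals $D(2n_1,\ldots,2n_p)=J(n_1,\ldots,n_p)$ with $p\leq r$ and $\sum n_i=\ell$, then evaluate each such integral as a combination of multiple quadratic sums. The only difference is cosmetic: for the second step the paper integrates the simplex from the outside in using the identity $\int_v^\infty e^{-\pi m^2 u}u^{\ell-1}\,du = P_\ell(\pi m^2 v)\,m^{-2\ell}\pi^{-\ell}e^{-\pi m^2 v}$, whereas you decouple the simplex by the substitution $w_l=v_l-v_{l-1}$ and a multinomial expansion; and for the first step you spell out the monomial-elimination explicitly while the paper simply invokes the tangential-basepoint formalism.
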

\begin{proof} Assume that all  $\mathrm{Re}(s_i)>1$, and   define 
\begin{equation}\label{Ds1srdefn} 
D(s_1,\ldots, s_r) = \int_{0 \leq t_1 \leq \ldots  \leq t_r \leq \infty} \theta^0_{\Q}(t_1) t_1^{s_1-1} dt_1 \ldots  \theta^0_{\Q}(t_r) t_r^{s_r-1} dt_r \ .
\end{equation} 
Expanding out the theta functions and exchanging summation and integration gives 
 $$  2^r \sum_{m_1,\ldots, m_r\geq 1} \int_{0\leq t_1 \leq \ldots \leq t_r \leq \infty}  e^{ -\pi (m_1^2t_1^2+\ldots + m_r^2t_r^2)} t_1^{s_1-1} \ldots t_r^{s_r-1} dt_1 \ldots dt_{r}\ . $$
Now write  $s_i= 2\ell_i$. After changing variables $u_i = t_i^2$ this reduces to
 $$  \sum_{m_1,\ldots, m_r\geq 1} \int_{0\leq u_1 \leq \ldots \leq u_r \leq \infty}  e^{ -\pi (m_1^2u_1+\ldots + m_r^2u_r)} u_1^{\ell_1-1} \ldots u_r^{\ell_r-1} du_1 \ldots du_{r}\ . $$
 For any integer $\ell\geq 1$ one has the following identity
 $$\int_{v}^{\infty} e^{-\pi m^2 u} u^{\ell-1} du = \frac{P_{\ell} ( \pi m^2 v)}{ m^{2\ell} \pi^{\ell}} e^{-\pi m^2 v}$$
 where $P_{\ell}$ is a polynomial with integer coefficients of degree $\ell-1$.  If one applies it to the previous integral and integrates out the variables $u_{r}, u_{r-1}, \ldots, u_1$ in turn,  one obtains  an integer linear combination of terms of the form
 $$  \frac{  \pi^{- \ell }}   { ( m_r^2)^{a_r}  ( m_r^2+m_{r-1}^2)^{a_{r-1}} \ldots( m_r^2+m_{r-1}^2+\ldots + m_1^2)^{a_1}  }
 $$
 where $\ell= \ell_1+\ldots + \ell_r = a_1 + \ldots + a_r $ 
 and $1\leq a_r \leq \ell_r$, $1\leq a_{r-1} \leq \ell_r+\ell_{r-1}-1$, 
 $1\leq a_{r-2} \leq \ell_r+\ell_{r-1}+\ell_{r-2}-2$,  and so on. 
 
 This shows that
 $ \pi^{r} D(2 \ell_1,\ldots, 2 \ell_r)$ is an integer  linear combination of 
 $Q(k_1,\ldots, k_r)$  where $k_1+ \ldots+k_r= \ell$.
 To conclude, apply  the definition of the regularised iterated integral with respect to a tangential base point  \S\ref{sectWithPoles} to express $\xi(2\ell_1,\ldots, 2 \ell_r)$ as an isobaric rational linear combination of $ D(2n_1,\ldots, 2n_p)$
 for $p\leq r$.
 \end{proof} 

\begin{example} 

Following the procedure in the previous proof, we find that
$$ D(2\ell_1,2\ell_2)   =   \frac{(\ell_1-1)! (\ell_2-1)! }{ \pi^{\ell_1+\ell_2}} \,  \sum_{k=0}^{\ell_2-1} \binom{\ell_1+k-1}{k} Q(\ell_1+k, \ell_2-k) $$
for $\ell_1,\ell_2 \geq 1$ integers.  In particular, we have:
\begin{eqnarray} \pi^{2} \, D(2,2) \quad   =  &Q(1,1)  \quad =& \sum_{m,n\geq 1}  \frac{1}{(m^2+n^2) n^2}   \quad = \quad \frac{\pi^4}{72}\nonumber \\
 \pi^3\, D(2,4) \quad = &Q(1,2) +Q(2,1)  \quad =  &  \sum_{m,n\geq 1}  \frac{1}{ (m^2+n^2)n^4} +  \frac{1}{ (m^2+n^2)^2 n^2} \nonumber \\ 
 \pi^3\, D(4,2) \quad  = &Q( 2,1)  \quad = &  \sum_{m,n\geq 1}  \frac{1}{ (m^2+n^2)^2 n^2} \nonumber 
 \end{eqnarray} 
 Formula \eqref{lambdain2vars} implies that 
 $$\xi(s_1, s_2 ) = D(s_1, s_2) + \left( \frac{1}{s_1} - \frac{1}{s_2} \right) \xi(s_1+s_2)\ ,$$
which enables us to deduce a formula for $\xi(2\ell_1,2 \ell_2)$. 
For instance, we find that
$$   \frac{\pi^{\ell+1}}{(\ell-1)!} \,  \xi(2\ell , 2) \ =  \left(\sum_{m,n\geq 1}  \frac{1}{(m^2+n^2)^\ell n^{2} }\right)+ \left( \frac{1 -\ell}{2 }  \right)  \left(\sum_{m\geq 1}  \frac{1}{m^{2\ell+2}}\right) $$
 for all values of $\ell\geq 1$.
Conversely, every multiple quadratic  sum of depth $\leq 2$ can be expressed in terms of totally even single and double Riemann $\xi$-values.
 \end{example}

\subsection{Double $\xi$-function and real analytic Eisenstein series}
The function $\xi(s_1,s_2)$  turns out to be a partial Mellin transform of a real analytic Eisenstein series,  which  is defined for $\mathrm{Re} (s)>1$, and $z$ in the upper half plane  by 
 $$ E(z,s) =\frac{1}{2} \sum_{(m,n) \neq (0,0)}  \frac{ y^s }{ |m+n z|^{2s}}  $$ 
 where $y= \mathrm{Im}( z)$. 
Denote its completed version by 
$$\EE(z,s) =  \Gamma_{\R}(2s)  E(z,s)\  $$
where we recall that $\Gamma_{\R}(2s) =  \pi^{-s} \Gamma(s)$. 
It admits a meromorphic continuation to $\C$ with simple poles at $s=0,1$, and satisfies the functional equation
\begin{equation} \label{FneqnforRealEis}
\EE(z,s)= \EE(z,1-s)\ . 
\end{equation} 
Its asymptotic behaviour as $z \mapsto i \infty$ is given by its zeroth Fourier coefficient
$$\EE^{\infty}(z,s)  =  \xi(2s) y^s + \xi(2s-1) y^{1-s} \ .$$
Set 
$\EE^0(z,s) = \EE(z,s) - \EE^{\infty}(z,s)$. With $\tone_{\infty}$ denoting the unit tangent vector at $i\infty$ on the upper half plane, 
a  mild generalisation of   \S\ref{sectIteratedMellin} gives:
$$  \int_{i}^{\tone_{\infty}}  \EE (z, s)  y^{t} \frac{dz}{z}  =   \int_i^{i\infty}  \EE^0 (z,s) y^{t} \frac{dz}{z} - \int_0^1 \EE^{\infty}(z;s)(y)\,   y^{t} \frac{dy}{y} \ . $$
The first integral on the right-hand side converges for all $t$.

\begin{thm}  We have the regularised Mellin transform formula
\begin{equation} \label{xiformula} \xi(2 s_1, 2 s_2) =  \int_{i}^{\tone_{\infty}}  \EE(z, s_1+s_2)  y^{s_2- s_1} \frac{dz}{z}\ .
\end{equation} 
The right-hand side is by definition the ordinary integral
\begin{equation}\label{xiformula2} \int_i^{i \infty}  \EE^0 (z,s_1+s_2) y^{s_2-s_1} \frac{dz}{z}  \ - \   \frac{\xi(2s_1+2s_2)}{2s_2}   \ - \   \frac{\xi(2s_1+2s_2-1)}{1-2s_1}  \ . 
\end{equation}
The integral on the left  admits an analytic continuation to $\C^2$. 
\end{thm}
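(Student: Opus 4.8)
The plan is to reduce the statement to the length-two formula \eqref{lambdain2vars} for $\ML(\theta_{\Q},\theta_{\Q};s_1,s_2)$ and to recognise the completed real analytic Eisenstein series $\EE$ inside the resulting double Mellin integral. First note that the equivalence of \eqref{xiformula} with \eqref{xiformula2} is pure bookkeeping: by the displayed generalisation of \S\ref{sectIteratedMellin}, the right-hand side of \eqref{xiformula}, namely $\int_{i}^{\tone_{\infty}}\EE(z,s_1+s_2)y^{s_2-s_1}\frac{dz}{z}$, means $\int_{i}^{i\infty}\EE^0(z,s_1+s_2)y^{s_2-s_1}\frac{dz}{z}-\int_0^1\EE^{\infty}(z,s_1+s_2)(y)\,y^{s_2-s_1}\frac{dy}{y}$, and inserting $\EE^{\infty}(z,s)=\xi(2s)y^s+\xi(2s-1)y^{1-s}$ and computing the elementary integrals $\int_0^1 y^{2s_2}\frac{dy}{y}=\frac1{2s_2}$, $\int_0^1 y^{1-2s_1}\frac{dy}{y}=\frac1{1-2s_1}$ produces exactly the two rational correction terms in \eqref{xiformula2}. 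So it suffices to prove \eqref{xiformula} for $(s_1,s_2)$ in a suitable open region of $\C^2$ and then to conclude by meromorphic continuation.

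For the main identity I would start from \eqref{lambdain2vars} with $\theta_1=\theta_2=\theta_{\Q}$, $\theta_{\Q}^{\infty}=1$, $\theta_{\Q}^0(t)=2\sum_{n\geq1}e^{-\pi n^2 t^2}$. The second term of \eqref{lambdain2vars} integrates immediately to $\xi(2s_1+2s_2)/(2s_2)$, so the task reduces to the first term $A=\int_{0\leq t_1\leq t_2\leq\infty}\theta_{\Q}(t_1)t_1^{2s_1-1}\theta_{\Q}^0(t_2)t_2^{2s_2-1}\,dt_1\,dt_2$. Substituting $t_1=t_2 u$ gives $A=\int_0^1 u^{2s_1-1}\Bigl(\int_0^\infty\theta_{\Q}^0(t)\theta_{\Q}(tu)\,t^{2(s_1+s_2)-1}\,dt\Bigr)du$; then expanding both theta series, interchanging summation and integration (legitimate for $\mathrm{Re}(s_1)>\tfrac12$ and $\mathrm{Re}(s_1+s_2)>1$), and using $\int_0^\infty e^{-\pi N t^2}t^{2\sigma-1}\,dt=\tfrac12\Gamma(\sigma)(\pi N)^{-\sigma}$, the inner integral becomes $\frac{\Gamma(s_1+s_2)}{\pi^{s_1+s_2}}\sum_{n\geq1,\,m\in\Z}(n^2+m^2u^2)^{-(s_1+s_2)}$. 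The substitution $v=1/u$ together with the identity $E(iv,s)=\zeta(2s)v^s+v^s\sum_{n\geq1,\,m\in\Z}(m^2+n^2v^2)^{-s}$ (separate the $n=0$ term) and $\EE(z,s)=\pi^{-s}\Gamma(s)E(z,s)$ then turns $A$ into $\int_1^\infty\bigl(\EE(iv,s_1+s_2)\,v^{s_2-s_1-1}-\xi(2s_1+2s_2)\,v^{2s_2-1}\bigr)dv$.

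Inserting $\EE=\EE^{\infty}+\EE^0$ with $\EE^{\infty}(iv,s)=\xi(2s)v^s+\xi(2s-1)v^{1-s}$, the part $\EE^{\infty}(iv,s_1+s_2)v^{s_2-s_1-1}$ equals $\xi(2s_1+2s_2)v^{2s_2-1}+\xi(2s_1+2s_2-1)v^{-2s_1}$; the first summand cancels the subtracted term in the integrand, leaving $A=\int_1^\infty\EE^0(iv,s_1+s_2)v^{s_2-s_1}\frac{dv}{v}+\xi(2s_1+2s_2-1)\int_1^\infty v^{-2s_1}dv=\int_1^\infty\EE^0(iv,s_1+s_2)v^{s_2-s_1}\frac{dv}{v}-\frac{\xi(2s_1+2s_2-1)}{1-2s_1}$, the last step requiring $\mathrm{Re}(s_1)>\tfrac12$. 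Combining with $\xi(2s_1,2s_2)=A-\xi(2s_1+2s_2)/(2s_2)$ yields \eqref{xiformula2} on that region. Finally, since $\EE^0(iv,s)$ decays exponentially as $v\to\infty$ (its nonconstant Fourier coefficients are $K$-Bessel functions) and is meromorphic in $s$, the integral $\int_1^\infty\EE^0(iv,s_1+s_2)v^{s_2-s_1}\frac{dv}{v}$ is meromorphic on $\C^2$ with poles only along $s_1+s_2\in\{0,1\}$; hence the whole right-hand side of \eqref{xiformula2} is meromorphic on $\C^2$, equals $\xi(2s_1,2s_2)$ by uniqueness of continuation, and therefore furnishes the claimed analytic continuation, consistently with theorem \ref{thmRiemannxi} (the spurious poles of the two rational terms along $s_1+s_2=\tfrac12$ cancel, since there $2s_2=1-2s_1$).

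The step I expect to be the main obstacle is the middle one: tracking the exponents carefully through the substitutions $t_1=t_2u$ and $v=1/u$ so that the Eisenstein series emerges restricted to $v\geq1$ and paired against exactly $v^{s_2-s_1}\tfrac{dv}{v}$, and singling out a nonempty region (for instance $\mathrm{Re}(s_1)>\tfrac12$, $\mathrm{Re}(s_2)>0$, $\mathrm{Re}(s_1+s_2)>1$) in which the term-by-term expansion, the interchange of summation and integration, and the elementary $v$-integrals are simultaneously justified before continuing. The exact cancellation of the $\zeta(2s)v^s$ head of the Eisenstein Fourier expansion against the $n=0$ contribution is the clean incarnation of the tangential-base-point regularisation that makes the right-hand side of \eqref{xiformula} come out exactly as written.
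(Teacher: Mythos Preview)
Your proof is correct and follows essentially the same route as the paper: starting from \eqref{lambdain2vars}, separating the ratio $t_2/t_1$ from a radial variable, integrating out the radial variable to produce the Eisenstein sum, and then peeling off the constant Fourier term to account for the regularising corrections. The only cosmetic differences are that the paper uses the single change of variables $(t_1,t_2)\mapsto(\lambda,y)=(t_1,t_2/t_1)$ in place of your two-step substitution $(t_1,t_2)\mapsto(t_2,u)=(t_2,t_1/t_2)$ followed by $v=1/u$ (your $v$ is the paper's $y$), and the paper notes that the $\EE^0$ integral in \eqref{xiformula2} is in fact entire rather than merely meromorphic with possible poles along $s_1+s_2\in\{0,1\}$, since $\EE^0(z,s)$ is entire in $s$.
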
 

\begin{proof} Let $\mathrm{Re}(s_1), \mathrm{Re}(s_2)>\!\!>0$. 
From equation \eqref{lambdain2vars} we find
$$\xi(2s_1,2s_2) = \int_{0\leq t_1\leq t_2 \leq \infty} \theta_{\Q}(t_1) t_1^{2s_1}  \theta^0_{\Q}(t_2) t_2^{2s_2}\, \frac{dt_1}{t_1} \frac{dt_2}{t_2}   - \frac{1}{2s_2} \int_0^{\infty} \theta^0_{\Q}(t_1) t_1^{2s_1+2s_2} \frac{dt_1}{t_1} \ . $$
The second integral is simply $\xi(2s_1+2s_2)$. Therefore
\begin{equation}  \label{inproofxidouble} \xi(2s_1,2s_2) + \frac{1}{2s_2} \xi(2s_1+2s_2)   =  
  \sum_{m\in \Z, n\in \Z \backslash 0} \int_{0\leq t_1\leq t_2 \leq \infty} e^{-\pi (m^2 t_1^2 + n^2 t_2^2)}  t_1^{2s_1} t_2^{2s_2}   \frac{dt_1}{t_1} \frac{dt_2}{t_2} \ .
 \end{equation}
 Change variables by setting $t_1=\lambda , t_2= y \lambda$.  The integral becomes
 $$I_{m,n}= \int_{1}^{\infty} dy \int_0^{\infty}  e^{-\pi (m^2  + n^2 y^2) \lambda^2}  \lambda^{2s_1+ 2s_2}   \frac{d\lambda}{\lambda}    \, y^{2s_2} \frac{dy}{y}  \ . $$
 Perform the $\lambda$ integration using  the following formula
  $$\int_0^{\infty}  e^{- \pi \phi \lambda^2} \lambda^{2s} \frac{d\lambda}{\lambda} = \frac{ 1  }{2 \,\phi^{s}}\, \Gamma_{\R}(2s) \ , $$ 
 which  holds for any $\phi>0$.   This gives 
$$I_{m,n} = \Gamma_{\R}(2s)   \int_{1}^{\infty}   \frac{1}{2}   \left(\frac{y}{m^2+ n^2 y^2} \right)^{s_1+s_2} y^{s_2-s_1}  \frac{dy}{y}  \ .$$
Writing $y  = \mathrm{Im}(z)$ for $z$ on the imaginary axis,  and invoking 
$$m^2+n^2 y^2 = |m+n z|^2$$
we deduce that the right-hand side of \eqref{inproofxidouble} is
$$\Gamma_{\R}(2s) \int_i^{i \infty}   \left( \frac{1}{2}    \sum_{m\in \Z, n\in \Z \backslash 0} \frac{y^{s_1+s_2}}{|m+nz|^{2s_1+2s_2} } \right) y^{s_2-s_1} \frac{dz}{z}\ .$$
It follows from the definitions that
$$\EE(z,s) -\xi(2s)y^s = \Gamma_{\R}(2s)\,  \left(\frac{1}{2}    \sum_{m\in \Z, n\in \Z \backslash 0} \frac{y^{s}}{|m+nz|^{2s} }\right)\ .$$
The left-hand side equals $\EE^0(z,s) + \xi(2s-1) y^{1-s}$. We conclude that
\begin{multline} \xi(2s_1,2s_2) + \frac{1}{2s_2} \xi(2s_1+2s_2)   =  
  \int_i^{i \infty} \left( \EE^0(z,s_1+s_2) +   \xi(2s_1+2s_2-1) y^{1-s_1-s_2} \right) y^{s_2-s_1} \frac{dz}{z}  \\
  =   - \frac{ 1}{1-2s_1}\xi(2s_1+2s_2-1)  +  \int_i^{i \infty}  \EE^0(z,s_1+s_2)   y^{s_2-s_1} \frac{dz}{z}   \end{multline}
 The two terms on the right of \eqref{xiformula2} account for  the poles of $\xi(2s_1,2s_2)$ which lie on $s_2=0, s_1=1$, $s_1+s_2\in \{0, 2\}$. The  apparent singularity at $2s_1+2s_2=1$ cancels out.  It follows that the integral of $\EE^0$ in  \eqref{xiformula2} has no poles. 
\end{proof} 

\subsection{Functional equation} It is instructive to retrieve the   functional equation and shuffle 
product formula for the double $\xi$-function from the previous theorem.
The functional equation
 \eqref{FneqnforRealEis} for  the real analytic Eisenstein series implies that
 $$  \int_{i}^{\tone_{\infty}}  \EE(z, s_1+s_2)  y^{s_2- s_1} \frac{dz}{z} \  =  \  \int_{i}^{\tone_{\infty}}  \EE(z, 1-s_1-s_2)  y^{s_2- s_1} \frac{dz}{z}$$
 which is equivalent, by 
 \eqref{xiformula}, to
$$\xi(2s_1,2s_2)  \ =  \ \xi(1-2s_2,1-2s_1) \ .$$
Thus the functional equation for the double Riemann $\xi$-function follows formally from the functional equation of the real analytic Eisenstein series.

\subsection{Shuffle product}
Similarly, let us compute, using  \eqref{xiformula}, the expression 
$$\xi(2s_1,2s_2) + \xi(2s_2,2s_1) =  \int_{i}^{\tone_{\infty}}  \EE(z, s_1+s_2)  \left(  y^{s_2- s_1}+y^{s_1- s_2}\right) \frac{dz}{z}$$
for large $\mathrm{Re}(s_1), \mathrm{Re}(s_2)$. Since the Eisenstein series is invariant under the involution $S: z\mapsto -  \frac{1}{z}$, the right-hand side can be unfolded and rewritten in the form: 
$$\int_{S\tone_{\infty}}^{\tone_{\infty}}   \EE(z, s_1+s_2)    y^{s_2- s_1} \frac{dz}{z} \ .$$
If one prefers, one can take the lower bound of integration to be $0$ if one assumes that 
$\mathrm{Re}(s_2)>\mathrm{Re}(s_1)$.   The Eisenstein series is itself a Mellin transform
$$2\,  \EE(z, s) = \int_0^{\infty} \left(\Theta_z( t)-1\right) t^{s} \frac{dt}{t}  = \int_0^{\tone_{\infty}} \Theta_z( t)  t^{s} \frac{dt}{t}$$
for $\mathrm{Re}(s)$ large, 
where 
$$\Theta_z(t) = \sum_{m,n \in \Z} e^{-\pi  \frac{|m+n z|^2}{\mathrm{Im}(z)} t} \ .$$
On the imaginary axis $z=it_1$, this theta function factorises:
$$\Theta_{it_1} (t_2) =  \sum_{m,n \in \Z} e^{-\pi  (m^2 t^{-1}_1+  n^2t_1) t_2} = \theta(t_1t_2) \theta(t_1^{-1} t_2)$$ 
where $\theta(x) = \theta_{\Q}(\sqrt{x})$. 
Substituting into the formula above gives 
$$ \xi(2s_1,2s_2) + \xi(2s_2,2s_1) =  \frac{1}{2} \int_{0}^{\tone_{\infty}} \int_{0}^{\tone_{\infty}} \theta( t_1t_2) \theta ( t_1^{-1}t_2)  \,   t_2^{s_1+s_2} t_1^{s_2-s_1} \,  \frac{dt_1}{t_1}\frac{dt_2}{t_2} \ , $$
Change variables  $u=t_1t_2$, $v=t_1^{-1} t_2$ to obtain 
$$    \xi(2s_1,2s_2) + \xi(2s_2,2s_1)  \ = \ \frac{1}{4} \int_{0}^{\tone_{\infty}} \int_{0}^{\tone_{\infty}} \theta(u) \theta(v)  \,   u^{s_2} v^{s_1} \,  \frac{du}{u}\frac{dv}{v}  \ . $$
The right hand side is a product of integrals (after rescaling $u=t^2$, etc) 
$$\int_0^{\tone_{\infty}} \theta_{\Q}(t) t^{2s_1} \frac{dt}{t}   \int_0^{\tone_{\infty}} \theta_{\Q}(t) t^{2s_2} \frac{dt}{t}    \  =  \  \xi(2s_1) \xi(2s_2) \ .$$
We conclude that the shuffle product formula 
$$\xi(2s_1,2s_2) + \xi(2s_2,2s_1) \ = \ \xi(2s_1) \xi(2s_2) $$
is a consequence of the modular invariance (or rather,   invariance with respect to  $z\mapsto - z^{-1}$) of the real analytic Eisenstein series and the factorisation of its associated theta function. 

\subsection{Totally even double $\xi$-values}
The critical values of the  zeta function were computed by Euler.  The next simplest  multiple $\xi$-values  should be the totally even double $\xi$-values.  

\begin{thm} For any integers $\ell_1,\ell_2\geq 1$,  the double $\xi$-values $\xi( 2\ell_1 , 2\ell_2)$ are linear combinations of  regularised Eichler integrals from the point $\tau = i$ to infinity: 
$$
\pi^{-\ell_1-\ell_2} \,\xi(2\ell_1, 2 \ell_2) =   \int_{i}^{\tone_{\infty}}  P_{\ell_1,\ell_2}(\tau) \,  \G_{2 \ell} (\tau)    d\tau 
$$
where $\ell = \ell_1+\ell_2$ and 
$$P_{\ell_1,\ell_2} (\tau)  \quad  \in \quad  \Q \,  i \tau^{2\ell_1-1}   + \quad  \sum_{k=0}^{ 2\ell -2 }\Q  \, \tau^{2k}\ .$$
 \end{thm}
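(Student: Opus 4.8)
The plan is to specialise the Mellin-transform formula \eqref{xiformula} for the double $\xi$-function and then convert the resulting non-holomorphic integrand into a holomorphic Eichler integral. Setting $s_1=\ell_1$, $s_2=\ell_2$ and writing $\ell=\ell_1+\ell_2$ gives at once
$$\xi(2\ell_1,2\ell_2)=\int_i^{\tone_\infty}\EE(z,\ell)\,y^{\ell_2-\ell_1}\,\frac{dz}{z},$$
with the tangential-base-point regularisation of \S\ref{sectWithPoles} built in. The whole point is that the spectral parameter $s=\ell$ is now a positive integer, so the completed real-analytic Eisenstein series degenerates.

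The key input is the Fourier expansion of $\EE(z,\ell)$. Its constant term is $\EE^{\infty}(z,\ell)=\xi(2\ell)y^\ell+\xi(2\ell-1)y^{1-\ell}$, and its $n$-th coefficient ($n\neq 0$) involves $K_{\ell-1/2}(2\pi|n|y)$, which for half-integral index is elementary. Hence $\EE^{0}(z,\ell)$ is a finite $\Q[\pi^{-1}]$-linear combination of terms $y^{-j}g_j(z)$ and $y^{-j}\overline{g_j(z)}$ for $0\le j\le \ell-1$, where $g_j$ is the holomorphic $q$-series with $n$-th coefficient $\sigma_{2\ell-1}(n)\,n^{1-\ell-j}$, i.e. an $(\ell-1+j)$-fold indefinite integral of $\G_{2\ell}-c_0$, with $c_0$ the (nonzero) constant term of $\G_{2\ell}$. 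On the imaginary axis $z=iy$ the antiholomorphic pieces are simply the complex conjugates of the holomorphic ones.

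Substituting this into the integral and integrating term by term against $y^{\ell_2-\ell_1-1}\,dy$, I would integrate by parts repeatedly: each factor $y^{-j}$ paired with the corresponding iterated integral collapses, after finitely many integrations by parts, to a single integral $\int_i^{\tone_\infty}P(\tau)\,\G_{2\ell}(\tau)\,d\tau$ of $\G_{2\ell}$ against a polynomial in $\tau$, plus boundary contributions at $\tau=i$ and at $\infty$; the latter are precisely what the tangential base point $\tone_{\infty}$ absorbs, which is exactly why one needs it here since $\G_{2\ell}$ has nonzero constant term. A degree count bounds the resulting polynomial by $2\ell-2$, and the odd zeta value $\xi(2\ell-1)$ appearing in $\EE^{\infty}$ and in the $j\ge 1$ Bessel corrections is carried by the regularised Eichler integral, exactly as it enters the classical period polynomial of $\G_{2\ell}$. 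The remaining powers of $\pi$ collect into the overall factor $\pi^{-\ell_1-\ell_2}$.

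The main obstacle is pinning down the precise shape $P_{\ell_1,\ell_2}(\tau)\in\Q\,i\tau^{2\ell_1-1}+\sum_{k=0}^{2\ell-2}\Q\,\tau^{2k}$, that is, showing that the coefficients of all odd powers of $\tau$ except the $(2\ell_1-1)$-st vanish and that the surviving even coefficients are genuinely rational rather than merely complex. For this I would combine two parity inputs: the reality of $\xi(2\ell_1,2\ell_2)$ played against the complex-conjugate pairing of the holomorphic and antiholomorphic parts above, which already kills half of the potential contributions; and the invariance of $\EE(z,s)$ under $S\colon z\mapsto -1/z$, whose fixed point is exactly the lower endpoint $\tau=i$ of the integral. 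The $S$-symmetry lets one fold the integral and run the usual period-polynomial formalism for $\G_{2\ell}$; the defect in the associated cocycle relation is localised at $\tau=i$ and, weighted by $y^{\ell_2-\ell_1}$, produces precisely the single extra monomial $i\tau^{2\ell_1-1}$. A more computational alternative avoiding the real-analytic Eisenstein series reaches the same conclusion: use $\xi(2\ell_1,2\ell_2)=D(2\ell_1,2\ell_2)+(\tfrac1{2\ell_1}-\tfrac1{2\ell_2})\xi(2\ell)$ and the expansion of $D(2\ell_1,2\ell_2)$ in multiple quadratic sums $Q(a,b)$, then partial-fraction $1/(m^2+n^2)^a$ over $m\pm in$ and apply the Lipschitz summation formula in $m$; every resulting sum $\sum_{N}\sigma_{2\ell-1}(N)\,N^{p-2\ell}e^{-2\pi N}$ is an Eichler integral of $\G_{2\ell}$ evaluated at $\tau=i$, and the same parity bookkeeping isolates $P_{\ell_1,\ell_2}$.
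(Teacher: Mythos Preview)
Your approach is genuinely different from the paper's. You both start from \eqref{xiformula} at $s_i=\ell_i$, but then diverge. You propose the explicit Fourier expansion of $\EE(z,\ell)$ via the elementary form of $K_{\ell-1/2}$ followed by repeated integration by parts. The paper instead rewrites $\EE(iy,\ell)\,y^{\ell_2-\ell_1-1}$ as a rational multiple of $\pi^{\bullet}\,y^{2\ell_2-2}\,\mathcal{E}_{\ell-1,\ell-1}(iy)$, where the $\mathcal{E}_{a,b}$ are the weight-$(a,b)$ components of the real-analytic Eisenstein series, and then proves a linear-algebra lemma: for $m$ odd (here $m=2\ell_2-1$), the central basis vector lies in the image of the singular tridiagonal matrix $M_w$ encoding $2\,\partial/\partial y$ on $\{y^m\mathcal{E}_{a,b}(iy)\}_{a+b=w}$. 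This is established by a block decomposition $M_w=\mathrm{diag}(R_w,0)+\mathrm{diag}(0,\widetilde{R_w})$ with $\det R_w=-2^{w/2}s(2-s)\cdots(w-s)\neq 0$ for odd $s$. The outcome is an exact primitive of the integrand modulo a single term $\lambda\pi\,y^{m}\G_{2\ell}(iy)$, which is what forces exactly one odd power of $\tau$; the boundary values $\mathcal{E}_{a,b}(i)$ are then themselves regularised Eichler integrals with \emph{even} powers, via the identity $d\big(\sum_{r+s=w}\mathcal{E}_{r,s}(X-\tau Y)^r(X-\overline{\tau}Y)^s\big)=\mathrm{Re}\big(2\pi i\,\G_{w+2}(X-\tau Y)^w\,d\tau\big)$.

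Your Fourier/Bessel route can certainly reach an Eichler-integral expression, but the step you yourself flag as the ``main obstacle'' is where the argument is incomplete. The parity and $S$-invariance considerations you sketch do not, as stated, single out the lone odd monomial $i\tau^{2\ell_1-1}$: repeated integration by parts against the polynomial-times-exponential kernel produces boundary contributions at many odd degrees, and folding via $S$ only yields relations of the form $(-1)^k\!\int_i^{\tone_\infty}\tau^k\G_{2\ell}\,d\tau+\int_i^{\tone_\infty}\tau^{2\ell-2-k}\G_{2\ell}\,d\tau=\int_0^{\tone_\infty}\tau^k\G_{2\ell}\,d\tau$ among the Eichler integrals themselves, not a cancellation of coefficients inside $P_{\ell_1,\ell_2}$. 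The paper's raising/lowering-operator computation and the invertibility of $R_w$ is precisely the structural input that collapses all of this to a single odd power plus even powers; your sketch would need an equivalent mechanism made explicit.
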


\begin{proof} We  only give the main steps. Starting from \eqref{xiformula}, one can write the real analytic Eisenstein series $\EE(z,\ell)$ in terms of  the function $\mathcal{E}_{\ell-1 , \ell-1}(z)$ where
$$\mathcal{E}_{a,b}(z) =  \frac{w!}{(2\pi i)^{w+1}} \frac{1}{2} \sum_{(m,n)\neq (0,0)} 
\frac{ i \,\mathrm{Im} (z) } { (mz+n)^{a+1} (m \overline{z}+n)^{b+1}}  $$
and   $w=a+b$ is  even $\geq 2$, and $a,b \geq 0$.   These satisfy the differential equations  
$$2 \frac{\partial}{\partial y} \Big( y^s \mathcal{E}_{a,b}(iy)\Big)= y^{s-1}\Big( (a+1)\, \mathcal{E}_{a+1,b-1}(iy)+  (2s-w) \mathcal{E}_{a,b}(iy) + (b+1)\, \mathcal{E}_{a-1,b+1}(iy) \Big)$$
for $a,b\geq 0$ (see \cite{CNHMF}, \S4), where we set $$\mathcal{E}_{w+1,-1}(iy)=\mathcal{E}_{-1,w+1}(iy) = - \frac{2 \pi y}{w+1} \G_{w+2}(iy)\ .$$ 
  We shall show  that  for every $m$ odd, there exist rational numbers $\lambda_{a,b}, \lambda$ such that
\begin{equation}\label{lambdaclaim} 
\frac{\partial}{\partial y}\left( \sum_{a+b=2\ell-2}  y^{m} \lambda_{a,b} \mathcal{E}_{a,b}(iy )\right)  =  y^{m-1} \mathcal{E}_{\ell-1, \ell-1}(iy)  +  \lambda \pi \, y^{m} \, \G_{2 \ell}(iy)   \ .
\end{equation} 
  For this, it suffices to show that the vector $v=(0,\ldots, 0, 1, 0,\ldots, 0)$, with $w/2$ zeros either side of $1$, is in the image of the following matrix for $s=m$ and $w=2\ell-2$: 
$$M_w= \begin{pmatrix}
2s-w & 1 &     & \\
w & 2s-w & 2  &  \\
& w-1 & 2s-w & 3 &  \\
 &&  \ddots &  \ddots \\
& & & 2 & 2s-w & w \\
& & & & 1 & 2s-w 
\end{pmatrix}
$$
It   encodes the action of the operator $2 \partial/\partial y$ on the $\{y^s \mathcal{E}_{a,b}(iy)\}$ modulo $\G_{w+2}$. 
Its determinant is $\det (M_w) = 2^{w+1} \, s(s-1) \ldots (s-w)$, so it is unfortunately singular for $s=m$ and $0\leq m \leq w$.
Nonetheless,   it can be written in the form 
$$M_w = \begin{pmatrix}  R_w  & 0 \\ 0 & 0 \end{pmatrix}+\begin{pmatrix}  0  & 0 \\ 0 & \widetilde{R_w} \end{pmatrix}$$
where $R_w$ is a square matrix with  $w/2+1$  rows and columns and 
 $\widetilde{R_w}$ is the matrix $R_w$ rotated through 180 degrees. For example, when $w=4$ we have
$$R_4 =  \begin{pmatrix}
2s-4 & 1 &       \\
4 & 2s-4 & 2    \\
  & 3 & s-2    \\ 
\end{pmatrix} \quad , \quad  
\widetilde{R_4} =  \begin{pmatrix}
s-2 & 3 &       \\
2 & 2s-4 & 4    \\
  & 1 & 2s-4    \\ 
\end{pmatrix}\  . 
$$
Note that the bottom-right entry of $R_w$ is half the other diagonal entries, since it contributes twice to $M_w$. 
One easily checks that the matrices $R_w$ have determinant 
$$\det(R_w) = - 2^{w/2} s(2-s)(4-s)  \ldots (w-s)$$
which is non-zero if $s=m$ is odd. Therefore, for odd $s=m$, the vector $(0,\ldots, 0, 1)$ is in the image of $R_w$, from which it follows that $v$ is in the image of $M_w$.  This proves the claim \eqref{lambdaclaim}.
Substituting \eqref{lambdaclaim} into the integral  \eqref{xiformula} leads to an expression for $\xi(2\ell_1, 2\ell_2)$ as an Eichler integral with an odd power of $\tau$, together with the values of $\mathcal{E}_{a,b}(i)$. Since  
$$d \,\left( \sum_{r+s=w} \mathcal{E}_{r,s}(\tau)(X- \tau Y)^r (X- \overline{\tau} Y)^s \right) = \mathrm{Re}  \left( 2 \pi i \,  \G_{w+2}(\tau) (X - \tau Y)^{w} d\tau \right)  $$ 
the values $\mathcal{E}_{a,b}(i)$ can all be expressed as regularised Eichler integrals from $i$ to infinity of $\G_{w+2} (\tau) \tau^{k}$ where $0\leq k \leq w$ is an even integer. 
\end{proof}

\begin{rem}In particular,   $\pi^{\ell_1+\ell_2} \xi( 2\ell_1 , 2\ell_2)$ are periods of $\pi_1^{\rel} ( \mathcal{M}_{1,1}; i ,   \tone_{\infty})$ , 
the torsor of paths on the moduli stack of elliptic curves $\mathcal{M}_{1,1}$ \cite{HaGPS, MMV}.   In fact,  the regularised Eichler integrals of $\G_{2\ell }$ from $i$ to $i \infty$  are periods of an extension   
$$0 \To  \left(\mathrm{Sym}^{2\ell-2} H^1(E_i)^{\vee} \right) (1) \oplus \Q( 2\ell-1)  \To \mathcal{E} \To \Q \To 0$$
of mixed Hodge structures, 
where $E_i $ is the CM elliptic curve  $ \C/ \Z \oplus  i \Z$.

Using the modularity of $\G_{2\ell}$ under   inversion $\tau \mapsto -1/\tau$, one has
$$(-1)^k \int_i^{\tone_{\infty}}  \tau^{k} \G_{ 2\ell}(\tau) d\tau+  \int_i^{\tone_{\infty}}  \tau^{2\ell-2- k} \G_{ 2\ell}(\tau) d\tau = \int_0^{\tone_{\infty}}  \tau^k \G_{ 2 \ell} (\tau )  d\tau  $$
where $0 \leq k \leq 2\ell-2$. 
The integrals on the right-hand side are known explicitly (\cite{MMV} \S7, \cite{BrIhara}) and are  critical values of $\Lambda(\G_{2\ell};s)$. 
Since most  of the latter  vanish or are rational, this implies relations between the regularised Eichler integrals from $i$ to $i \infty$. 
Furthermore, the Eichler integrals
$$\int_i^{\tone_{\infty}}  \tau^{k} \G_{ 2\ell}(\tau) d\tau$$
for \emph{even} $0\leq k\leq 2\ell-2$ are values of $L$-functions of Hecke Grossencharacters, via their interpretation as values of the real analytic Eisenstein series $\mathcal{E}_{a,b}$ at the CM point $i$.  
\end{rem} 

\begin{ex}  Following the proof of the theorem yields explicit expressions:
\begin{eqnarray}
 \xi(2,2)  &= &  - 8 \, \pi^2   \int_1^{\tone_{\infty}} y\, \G_{4}(iy) dy   = \frac{\pi^2}{72}   \nonumber \\
 \xi(2,4)  &= &  \frac{4\, \pi^3}{3}  \int_1^{\tone_{\infty}} (1+3\, y^2 -4\, y^3)\, \G_{6}(iy) dy  \nonumber \\
  \xi(4,2)  & = &  -  \frac{4\, \pi^3}{3}  \int_1^{\tone_{\infty}} (1- 4\,y + 3\,  y^2)\, \G_{6}(iy) dy  \nonumber \\
  \xi(6,2)  &=  & \frac{8\, \pi^4}{15}  \int_1^{\tone_{\infty}} (1 -4\,  y+5\,y^2) \,\G_{8}(iy) dy  \nonumber 
\end{eqnarray} 
\end{ex}

\begin{rem} Erik Panzer kindly sent  me an independent evaluation of $Q(2,2)$ as a regularised Eichler integral of $\G_8$ by clever application of the Lipschitz summation formula. \end{rem} 

\section{Further comments} \label{sectMThetaV}

\subsection{Multiple Jacobi theta values} The theta function $\theta_{\Q}$ associated to the trivial motive $\Q$ generates a large space  of functions under the operations $(\mathbf{T}1)-(\mathbf{T}4)$ if we also allow  multiplication (remark \ref{RemMult}).  As we have tried to argue, the associated multiple $L$-values  contain some numbers of potential arithmetic interest. 

In order to make this framework more manageable, it is natural to restrict to the graded algebra  $\Theta_J$ generated only by the Jacobi theta-null functions
\begin{equation} \label{thetanull} 
\theta_2(\tau)  =  \sum_{n\in \Z} q^{(n+\frac{1}{2})^2}  \quad , \quad 
\theta_3(\tau)  =  \sum_{n\in \Z} q^{n^2}  \quad , \quad  \theta_4(\tau)  =   \sum_{n\in \Z} (-1)^n q^{n^2}   , 
\end{equation} 
where $q= e^{i \pi \tau}$. They are not  self-dual, and can  have half-integral weights. Their Mellin transforms can have poles at half-integers, so we should allow $\theta^{\infty} \in \C[\sqrt{t}]$ in \S\ref{sectIteratedMellin} (2). 

Let us call a \emph{multiple Jacobi theta value} a totally critical value of 
$$\ML(\theta_1,\ldots, \theta_r; s_1,\ldots, s_r) \qquad \hbox{ where } \quad  \theta_i \in \Theta_J\  .$$
The structure of these numbers should prove to be  interesting, given the intricate algebraic and differential relations which theta functions  satisfy \cite{ZudilinTheta}.
\begin{enumerate} 
\item  By  \S\ref{sectMTZ} and \eqref{thetaplusminus}, multiple zeta values are examples of  multiple Jacobi theta values.
\vspace{0.1in}
\item Since    $\theta_{\Q}(\sqrt{t}) = \theta_3( i t)$, and $t \mapsto \sqrt{t}$ corresponds to $s\mapsto 2s$, 
 the totally even Riemann $\xi$-values are also multiple Jacobi theta values. 
\vspace{0.1in}
\item Since iterated integrals are invariant under reparametrisation (note that  this can affect tangential base points), and since the Eisenstein series $\G_4(2\tau)$, $\G_6(2\tau)$ are  in $\Theta_J$ (see \cite{ZudilinTheta}) and generate the full  ring of modular forms of level one, we deduce that all  totally holomorphic multiple modular values  for $\mathrm{SL}_2(\Z)$ \cite{BrIhara} are  multiple Jacobi theta values.  
\vspace{0.1in}

\item  Multiple Jacobi values of length one   are   related to  `lattice sums' which arise in a variety of contexts (see \cite{Lattice} for interesting examples and references) and to  values of the Arakelov double zeta function  $Z_{\Q}(w,s)$ of \cite{ArakelovZeta}.
\end{enumerate}

Thus all the numbers discussed in  the first half of this talk \cite{BrIhara} are subsumed  into this class. 
Note that multiple zeta values arise in two completely different ways: via $(1)$, but also via $(2)$ as   iterated integrals of Eisenstein series by Saad's theorem \cite{BrIhara}.

\subsection{$L$-functions of non-holomorphic modular forms}
In \cite{CNHMF} we  defined non-holomorphic modular forms  by taking real and imaginary parts of iterated primitives of classical holomorphic modular forms. 
The prototypical examples are the real-analytic Eisenstein series $\mathcal{E}_{a,b}(s)$ considered in \S\ref{sectMultipleXi}.  These functions, via a regularised Mellin transform, also give rise to $L$-functions with good properties   \cite{CNHMF}, \S9.4.   One can show that they are linear combinations of  multiple $\ML$-functions
$$\ML(f_1,\ldots, f_{r-1}, f_r; p_1,\ldots, p_{r-1}, s)$$
where $f_1,\ldots, f_r$ are modular forms of full level, and $p_1,\ldots, p_{r-1}$ are fixed integers which are critical for each $f_i$,  and only the last parameter is allowed to vary.   This fact was one of our motivations for the present work, but will be discussed elsewhere.

\subsection{Conclusion.} We have defined a family of multiple motivic $\ML$-functions with good properties and exhibited  examples where their totally critical values are related to periods. For motives of higher rank $>2$,  or in the case of  several  motives which have different  gamma factors, there is currently insufficient evidence  to know if the definition needs modifying in some way.  Hilbert modular forms, for example,  have multi-variable Mellin transforms which define $L$-functions in several variables. It is not clear how these should relate to the objects defined here. In any case, the present definition \S\ref{sectIteratedMellin}, applied to motives of low rank,  leads to new objects, such as the multiple Riemann $\xi$-function, which are potentially of interest.

 \vspace{0.1in}
 
 \emph{Acknowledgements}. This paper was written at All Souls college, Oxford and was partially supported by ERC grant  724638. Many thanks to Erik Panzer for discussions and enthusiasm, and M. Kaneko for corrections.
   \\

\noindent  Francis Brown, \\
All Souls College, \\
Oxford, \\
OX1 4AL,\\
United Kingdom.  \\
\texttt{francis.brown@all-souls.ox.ac.uk}

\bibliographystyle{plain}
\bibliography{main}

\end{document}